\documentclass[11pt]{amsart}
\usepackage{amsmath, amssymb, amscd, mathrsfs, slashed, url,color,extsizes,xcolor,multicol,indentfirst,latexsym,bm,graphicx,subfigure,esint,float,verbatim,comment,epsfig,dsfont,amsthm,amsfonts,amsbsy,tikz-cd,amsthm,thmtools,bbm,adjustbox,enumerate,calligra,fancyhdr,accents,enumitem,etoolbox}

\usepackage{cancel}

\usepackage[all,cmtip]{xy}
\usepackage[top=1in, bottom=1in, left=1.25in, right=1.25in]{geometry}
\usepackage[colorlinks,hypertexnames=false]{hyperref}
\usepackage[toc,page]{appendix}

\DeclareMathAlphabet{\mathcalligra}{T1}{calligra}{m}{n}

\declaretheoremstyle[
headfont=\color{blue}\normalfont\bfseries,
bodyfont=\color{blue}\normalfont\itshape,
]{colored}

\usepackage[utf8]{inputenc}
\usepackage[english]{babel}

\setlist[enumerate]{
    label={(\roman*)},
}

\AtBeginEnvironment{definition}{%
    \setlist[enumerate]{
        label={(\roman*)}, 
        font=\normalfont,
        before=\normalfont
    }%
}

\newtheorem{theorem}{Theorem}[section]

\newtheorem{corollary}[theorem]{Corollary}

\newtheorem{definition}[theorem]{Definition}
\newtheorem{question}[theorem]{Question}

\newtheorem{lemma}[theorem]{Lemma}

\newtheorem{proposition}[theorem]{Proposition}
\newtheorem{remark}[theorem]{Remark}

\theoremstyle{definition}

\newcommand{\RR}{\mathbb{R}}
\newcommand{\NN}{\mathbb{N}}
\newcommand{\ZZ}{\mathbb{Z}}
\newcommand{\Sig}{\Sigma}

\newcommand{\CC}{\mathbb{C}}

\renewcommand{\d}{\operatorname{d}}

\newcommand{\ZT}{\ZZ/2}

\newcommand{\I}{\mathcal{I}}

\newcommand{\Flux}{\operatorname{Flux}}

\usepackage{graphicx} 

\title{Prescribed Singular Sets for $\ZT$-Harmonic $1$-Forms on $\RR^n$}
\author{}

\begin{document}

\author{Jiahuang Chen, Siqi He, Willem Adriaan Salm} 

\address{AMSS}
\email{chenjiahuang@amss.ac.cn, sqhe@amss.ac.cn}	
\address{Oxford University}
\email{andries.salm@maths.ox.ac.uk}

\hyphenation{inver-ti-bi-li-ty}
\maketitle
\begin{abstract}
	$\mathbb{Z}/2$ harmonic $1$-forms arise naturally as singular limits in gauge theory and calibrated geometry, 
	but the topology that can occur in their singular sets is poorly understood. 
	In this paper we study the flexibility of these singular sets when the ambient Riemannian metric is allowed to vary. 
	We prove that every compact smoothly embedded codimension-two submanifold with trivial normal bundle in 
	a Euclidean space of dimension at least three can be realized as the singular set of a 
	nondegenerate $\mathbb{Z}/2$ harmonic $1$-form for a complete metric that is Euclidean outside a compact set.
	As an application, we use Calabi surgery to construct $\mathbb Z/2$-harmonic $1$-forms with prescribed local singular sets on 
	closed manifolds of positive first Betti number.
\end{abstract}

\section{Introduction}

$\mathbb Z/2$-harmonic $1$-forms are singular elliptic objects that arise
naturally as limits of solutions to several nonlinear geometric equations.
They were introduced by Taubes in his compactness theory for
$\mathrm{PSL}_2(\mathbb C)$ connections \cite{Tau13,Tau14}. Closely related
$\mathbb Z/2$-harmonic forms and spinors also appear in compactness problems
for generalized Seiberg--Witten equations and other gauge-theoretic systems
\cite{HW15,WZ21}. Such a form comes equipped with a singular set, which is
part of the geometric data. In this paper, we focus on the case where the
singular set is a smooth codimension-two submanifold.

Although the equation defining $\ZT$-harmonic $1$-forms is linear once the singular set is fixed,
the deformation problem is intrinsically nonlinear because the singular set
itself is allowed to move. Donaldson developed a deformation framework for
$\ZT$-harmonic $1$-forms with smooth singular sets
\cite{Donaldson2021}; see also Parker \cite{Parker2023Deformations} for the
corresponding theory for spinors. For related analytic foundations and
developments in the deformation and gluing theories of $\mathbb Z/2$-harmonic
spinors and $1$-forms, see
\cite{Takahashi2015Thesis, HeParker2024, WalpuskiGora2026,HeParkerWalpuski2026}.

Let $(M,g)$ be an oriented Riemannian manifold, and let $\Sigma\subset M$ be a
closed, cooriented, smoothly embedded codimension-two submanifold. Let $\I\to M\setminus\Sigma$ be a flat real line bundle with monodromy $-1$ around every small normal meridian of $\Sigma$.
For $v \in \Gamma(T^*M \otimes \mathcal{I})$, we say that $(v,\Sigma,\I)$ is an $L^2$-bounded $\ZT$-harmonic 1-form if $v$ is harmonic and locally $L^2$-bounded near $\Sigma$.

Near the branching set $\Sigma$, $v=\d u$ for some $\I$-valued harmonic function $u$. Donaldson \cite{Donaldson2021} showed that, after choosing a suitable complex normal coordinate $z$ near $\Sigma$, $u$ has an expansion of the form
\begin{equation}
	\label{eq:Donaldsonexpansion}
	u
	=
	\operatorname{Re}\!\left(A(u)z^{1/2}+B(u)z^{3/2}\right)
	-\frac12
	\operatorname{Re}\!\left(A(u)z^{1/2}\right)
	\operatorname{Re}(\bar{\mu}z)
	+O(|z|^{5/2}).
\end{equation}
Here $\mu$ denotes the mean-curvature vector of $\Sigma$ in $(M,g)$,
expressed in the chosen complex normal coordinate.

\begin{definition}
	\label{def:nondegenerate-local-model}
	Let $(v,\Sigma,\I)$ be an $L^2$-bounded $\ZT$-harmonic $1$-form whose local potential
	$u$ has the expansion \eqref{eq:Donaldsonexpansion}. We say that
	$(v,\Sigma,\I)$ is $\ZT$-harmonic if $A(u)\equiv 0$. It is
	\textbf{nondegenerate} if, in addition, $B(u)$ is nowhere vanishing along
	$\Sigma$.
\end{definition}

On a closed manifold $M$, the topology of the ambient manifold can impose nontrivial restrictions on the singular sets of $L^2$-bounded $\ZT$-harmonic $1$-forms \cite{Hay22}. This is due to a Hodge theory for $L^2$-bounded $\ZT$-harmonic 1-forms \cite{HunsickerMazzeo2005}. However, there is no Hodge theory for $\ZT$-harmonic 1-forms as the governing differential equation has an infinite dimensional co-kernel. We ask therefore whether the stronger analytical restrictions in
Definition~\ref{def:nondegenerate-local-model} still impose topological restrictions on the smooth singular set.

\begin{question}
	\label{question:topology}
	Let $(v,\Sigma,\I)$ be a nonzero $\ZT$-harmonic $1$-form on $(M,g)$. To what
	extent does the topology of $M$ constrain the topology of $\Sigma$?
\end{question}

Our first result shows that once compactly supported
changes of the ambient metric are allowed, any closed codimension-two submanifold with trivial normal bundle can be used as the branching set of a nondegenerate $\ZT$-harmonic function on $\RR^n$.
Inspired by Salm's long-neck construction \cite{salm2024construction}, we
establish this flexibility while keeping the underlying smooth manifold fixed
and leaving the Euclidean geometry near infinity unchanged. The following
theorem makes this statement precise.

\begin{theorem}
	\label{theorem:euclidean-realization}
	Let $n\geq 3$, and let $\Sigma\subset\mathbb R^n$ be a compact smoothly
	embedded codimension-two submanifold with trivial normal bundle. There
	exist a smooth complete Riemannian metric $g$ on $\mathbb R^n$ that is
	Euclidean outside a compact set, a flat real line bundle
	$\I\to\mathbb R^n\setminus\Sigma$ with monodromy $-1$ around every normal
	meridian of $\Sigma$, and a nonzero harmonic section $u$ of $\I$ such that $du$ is a
	nondegenerate $\ZT$-harmonic $1$-form with singular set $\Sigma$.
\end{theorem}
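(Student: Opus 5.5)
We will build the metric in three regions: a standard product model near $\Sigma$, a long neck around that model, and the Euclidean region outside a compact set. The construction follows the long-neck idea of \cite{salm2024construction}.

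\textbf{Local model near $\Sigma$.} Since the normal bundle is trivial, a tubular neighbourhood of $\Sigma$ is diffeomorphic to $\Sigma\times D^2$. Near $\Sigma$ we choose the product metric $g_\Sigma+|dz|^2$, where $g_\Sigma$ is any metric on $\Sigma$ and $z$ is the complex coordinate on $D^2$. Take $\I$ to be the flat line bundle with monodromy $-1$ around the disc factor; it extends over $\RR^n\setminus\Sigma$ because each component of $\Sigma$ has a meridian that is nontrivial in $H_1(\RR^n\setminus\Sigma;\ZZ/2)$ (Alexander duality). On the product region the function
\[
u_0=\operatorname{Re}(z^{3/2})
\]
is an $\I$-valued harmonic function. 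It has $A\equiv0$ and $B\equiv1$, and $\Sigma$ is totally geodesic, so $\mu=0$. Hence $u_0$ is exactly nondegenerate in the sense of Definition~\ref{def:nondegenerate-local-model}.

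\textbf{Stretching into a long neck.} Next we deform the metric, keeping the manifold $\RR^n$ fixed, so that the region $\Sigma\times\{\epsilon\le|z|\le 1\}$ becomes a very long cylinder-like neck $\Sigma\times S^1\times[0,L]$. Outside the tube, the metric is Euclidean away from a compact set. We then solve for a global harmonic section $u=\chi u_0+w$. Here $\chi$ is a cutoff supported near $\Sigma$, and $w$ solves
\[
\Delta w=-\Delta(\chi u_0),
\]
with $w$ required to be $L^2$-small near $\Sigma$ in weighted spaces. On the $\I$-twisted bundle, the Laplacian on the exterior with the cone edge at $\Sigma$ is Fredholm on weighted spaces. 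We choose the weight so that $w=O(|z|^{3/2})$ near $\Sigma$, and in particular the $z^{1/2}$ coefficient of $w$ vanishes.

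The key issue is exactly this: imposing $A(w)=0$ is an infinite-dimensional condition, the cokernel noted in the introduction. We therefore do not treat it as a constraint to be killed. Instead we invert the Laplacian in a weight range just above $3/2$ on the $\Sigma$-end and decaying at infinity. That operator may have cokernel equal to harmonic sections growing like $z^{-1/2}$ or $z^{1/2}$ near $\Sigma$.

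\textbf{Using the neck to kill the error.} Along the neck the twisted Laplacian has a spectral gap: every mode is antiperiodic in the $S^1$ factor. So solutions of the homogeneous equation decay or grow like $e^{\pm k t/2}$ with $k$ odd. The error $\Delta(\chi u_0)$ is supported at the far end of the neck. Its influence back at $\Sigma$ is therefore exponentially small in $L$, both in the $A$-component and in the correction to $B$.

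A gluing / contraction argument in weighted Hölder spaces then produces the correction $w$. We find $w$ with $A(w)\equiv0$ exactly, by solving on the neck-plus-core with Dirichlet-type matching to the $z^{3/2}$ model. We also get $\|B(w)\|_{C^0(\Sigma)}\le Ce^{-cL}$. For $L$ large, $B(u)=1+B(w)$ is then nowhere zero.

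Finally, completeness and the Euclidean structure at infinity hold by construction, since all metric changes happen in a compact set. The hard step will be the uniform-in-$L$ invertibility estimate for the twisted Laplacian on the stretched manifold that keeps $A\equiv0$. To get it, we would first establish the estimate on the half-infinite cylinder $\Sigma\times S^1\times[0,\infty)$ capped by the model cone. Then we would patch it to the exterior Euclidean part, where $n\ge3$ gives decay of the Green's function.
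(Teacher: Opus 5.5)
\textbf{There is a genuine gap: as set up, your construction can only return $u\equiv0$.} Since $\chi u_0$ is compactly supported and you require $w$ to decay at infinity, $u=\chi u_0+w$ is an $\I$-valued harmonic section that is $O(r^{1/2})$ at $\Sigma$ and $O(r_\infty^{2-n})$ at infinity. Integrate $|\d u|^2=\operatorname{div}(u\nabla u)$ over a large ball minus a thin tube around $\Sigma$. The boundary term on the tube is $O(\epsilon)$ and the one on the sphere is $O(R^{2-n})$. Hence $\d u=0$, and the monodromy forces $u=0$; this is the kernel statement in Proposition~\ref{prop:uniform-invertibility}.

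Concretely, in the decaying class the equation $\Delta w=-\Delta(\chi u_0)$ is solved by $w=-\chi u_0$ and by nothing else. That solution has $A(w)=0$ but $B(w)=-1$, which contradicts your bound $\|B(w)\|_{C^0}\le Ce^{-cL}$. At weight strictly above $3/2$ there is no solution for any $L$. At that edge weight the Laplacian is not Fredholm; its cokernel is infinite-dimensional.

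Your heuristic for the bound fails because the source is not small. Continued harmonically along the neck, $u_0$ has grown to size $e^{3L/2}$ at the far end. This exactly compensates the $e^{-3L/2}$ damping of the $z^{3/2}$-mode on the way back. The lesson is that a nonzero $\ZT$-harmonic function on such a space cannot decay at infinity. The paper therefore works with sections $u_{P,s}$ asymptotic to a harmonic polynomial $P$, normalized at infinity rather than at $\Sigma$.

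Even with growth at infinity, the neck alone does not give you small $A$. Along the neck the $(q,j)$-mode is damped at rate $\sqrt{q^2+\mu_{a,j}/\varepsilon^2}$; your rates $k/2$ describe only the $\Sigma$-constant modes. So the $\Sigma$-averaged $z^{1/2}$-coefficient behaves like $e^{-s/2}$ and dominates the averaged $z^{3/2}$-coefficient, which behaves like $e^{-3s/2}$. The paper handles this in two layers.
\begin{enumerate}
\item Finite-dimensional layer. It shrinks the cross-section, $\varepsilon=\sqrt{\mu_1}/2$, so that all non-averaged modes of $A$ decay faster than $e^{-3s/2}$. It then takes a $4p$-dimensional space $V$ of harmonic polynomials and perturbs the metric generically on the transition region, so that $P\mapsto(A^{\mathrm{av}}_{P,\infty,a},B^{\mathrm{av}}_{P,\infty,a})_a$ is an isomorphism. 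This yields $P_{s,\theta}$ with $A^{\mathrm{av}}=0$ and $B^{\mathrm{av}}=\theta$ exactly, hence $A=O(e^{-\kappa s})$ and $B=\theta+O(e^{-\kappa s})$.
\item Infinite-dimensional layer. The remaining error in $A$ is not removed by any linear solve; this is the cokernel you mention and then dispose of by ``Dirichlet-type matching''. Instead the branching set is moved to $\Sigma_\nu$. Donaldson's formula $D_\nu A[\dot\nu]=\frac{3}{2}B\dot\nu+\mathcal Q_s(\nu,A,\dot\nu)$ is invertible up to errors bilinear in $A$, because $B\approx\theta\ne0$. A Nash--Moser theorem with tame constants uniform in $s$ then gives $\nu$ with $A\equiv0$.
\end{enumerate}
Finally, a compactly supported diffeomorphism carrying $\Sigma_\nu$ back to $\Sigma$ puts the singular set exactly at $\Sigma$.
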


\begin{remark}
	For the standard Euclidean metric, Li, Mashayekhi, and Zhang recently proved that a $\ZT$-harmonic function with a smooth compact branching set, vanishing $A$-coefficient, and nondegenerate quadratic asymptotics is, up to rigid motions, one of the Donaldson--Yan models \cite{LiMashayekhiZhang2026}. This
	result concerns the standard Euclidean metric. In
	Theorem~\ref{theorem:euclidean-realization}, we allow compactly supported
	perturbations of this metric and thereby realize arbitrary compact smooth
	codimension-two singular sets with trivial normal bundle.
\end{remark}

The construction admits an additional refinement which is essential for
passing to closed manifolds. The realizing section $u$ can be chosen to have
controlled polynomial growth, with an asymptotic harmonic polynomial $P$ at infinity. See Proposition~\ref{prop:phase-family-of-zonal-polynomials} and Theorem~\ref{thm:nash-moser-correction}.
We call any polynomially growing realization a \emph{Euclidean model}.

After choosing a trivialization of $\I$ near infinity, we can define the flux
$$\Flux(u):= \lim_{R \to \infty}\int_{S_R^{n-1}}\partial_r u\,dA$$ for a Euclidean model. 
Theorem~\ref{thm:flux-normalization} shows that the $\ZT$-harmonic 1-form in
Theorem~\ref{theorem:euclidean-realization} can be chosen so that its flux is zero. We call a Euclidean model with vanishing flux a
\emph{zero-flux Euclidean model}.
This zero-flux refinement is precisely the compatibility condition that
allows the Euclidean model to be transplanted into a closed manifold. Polynomial growth, meanwhile, provides the finite-dimensional
asymptotic data needed for matching.

Explicitly, let $(v,\Sigma,\I)$ be a nonzero
$\ZT$-harmonic $1$-form on a closed oriented Riemannian manifold $(M^n,g)$,
let $q\in M\setminus \Sigma$, and let $P_q$ be the first nonzero homogeneous term
of a local harmonic potential for $v$ at $q$.  We call $q$ a \emph{matching point} if
$dP_q$ is nowhere zero on $\mathbb R^n\setminus\{0\}$.  A Euclidean model is said
to match $q$ if the highest-degree part of its asymptotic polynomial agrees
with $P_q$ up to multiplication by a nonzero constant and a
rotation of coordinates. The following theorem is an application of the Calabi-surgery construction in
\cite{HeChenYan}.

\begin{theorem}
	\label{theorem:calabi-insertion}
	Assume that $n\geq3$. Let $q$ be a matching point of
	$(v,\Sigma,\I)$, and let $(u,\Sig_{\RR^n},\I_{\RR^n})$ be a
	zero-flux Euclidean model matching $q$. For every sufficiently small
	$\delta>0$, there are a metric $g_\delta$ and a $\ZT$-harmonic
	$1$-form $(v_\delta,\Sig_\delta,\I_\delta)$ on $(M,g_\delta)$ such that
	\[
		(g_\delta,v_\delta,\I_\delta)=(g,v,\I)
		\quad\text{on }M\setminus B_\delta(q),
		\qquad
		\Sig_\delta\setminus B_\delta(q)=\Sigma.
	\]
	Moreover, $\Sig_\delta\cap B_{\delta/2}(q)$ is a rescaled copy of
	$\Sig_{\RR^n}$.
\end{theorem}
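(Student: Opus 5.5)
The approach is to glue a rescaled copy of the Euclidean model into a small ball around $q$, in the spirit of the Calabi-surgery construction of \cite{HeChenYan}, and then repair the error by a harmonic correction supported inside the ball.

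\textbf{Setup and scaling.} Let $d=\deg P_q$. After a rotation and a constant rescaling of $u$, the highest-degree part of the asymptotic polynomial of $u$ is exactly $P_q$. Near $q$, in normal coordinates $x$, a local potential of $v$ is $w = P_q(x) + O(|x|^{d+1})$. Choose a small scale $\epsilon \ll \delta$ and set
\[
u_\epsilon(x) = \epsilon^{d}\, u(x/\epsilon),
\]
defined on $B_\delta(q)$ with the pulled-back, rescaled metric $\epsilon^2 (\phi_\epsilon)^* g_{\mathrm{model}}$. This metric is $g$-close to Euclidean where $|x|\gtrsim \epsilon R_0$. Its singular set is $\epsilon\Sigma_{\mathbb R^n}$ and it sits inside $B_{\delta/2}(q)$. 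On the annulus $A=\{\delta/4<|x|<\delta\}$, write $u_\epsilon = P_q + (\text{lower-order terms of the asymptotic polynomial})\cdot\epsilon^{\,\cdot} + O(\epsilon^{\gamma})$, and compare this with $w = P_q + O(|x|^{d+1})$.

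\textbf{Gluing.} Interpolate the metric between $g$ and the rescaled model metric on an inner annulus where both are close to Euclidean. Glue the two potentials with a cutoff $\chi$, defining $\tilde u = \chi w + (1-\chi) u_\epsilon$ on $A$. The flat bundles $\I$ and $\I_{\mathbb R^n}$ are both trivial on $A$, so they glue to $\I_\delta$. The error is $f=\Delta_{g_\delta}\tilde u$, which is supported in $A$ and small in weighted Hölder norms.

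\textbf{Correction.} Solve $\Delta_{g_\delta}\phi = -f$ with $\phi$ supported, or at least decaying, inside $B_\delta(q)$ and with $\phi$ vanishing outside. This is the main obstacle, and it comes from two sources.

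The first is the cokernel of the Laplacian on the ball with compactly supported data. Solvability of a Dirichlet-type problem that leaves $v$ untouched outside $B_\delta$ requires matching the full Cauchy data on $\partial B_\delta$, which is overdetermined. The plan is therefore not to demand $\phi$ be compactly supported directly. Instead, I would absorb the mismatch into the Euclidean model side, using the finite-dimensional freedom available there. This freedom consists of lower-order terms of the asymptotic polynomial, translations and rotations, and a metric deformation supported inside $B_{\delta/2}$, which is allowed since $g_\delta$ may differ from $g$ there. Concretely, I would adjust the metric inside the ball so that the corrected function equals $w$ exactly near $\partial B_\delta$. One way to do this is a conformal or compactly supported metric change solving $\Delta_{g_\delta}\tilde u=0$ for a fixed $\tilde u$, viewed as an equation on the metric. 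This is underdetermined and solvable away from critical points of $\tilde u$, which is exactly why the matching condition that $dP_q$ is nowhere zero is needed. The one obstruction is integral: $\int_{\partial B}\partial_r \tilde u = \int_B \Delta\tilde u$. This flux must vanish, and it does because $w$ is harmonic and the model has zero flux by Theorem~\ref{thm:flux-normalization}. Since $n\ge3$, no logarithmic terms arise.

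\textbf{Singular structure.} Finally, check that the nondegeneracy and $A\equiv 0$ conditions for $\Sigma_\delta$ persist. They hold exactly on $\epsilon\Sigma_{\mathbb R^n}$ because the metric there is an exact rescaling of the model metric. They hold on $\Sigma$ because nothing changes outside $B_\delta(q)$. Harmonicity is exact by construction.
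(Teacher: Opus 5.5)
Your plan, in its final form, matches the paper's: keep the glued potential $\tilde u=\chi w+(1-\chi)u_\epsilon$ fixed and treat the metric on the gluing annulus as the unknown. The matching condition keeps $d\tilde u\neq0$ there, and zero flux removes the obstruction. The gap is that the decisive step is only asserted. You need a metric equal to $g$ near the outer sphere and to the rescaled model metric $g_\epsilon$ near the inner sphere with $\Delta\tilde u=0$. Your claim that this is ``solvable away from critical points'', with flux as ``the one obstruction'', is the entire content of the theorem. Necessity of zero flux is just Stokes, but you give no argument for sufficiency.

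The only concrete mechanism you name does not work. For a conformal change $e^{2\varphi}g_\delta$ of a fixed interpolated metric, $*_{e^{2\varphi}g_\delta}d\tilde u=e^{(n-2)\varphi}*_{g_\delta}d\tilde u$. Harmonicity then becomes the transport equation $(n-2)\,d\varphi\wedge*_{g_\delta}d\tilde u=-d*_{g_\delta}d\tilde u$ along gradient lines of $\tilde u$. Requiring $\varphi=0$ near both boundary spheres forces a weighted integral of $d*_{g_\delta}d\tilde u$ to vanish along every gradient segment running across the annulus; for linear $P_q$, these include every chord entering and leaving through the outer sphere. That is infinitely many conditions, which one vanishing flux does not supply. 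The finite-dimensional freedom on the model side (lower-order terms, translations, rotations) plays no role; nothing there needs adjusting.

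The missing idea is Calabi's pointwise criterion, Proposition~\ref{prop:calabi-metric-realization}. For a nowhere-zero $1$-form $\alpha$, an $(n-1)$-form $\beta$ equals $*_g\alpha$ for some metric $g$ if and only if $\alpha\wedge\beta>0$. In that case $g=f^2\alpha^2+\pi^*h$, where $h$ is any metric on $\ker\alpha$ and $f$ is fixed by $\pi^*\mathrm{vol}_h=f\beta$. This turns the nonlinear problem for $g$ into a linear one with an open constraint: find a closed $\omega$ on the annulus $A$ with $d\tilde u\wedge\omega>0$, equal to $*_g dw$ near the outer sphere and to $*_{g_\epsilon}du_\epsilon$ near the inner one.

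The paper takes $\omega=*_{g_\epsilon}du_\epsilon+d(\chi\eta)$ with $d\eta=*_g dw-*_{g_\epsilon}du_\epsilon$. Such an $\eta$ exists precisely because $A$ retracts onto $S^{n-1}$, so $H^{n-1}(A)\cong\RR$, and this closed form has zero integral over a sphere. That is where harmonicity of $w$ on the whole ball and the zero flux of the model enter.

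Positivity also needs a \emph{quantitative} primitive. The radial homotopy operator plus Hodge theory on $S^{n-1}$ give $\|\eta\|_{C^0}\lesssim\delta\,\|{*_g dw-*_{g_\epsilon}du_\epsilon}\|_{C^0}$. Hence every error term in $d\tilde u\wedge\omega$, including $d\chi\wedge\eta$, is smaller than the leading term $|dP_q|^2\mathrm{vol}\ge c^2\delta^{2m-2}\mathrm{vol}$ by a factor $O(\delta+\epsilon/\delta)$. Here $m=\deg P_q$, and the matching condition supplies $c>0$. Only then does interpolating the metrics that $g$ and $g_\epsilon$ induce on $\ker d\tilde u$ give $g_\delta$ with $*_{g_\delta}d\tilde u=\omega$. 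This $g_\delta$ agrees with $g$ and $g_\epsilon$ near the two boundary spheres, and $d\tilde u$ is then closed and coclosed.
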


Every regular point is a matching point with a linear
matching polynomial, so the matching condition is automatic for
models of degree one.  Combining such a model with a nonzero ordinary harmonic
$1$-form gives the following topological application, which generalizes
earlier results in \cite{HeChenYan,Yan2025Euclidean}.

\begin{theorem}
	\label{corollary:b1-positive}
	Let $(M^n,g)$ be a closed oriented Riemannian manifold, where $n\geq 3$, and suppose that it carries a non-zero 
	$\ZT$-harmonic $1$-form $(v,\Sig_M,\I_M)$. (The branching set $\Sig_M$ may be singular or even empty.) 
	Let $\Sigma_0\subset\mathbb R^n$ be a compact connected oriented codimension-two submanifold with trivial normal bundle.
	For every $p \in M^n \setminus \Sig_M$ with $v(p)\neq 0$,
	there exist a metric $g'$ and a $\ZT$-harmonic $1$-form on $(M^n,g')$,
	such that the branching set is the union of $\Sig_M$ and a rescaled copy of $\Sigma_0$.
	The new $1$-form is nondegenerate along the inserted copy of $\Sigma_0$.
	The new metric, $1$-form, and flat line bundle agree with the original data on the complement of a neighbourhood of $p$.
\end{theorem}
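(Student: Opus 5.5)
The plan is to reduce Theorem~\ref{corollary:b1-positive} to Theorem~\ref{theorem:calabi-insertion}. Two ingredients are needed. First, the point $p$ must be a matching point of $(v,\Sig_M,\I_M)$. Second, we need a zero-flux Euclidean model with singular set $\Sigma_0$ whose asymptotic polynomial has linear top-degree part.

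\emph{Step 1 (the matching point).} Since $p\notin\Sig_M$, choose a small ball around $p$ on which $\I_M$ is trivial. On this ball $v=\d f$ for a harmonic function $f$. The hypothesis $v(p)\neq 0$ means that the first nonzero homogeneous term of $f-f(p)$ at $p$ is the linear function $P_p(x)=\langle v(p),x\rangle$ in normal coordinates. Then $\d P_p$ is the nonzero constant covector $v(p)$, so it is nowhere zero on $\RR^n\setminus\{0\}$, and $p$ is a matching point. Any linear function is carried by a rotation and a scaling to $x_1$. Hence a Euclidean model matches $p$ as soon as the top-degree part of its asymptotic polynomial is a nonzero linear function.

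\emph{Step 2 (the model).} Apply Theorem~\ref{theorem:euclidean-realization} to $\Sigma_0$, which is compact and embedded with trivial normal bundle. We use its refined form: Proposition~\ref{prop:phase-family-of-zonal-polynomials} and Theorem~\ref{thm:nash-moser-correction} give the polynomial-growth version, and Theorem~\ref{thm:flux-normalization} gives the zero-flux version. We want the asymptotic polynomial to be of degree one, that is $P=x_1+\text{const}$. I would arrange this when choosing the phase family in Proposition~\ref{prop:phase-family-of-zonal-polynomials}, by taking the degree-one zonal polynomial as the prescribed leading term at infinity. I would then check two things. The Nash--Moser correction must only add terms of lower order, decaying relative to $P$, so that the leading term is unchanged. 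The flux normalization must subtract only something like a multiple of a Green-type function, which is $O(r^{2-n})$ and so does not change the linear leading term. This produces a zero-flux Euclidean model $(u,\Sig_{\RR^n},\I_{\RR^n})$ with $\Sig_{\RR^n}=\Sigma_0$ that is nondegenerate along $\Sigma_0$ and matches $p$.

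\emph{Step 3 (insertion).} Apply Theorem~\ref{theorem:calabi-insertion} with $q=p$ and a $\delta$ small enough that $B_\delta(p)\cap\Sig_M=\emptyset$. This gives $(g_\delta,v_\delta,\Sig_\delta,\I_\delta)$, agreeing with the original data outside $B_\delta(p)$, with
\[
\Sig_\delta=\Sig_M\cup(\Sig_\delta\cap B_\delta(p)).
\]
We need the part of $\Sig_\delta$ inside $B_\delta(p)$ to be exactly the rescaled $\Sigma_0$ in $B_{\delta/2}(p)$, with no extra components in the annulus $B_\delta\setminus B_{\delta/2}$. I expect this to follow from the construction, because in the gluing region the form is close to the nonvanishing linear model and the line bundle is trivial there. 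Nondegeneracy along the inserted copy is an open condition on the $B$-coefficient. I would verify that the Calabi-surgery perturbation is small in the relevant weighted norm near the rescaled $\Sigma_0$, so that $B\neq0$ survives; after rescaling, the error should tend to zero as $\delta\to0$. The remaining statements, namely agreement of the data outside a neighbourhood of $p$ and the fact that $\Sig_M$ may be singular or empty, are untouched because everything happens inside $B_\delta(p)$.

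The main obstacle is Step 2: ensuring that the Euclidean model can be chosen with degree-one asymptotics while keeping both the zero-flux normalization and nondegeneracy. The concern is that the flux-killing correction might need a growing term that changes the top-degree part. My first attempt would be to check that the flux normalization uses only decaying harmonic corrections. If it does not, I would absorb the flux using the freedom in the phase family of zonal polynomials while keeping the linear leading term fixed.
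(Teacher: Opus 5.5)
There is a genuine gap in Step 2. Your Steps 1 and 3 are sound. In Step 3 no perturbation argument is needed: inside $B_{\delta/2}(p)$ the glued data are exactly the rescaled model $(\lambda^2 g_e,\d u_\lambda)$, so nondegeneracy is inherited verbatim.

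The problem is that the construction gives you no handle on the leading term of the asymptotic polynomial. In Proposition~\ref{prop:phase-family-of-zonal-polynomials}, $P_{s,\theta}=e^{3s/2}\Gamma_s^{-1}(0,\theta)$ is forced by the $4p$ real conditions $A^{\mathrm{av}}=0$, $B^{\mathrm{av}}=\theta$. These are imposed on a space $V$ of dimension exactly $4p$, which is what Lemma~\ref{lem:generic-surjectivity-coefficient-map} requires. The one leftover parameter $\theta$ is then spent in Theorem~\ref{thm:flux-normalization} to kill the flux. So neither ``prescribing the degree-one zonal polynomial as the leading term'' nor your fallback of absorbing the flux while holding the linear term fixed is available. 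Your worry that Nash--Moser or the flux step might change the top-degree term is, on the other hand, moot. By definition $u_{P,s,\nu}=P+O(r_\infty^{\delta_*})$ for every $\nu$. The flux is removed by choosing $\theta$ via the oddness $P_{s,-\theta}=-P_{s,\theta}$, not by adding a decaying correction.

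What is missing is the following. Take $V$ to be a $4$-dimensional subspace of the $(n+1)$-dimensional space of harmonic polynomials of degree at most one. This is exactly where the connectedness of $\Sigma_0$ (so $p=1$ and $4p=4\le n+1$) and $n\ge 3$ enter; your proposal uses neither hypothesis.

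Then $P_s$ automatically has degree at most one, but you still must exclude that $P_s$ is constant, since a degree-zero model does not match a regular point. This needs an argument. Suppose $P_s\equiv c$. Zero flux gives $u=c+O(r_\infty^{1-n})$ at infinity, and $u=O(r^{3/2})$ along $\Sigma_{\nu_s}$. Hence both boundary terms in $\int|\d u|^2=\int_{\partial}u\,{*}\d u$ vanish. So $u$ is constant, hence zero by the $-1$ monodromy, contradicting $B\neq 0$.

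With this, $P_s$ is a nonconstant affine function, and Corollary~\ref{cor:linear-zero-flux-insertion} (equivalently your Step 3) applies.
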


Note that when $b_1(M) > 0$, any ordinary nonzero harmonic $1$-form can be seen as a $\ZT$-harmonic $1$-form with empty branching set. Hence, such forms can be used as the initial data for Theorem \ref{corollary:b1-positive}. If one looks for examples of $\ZT$-harmonic $1$-forms on manifolds with $b_1(M)=0$, one can consider
\cite{He2025,HeWentworthZhang2024}.

In dimension three, the need for a matching point in
Theorem~\ref{theorem:calabi-insertion} poses no obstruction, even when a matching polynomial of higher degree is required.
Given any regular point $q$ and any integer
$m\geq3$, the Calabi-surgery construction of \cite{HeChenYan} produces a new
$\ZT$-harmonic $1$-form for which $q$ is a matching point whose matching polynomial is the
zonal polynomial $Z_m$; see
Proposition~\ref{prop:zonal-matching-point-preparation}.
As a consequence, singular sets with any prescribed finite number of connected components can be inserted into closed $3$-manifolds; 
see Corollaries~\ref{cor:zonal-zero-flux-model-existence} and \ref{cor:zonal-zero-flux-insertion}. 
Thus, even on closed manifolds, the topology of the singular set is subject to remarkably few restrictions.

\subsection*{Acknowledgments}
We would like to thank Thomas Walpuski for helpful comments. 

\subsection*{AI disclosure}
All mathematical ideas and proofs were developed by the authors. At the same time, AI tools were used in the editing and review of this manuscript. It is also used in figure preparation.

\begin{remark}
	In the proof of Theorem \ref{theorem:euclidean-realization}, we used the Nash--Moser theorem. After finishing this manuscript, we realized there may be an alternative proof that does not require this. 


	
	Namely, Theorem \ref{theorem:calabi-insertion} is written for Euclidean models, but we expect that it also works for cylindrical models. The boundary region of Section \ref{subsec:model-spaces-metric} would be an example of such a cylindrical model.
	Using the Fourier decomposition, all multivalued harmonic functions are known on the boundary region. This is explicitly shown in Equation \eqref{eq:radial-ode}. One can use these model solutions as an alternative for the matching polynomials.
	
	Moreover, we don't expect that there is a flux condition for these cylindrical models. Namely, the flux condition in the Euclidean case is necessary to get rid of a cohomological obstruction. Due to Lemma 3.5 in \cite{HeSalm2026MetricPerturbations}, there won't be a cohomological obstruction in the cylindrical case.
\end{remark}
\section{Long neck spaces and elliptic theory}
\label{sec:long-neck-elliptic}

This section introduces the geometric and analytic framework for the long-neck
spaces, which have been used in \cite{salm2024construction}. We first construct a family of metrics $g_s$ on the complement of a fixed codimension-two set by inserting a cylindrical neck near the branch locus. With these spaces in mind, we set up the weighted H\"older norms and prove uniform regularity estimates.

\subsection{The model spaces}
\label{subsec:model-spaces-metric}

Let $n\ge3$.  In this subsection we construct a family of long-neck metrics
$g_s$ on $\RR^n$ and its limit space  $X_\infty:=(\RR^n\setminus\Sigma, g_\infty)$, following the neck-stretching model of \cite{salm2024construction}. Let $\Sigma\subset\RR^n$ be a fixed compact codimension-two submanifold with trivial normal bundle, and denote its connected components by $\Sigma_a$, i.e. $\Sigma=\coprod_{a=1}^p\Sigma_a$. Let $\mathcal I\to \RR^n\setminus \Sigma$ be the flat real Euclidean line bundle with monodromy $-1$ around every positive normal meridian.

Fix $R_0 \gg 1$ once and for all and let $s>0$. In our model space, this constant will play the role of the length of the neck. By the triviality of the normal bundle and using the exponential map, a tubular neighbourhood of $\Sigma$ can be
identified\footnote{
As we haven't defined a metric on $\RR^n$ yet, this identification is only up to diffeomorphism. This also allows us to pick the ball $B_{R_0+s+1}(0) \subset \CC$ as large as we want, because the coordinate $r$ does not relate to the geodesic distance to $\Sigma$ with respect to the flat metric.} with $B_{R_0+s+1}(0) \times \Sigma$ for the ball $B_{R_0+s+1}(0) \subset \CC$. On $\CC$ we have the coordinates $z = r e^{i \phi}$, which we will use throughout this paper. 

For each $a\in\{1,\ldots,p\}$, we decompose the stretched tubular
neighbourhood of $\Sigma_a$ into two regions:
\begin{align}
\label{eq:finite-neck-regions}
        \mathcal B_{\Sigma_a}
        &:=
        B_{R_0}(0)\times\Sigma_a, \notag\\
        \mathcal N_{\Sigma_a,s}
        &:=
        \bigl(B_{R_0+s}(0)\setminus B_{R_0}(0)\bigr)\times\Sigma_a .
\end{align}
Here $B_R(0)$ denotes the disk of radius $R$ in the normal complex plane.  We
call $\mathcal B_{\Sigma_a}$ the \textbf{boundary region} of $\Sigma_a$,
$\mathcal N_{\Sigma_a,s}$ the \textbf{neck region} of $\Sigma_a$, and $s$ the
\textbf{neck length}.

Choose $R_\infty\gg1$ so that the stretched tubular neighbourhood is contained
in $B_{R_\infty}(0)\subset\RR^n$.  Set
\begin{equation}
\label{eq:asymptotic-transition-regions}
\begin{split}
        \mathcal A_\infty
        &:=
        \RR^n\setminus B_{R_\infty}(0), \\
        \mathcal C_{\Sigma,s}
        &:=
        B_{R_\infty}(0)\setminus
        \bigsqcup_{a=1}^p
        \bigl(\mathcal B_{\Sigma_a}\cup\mathcal N_{\Sigma_a,s}\bigr).
\end{split}
\end{equation}
We call $\mathcal A_\infty$ the \textbf{asymptotic region} and
$\mathcal C_{\Sigma,s}$ the \textbf{transition region}. 
(See Figure \ref{fig:finite-neck-regions} for a schematic overview of all the regions.)

We now define $g_s$ region by region.  Fix a Riemannian metric $h_\Sigma$ on
$\Sigma$, and write $h_{\Sigma_a}:=h_\Sigma|_{\Sigma_a}$.

On each neck region $\mathcal N_{\Sigma_a,s}$, define
\begin{equation}
\label{eq:metric-neck-region}
        g_s=\mathrm d r^2+\mathrm d\phi^2+\varepsilon^2h_{\Sigma_a},
\end{equation}
where the constant $\varepsilon$ will be explicitly chosen in Definition \ref{def:epsilon-in-metric}. For now, it is sufficient to assume that $\varepsilon$ is a small number depending only on $h_\Sigma$.
In summary, the neck is a cylinder with cross-section
$S^1_\phi\times\Sigma_a$.

To extend this metric to the boundary region, choose a smooth nondecreasing
function $\tilde r:[0,\infty)\to[0,1]$ such that $\tilde r(r)=r$ for
$r\le\frac12$ and $\tilde r(r)=1$ for $r\ge R_0$.  On
$\mathcal B_{\Sigma_a}$, define
\begin{equation}
\label{eq:finite-neck-metric-boundary}
        g_s
        =
        \mathrm d r^2+\tilde r(r)^2\,\mathrm d\phi^2
        +\varepsilon^2h_{\Sigma_a}.
\end{equation}
Hence, close to $\Sigma_a$,
$g_s=\mathrm d r^2+r^2\,\mathrm d\phi^2+\varepsilon^2h_{\Sigma_a}$, while near
$r=R_0$ it agrees with the cylindrical metric on the neck.

On $\mathcal A_\infty$, we impose the Euclidean metric $g_s=g_{\mathrm{Eucl}}$.  On
$\mathcal C_{\Sigma,s}$, we choose a smooth interpolation between the
cylindrical metric near the neck and the Euclidean metric near $\mathcal A_\infty$.
We choose these interpolations so that there is a fixed compact Riemannian
manifold $(\mathcal C_\Sigma,g_{\mathcal C})$ and collar-preserving
diffeomorphisms $\Phi_s:\mathcal C_\Sigma\to\mathcal C_{\Sigma,s}$ satisfying
$\Phi_s^*g_s=g_{\mathcal C}$. Thus $\mathcal C_{\Sigma,s}$ may move inside
$\RR^n$, but its intrinsic geometry is independent of $s$.
We write
\[
        X_s:=(X,g_s)=(\RR^n,g_s)
\]
for the finite-neck space.  Thus, the underlying manifold is $\RR^n$, while the geometry depends on $s$.

\begin{figure}[htbp]
        \centering
        \includegraphics[width=0.82\textwidth]{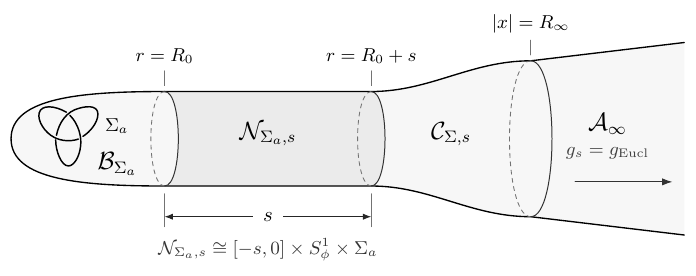}
        \caption{A schematic decomposition of $X_s$.}
        \label{fig:finite-neck-regions}
\end{figure}

Finally, we also set up the limiting space $X_\infty$.

Let $\mathcal{N}_{\Sigma_a, \infty}:=(-\infty, 0] \times S^1_\phi\times\Sigma_a$, where we denote the coordinate on $(-\infty, 0]$ by $t$.
Equip $\mathcal{N}_{\Sigma_a, \infty}$ with the metric
\[
        g_\infty=\mathrm d t^2+\mathrm d\phi^2
        +\varepsilon^2h_{\Sigma_a}.
\]
Using the fixed transition model $\mathcal C_\Sigma$, define
\[
        X_\infty:=\mathcal A_\infty\cup\mathcal C_\Sigma\cup
        \bigcup_{a=1}^p\mathcal{N}_{\Sigma_a, \infty}
\]
Here $\mathcal C_\Sigma$ is glued to $\mathcal A_\infty$ along their common boundary, and $\{0\}\times S^1_\phi\times\Sigma_a$ is glued to the corresponding inner boundary of $\mathcal C_\Sigma$. 
Let $g_\infty$ equal $g_{\mathrm{Eucl}}$ on
$\mathcal A_\infty$, equal $g_{\mathcal C}$ on $\mathcal C_\Sigma$.
This gluing identifies $t=0$ with $r=R_0+s$; equivalently, on the finite neck of $X_s$,
\[
        t=r-(R_0+s),
\]
so that $\mathcal N_{\Sigma_a,s}$ is identified with $[-s,0]\times S^1_\phi\times\Sigma_a$. We call $\mathcal{N}_{\Sigma_a, \infty}$ a \textbf{cylindrical end} of $X_\infty$.

\subsection{H\"older norms}
\label{subsec:definition-holder-norms}
Next we introduce the weighted H\"older spaces we use in this paper.
When considering the model space with a finite length neck, we use the Donaldson-type H\"older norms from
\cite{Donaldson2021} near \(\Sigma\), while on the Euclidean end we use standard
weighted norms as in \cite{McOwen1979}. We first define the finite-neck norms and then record the
additional cylindrical weights used on \(X_\infty\).

Let \(k\in\mathbb N\) and \(\beta\in\left(0,\frac{1}{2}\right)\).
Let $(r, \phi)$ be the coordinates in Section \ref{subsec:model-spaces-metric} and let $(y_1, \ldots, y_{n-2})$ be local coordinates on $\Sigma$.
Let \(\mathcal T^j\) be the set of degree \(j\) monomials generated by
\(\tilde r\partial_r\), \(\partial_\phi\), and
\(\partial_{y_1},\ldots,\partial_{y_{n-2}}\).  Using a finite cover of \(\Sigma\) and a
subordinate partition of unity, define
\begin{equation}
\label{eq:donaldson-local-norm}
        \|u\|_{\mathcal{D}^{k,\beta}(\mathcal B_\Sigma\cup\mathcal N_{\Sigma,s})}
        :=
        \max_{\substack{0\le j\le k\\ D\in \mathcal T^j}}
        \|Du\|_{\mathcal{C}^{0,\beta}} .
\end{equation}
Up to sign, there is a canonical trivialization of $\mathcal I$ on local coordinate patches. Using this trivialization, we extend the definition in \eqref{eq:donaldson-local-norm} to \(\mathcal I\)-valued sections. Near the branching set, the definition in \eqref{eq:donaldson-local-norm} coincides with the norm given in \cite{Donaldson2021}. On the neck, \eqref{eq:donaldson-local-norm} is equivalent to the ordinary \(\mathcal{C}^{k,\beta}\)-norm for the cylindrical metric.

Next we define the weighted H\"older norm on $\mathcal{A}_\infty$. Let $r_\infty(x):=|x|$ be the Euclidean distance to the origin in $\RR^n$. We can choose \(\rho_\infty: X_s\to[1,\infty)\), independently of $s$, such that
\(\rho_\infty=1\) on \(\mathcal B_\Sigma\cup\mathcal N_{\Sigma,s}\),
\(\rho_\infty=r_\infty\) on \(\mathcal A_\infty\), and \(\rho_\infty\) is nondecreasing
throughout the transition region.

With the function $\rho_\infty$, we set up the weighted H\"older norms on $\mathcal{A}_\infty$. For a section \(u \in \Gamma(\mathcal{I})\) on \(\mathcal A_\infty\), set
\begin{equation}
    \label{eq:euclidean-weighted-D-norm-zeroth-order}
        \|u\|_{\mathcal{D}^{0,\beta}_\gamma(\mathcal A_\infty)}
        :=
        \sup_{\mathcal A_\infty}\rho_\infty^{-\gamma}|u|
        +
        \sup_{\substack{x\ne x'\\ |x-x'|\le \rho_\infty(x)/2}}
        \rho_\infty(x)^{-\gamma+\beta}
        \frac{|u(x)-u(x')|}{|x-x'|^\beta},
\end{equation}
and define
\begin{equation}
    \label{eq:euclidean-weighted-D-norm}
    \|u\|_{\mathcal{D}^{k,\beta}_\delta(\mathcal A_\infty)}
        :=
        \sum_{j=0}^k
        \|\nabla^j u\|_{\mathcal{D}^{0,\beta}_{\delta-j}(\mathcal A_\infty)} .
\end{equation}

Finally, on the transition region set
$
 \|u\|_{\mathcal{D}^{k,\beta}_\delta(\mathcal C_{\Sigma,s})} := \|u\|_{\mathcal{C}^{k,\beta}(\mathcal C_{\Sigma,s})},
$
where $\mathcal{C}^{k,\beta}$ is the standard H\"older norm on $\mathcal C_{\Sigma,s}$.
Using a partition of unity subordinate to the boundary, neck, transition, and asymptotic regions, we combine these local norms to get a global $\mathcal{D}^{k,\beta}_\delta$-norm on $X_s$.
We also use the zeroth-order norm
\begin{equation*}
\begin{aligned}
        \|u\|_{\mathcal{D}^0_\delta(X_s)} := \|\rho_\infty^{-\delta} u\|_{\mathcal{C}^0(X_s)}.
\end{aligned}
\end{equation*}
Equivalent choices of covers and flat local trivializations give uniformly
equivalent norms.

Next we define weighted H\"older norms on $X_\infty$, which we denote as $\|\ldots\|_{\mathcal{D}^{k,\beta}_{\delta, \gamma}(X_\infty)}$. On the transition region and on the asymptotic region, this norm coincides with the $\mathcal{D}^{k,\beta}_{\delta}$ norm, which we defined for the finite neck case. 

On the cylindrical end $\mathcal{N}_{\Sigma_a, \infty}$, consider the coordinates $(t, \phi, y_i)$ which we defined in Section \ref{subsec:model-spaces-metric}.
Choose a smooth nondecreasing function
\(\rho_{\Sigma_a}:\mathcal{N}_{\Sigma_a, \infty}\to(-\infty,0]\) such that
\(\rho_{\Sigma_a}=0\) when $t > -1$ and
\(\rho_{\Sigma_a}=t\) when $t < -2$. Extend $\rho_{\Sigma_a}$ by zero to the whole of $X_\infty$. Define 
\begin{align*}
        \|u\|_{\mathcal{D}^{k, \beta}_{\delta, \gamma}
        (\mathcal{N}_{\Sigma_a} \subset X_\infty)
        } := \|e^{- \gamma \cdot \rho_{\Sigma_a}} u\|_{\mathcal{C}^{k, \beta}(\mathcal{N}_{\Sigma_a})},
\end{align*}
where $\mathcal{C}^{k,\beta}$ is the standard H\"older norm with respect to the cylindrical metric on $\mathcal{N}_{\Sigma_a}$.
Using a partition of unity subordinate to the transition region, the asymptotic region, and the cylindrical ends, we combine these local norms to define $\|\cdot\|_{\mathcal{D}^{k,\beta}_{\delta,\gamma}(X_\infty)}$.

Let \(\Delta_s\) and \(\Delta_\infty\) denote the Laplacians of \(g_s\) and
\(g_\infty\), acting on \(\mathcal I\)-valued sections. 
When the choice of metric is obvious, we write \(\Delta\). For
\(\delta\in\mathbb R\), define
\begin{align*}
        E^{k+2,\beta}_{\delta+2}(X_s)
        :=&
        \left\{
        u\in \mathcal{D}^{k+2,\beta}_{\delta+2}(X_s):
        \Delta_su\in \mathcal{D}^{k,\beta}_{\delta}(X_s)
        \right\}, \text{ and}\\
        E^{k+2,\beta}_{\delta+2,\gamma}(X_\infty)
        :=&
        \mathcal{D}^{k+2,\beta}_{\delta+2,\gamma}(X_\infty).
\end{align*}
We equip $E^{k+2,\beta}_{\delta+2}(X_s)$ with the graph norm
\begin{align*}
        \|u\|_{E^{k+2,\beta}_{\delta+2}(X_s)} 
        := \|u\|_{\mathcal{D}^{k+2,\beta}_{\delta+2}(X_s)} + \|\Delta_su\|_{\mathcal{D}^{k,\beta}_\delta(X_s)},
\end{align*}
while we equip $E^{k+2,\beta}_{\delta+2,\gamma}(X_\infty)$ with the $\mathcal{D}^{k+2,\beta}_{\delta+2,\gamma}$-norm.
In the finite neck case, this is in analogy with \cite[Section 3]{Donaldson2021}. 

\subsection{(Weak) inverses and uniform regularity estimates.}
\label{subsec:uniform-schauder-inverse}
Next we study the invertibility of $\Delta$ using the previous H\"older spaces. To study the cases $X_s$ and $X_\infty$ simultaneously, write $Y$ for either $X_s$ or $X_\infty$. Set \(Y^\circ=X_s\setminus\Sigma\) when $Y = X_s$ and set $Y^\circ = X_\infty$ otherwise.

Let $\Delta_Y:\Gamma(\mathcal I)\to \Gamma(\mathcal I)$ be either $\Delta_s$ or $\Delta_\infty$. Given
\(f\in L^1_{\mathrm{loc}}(Y^\circ;\mathcal I)\), a section
\(u\in H^1_{\mathrm{loc}}(Y^\circ;\mathcal I)\) is a \emph{weak solution}
of \(\Delta_Y u=f\) if
\begin{equation}
\label{eq:weak-solution}
        \int_{Y^\circ}
        \langle \mathrm du,\mathrm d\psi\rangle_{g_Y}\,\mathrm d\mu_Y
        =
        \int_{Y^\circ}
        \langle f,\psi\rangle_{\mathcal I}\,\mathrm d\mu_Y
\end{equation}
for every \(\psi\in C_c^\infty(Y^\circ;\mathcal I)\) and $u$ is called the weak inverse of $f$.

Set \(\delta_*=(2-n)/2\).
Let $0 < \epsilon \ll 1$ and fix $\delta \in (\delta_* - \epsilon, \delta_* + \epsilon)$.
Choose a smooth function
\(\chi:[0,\infty)\to[0,1]\) such that \(\chi=0\) on
\([0,R_\infty]\) and \(\chi=1\) on
\([R_\infty+1,\infty)\). On the Euclidean end, define \(\varrho>0\) by
\begin{equation}
\label{eq:weak-radial-weight}
\varrho^{\delta}
=
-\frac{\delta(n-2+\delta)}{n-2}
\left[
r_\infty^{2-n}
\int_0^{r_\infty}\chi(\tau)\tau^{n+\delta-3}\,\mathrm d\tau
+
\int_{r_\infty}^{\infty}\chi(\tau)\tau^{\delta-1}\,\mathrm d\tau
\right].
\end{equation}

It is constant for \(r_\infty\le R_\infty\), so we extend it constantly
over the rest of \(X_s\) and \(X_\infty\); expressions containing
\(\chi(r_\infty)\) are extended by zero off \(\mathcal A_\infty\).
The function $\varrho$ is selected this way, because it satisfies
$$
\Delta_Y(\varrho^{\delta})
= \chi(r_\infty) \Delta_Y(r_\infty^{\delta}) = - \delta(n-2+\delta) \:\chi(r_\infty)\: r_\infty^{\delta-2} > 0.
$$
Moreover, it is strictly positive and when $r_\infty$ is large, $\varrho^\delta = r_\infty^\delta + \mathcal{O}(r^{2-n}_\infty)$. All these properties can be shown by direct computation.

Let \(H(Y)\) be the completion of
\(C_c^\infty(Y^\circ;\mathcal I)\) with respect to
\begin{equation}
\label{eq:weak-energy-inner-product}
\begin{split}
\langle v,z\rangle_{H(Y)}
&:=
\int_{Y^\circ}
\varrho^{2-n}\langle\mathrm dv,\mathrm dz\rangle_{g_Y}\,\mathrm d\mu_Y\\
&\quad
-\delta(n-2+\delta)
\int_{Y^\circ}
\varrho^{2-n-\delta}\chi(r_\infty)r_\infty^{\delta-2}
\langle v,z\rangle\,\mathrm d\mu_Y.
\end{split}
\end{equation}
On \(X_\infty\),
write \(\ell_\infty:=\sum_a\rho_{\Sigma_a}\), then
\(\ell_\infty=t\) on each cylindrical end.

\begin{lemma}
\label{lemma:weak-invertibility}
There exist \(\epsilon,C>0\) such that the following statements hold for
\(|\delta-\delta_*|<\epsilon\).

For every \(s>0\), if
\(\varrho^{2-\frac n2-\delta}f\in L^2(X_s)\), then
\(\Delta_s u_s=f\) has a unique weak solution satisfying
\(\varrho^{-\delta} u_s\in H(X_s)\). If \(|\gamma|<\epsilon\) and
\(\varrho^{2-\frac n2-\delta}e^{-\gamma\ell_\infty}f
\in L^2(X_\infty)\), then \(\Delta_\infty u_\infty=f\) has a unique
weak solution satisfying
\(\varrho^{-\delta}e^{-\gamma\ell_\infty} u_\infty\in H(X_\infty)\).
Moreover,
\begin{equation}
\label{eq:uniform-weak-inverse-estimate}
\begin{split}
\|\varrho^{-\delta}u_s\|_{H(X_s)}
&\le
C\|\varrho^{2-\frac n2-\delta}f\|_{L^2(X_s)},\\
\|\varrho^{-\delta}e^{-\gamma\ell_\infty}u_\infty\|_{H(X_\infty)}
&\le
C\|\varrho^{2-\frac n2-\delta}e^{-\gamma\ell_\infty}f\|_{L^2(X_\infty)}.
\end{split}
\end{equation}
The constant \(C\) is independent of \(s\), \(\delta\), and \(\gamma\)
in the indicated ranges.
\end{lemma}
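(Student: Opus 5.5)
Write $w := \varrho^{-\delta}u$ (and $w := \varrho^{-\delta}e^{-\gamma\ell_\infty}u$ on $X_\infty$), so that $u = \varrho^{\delta}w$. The plan is a Lax--Milgram argument on $H(Y)$.

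\textbf{Conjugated bilinear form.} Conjugate the weak equation \eqref{eq:weak-solution} by the weight and test with $\psi = \varrho^{2-n-\delta}\zeta$ for $\zeta\in C_c^\infty(Y^\circ;\mathcal I)$. This gives a bilinear form
\[
a(w,\zeta) := \int_{Y^\circ} \langle \mathrm d(\varrho^{\delta}w),\,\mathrm d(\varrho^{2-n-\delta}\zeta)\rangle_{g_Y}\,\mathrm d\mu_Y ,
\]
and the weak equation becomes $a(w,\zeta) = \int \varrho^{2-n-\delta}\langle f,\zeta\rangle$. Expanding, the cross terms involve only $\mathrm d\varrho$, which is supported in $\mathcal A_\infty$. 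There $\varrho^{\delta}=r_\infty^{\delta}+O(r_\infty^{2-n})$, so on $\mathcal A_\infty$ the form is a small perturbation of the pure Euclidean weighted form. The powers are chosen so that the leading part is $\int\varrho^{2-n}\langle\mathrm dw,\mathrm d\zeta\rangle$. Integrating the mixed terms $\int \varrho^{1-n}\langle\mathrm d\varrho,\mathrm d\langle w,\zeta\rangle\rangle$ by parts produces a zeroth-order term. Using $\Delta(\varrho^\delta)=-\delta(n-2+\delta)\chi r_\infty^{\delta-2}$, which holds by construction of $\varrho$, this term reproduces exactly the potential term in \eqref{eq:weak-energy-inner-product}, up to errors of size $O(|\delta-\delta_*|)$ times the $H$-norm.

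\textbf{Coercivity.} At $\delta=\delta_*$ the coefficient $-\delta(n-2+\delta)=(n-2)^2/4$ is positive, so the potential term is a genuine positive weight. Combined with the Hardy inequality on $\mathcal A_\infty$, I expect
\[
a(w,w)\ge c\,\|w\|_{H(Y)}^2 - C|\delta-\delta_*|\,\|w\|_{H(Y)}^2 .
\]
For $\epsilon$ small this gives coercivity $a(w,w)\ge \tfrac c2\|w\|_H^2$.

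On $X_\infty$ the additional factor $e^{\gamma\ell_\infty}$ contributes further cross terms. They are bounded by $|\gamma|\int\varrho^{2-n}|\mathrm dw||w|$ on the cylinders, where $\varrho$ is constant. Absorbing them is the main obstacle, because it requires a Poincaré inequality on the cylindrical ends controlling $\int|w|^2$ by $\int|\mathrm dw|^2$. That inequality is exactly what the monodromy provides: on each slice $S^1_\phi\times\Sigma_a$ the section is antiperiodic in $\phi$, so
\[
\int_{S^1}|w|^2 \le 4\int_{S^1}|\partial_\phi w|^2 .
\]
This slice bound is independent of $s$, of the neck length, and of $\varepsilon$. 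The same slice bound, applied on the boundary region via $\tilde r\,\mathrm d\phi$, also controls $w$ near $\Sigma$. This ensures that $H(Y)$ is a genuine Hilbert space of functions and that the $L^2$ part of the norm is controlled on the neck and boundary regions.

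\textbf{Conclusion.} The right-hand side functional is bounded on $H(Y)$. By Cauchy--Schwarz,
\[
\Bigl|\int\varrho^{2-n-\delta}\langle f,\zeta\rangle\Bigr|\le \|\varrho^{2-\frac n2-\delta}f\|_{L^2}\,\|\varrho^{-\frac n2}\zeta\|_{L^2}.
\]
The second factor is bounded by $C\|\zeta\|_H$: on $\mathcal A_\infty$ this is the weighted Hardy term, and on the compact part and the necks it is the Poincaré bound above. Lax--Milgram then yields existence and uniqueness of $w\in H(Y)$ together with the estimate \eqref{eq:uniform-weak-inverse-estimate}. The constants are uniform in $s$, $\delta$ and $\gamma$, because every inequality used is either local to the $s$-independent pieces $\mathcal A_\infty$ and $\mathcal C_\Sigma$, or slicewise on the cylinders.
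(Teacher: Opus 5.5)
Your proposal is correct and follows essentially the same route as the paper: you conjugate by $\varrho^{\delta}e^{\gamma\ell_\infty}$ and use $\Delta_Y(\varrho^\delta)=-\delta(n-2+\delta)\chi(r_\infty) r_\infty^{\delta-2}$ to show that the resulting bilinear form differs from $\langle\cdot,\cdot\rangle_{H(Y)}$ by $O(|\delta-\delta_*|+|\gamma|+\gamma^2)\|w\|_{H(Y)}\|\zeta\|_{H(Y)}$. You then control these errors and the right-hand side by an $s$-uniform Hardy--Poincar\'e inequality (potential term on the Euclidean end, antiperiodic circle estimate on necks and boundary regions, Poincar\'e on the fixed compact pieces) and conclude with Lax--Milgram. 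Two points are left implicit in your write-up, namely boundedness of the form on $H(Y)\times H(Y)$ and the passage from testing against $C_c^\infty(Y^\circ;\mathcal I)$ to all of $H(Y)$ for uniqueness; both follow from the same estimates and density, as in the paper.
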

\begin{proof}
For the proof, let \((Y,\ell,\eta)=(X_s,0,0)\) or
\((X_\infty,\ell_\infty,\gamma)\). We first prove a uniform
Hardy--Poincar\'e estimate:
\textit{
For every \(v\in H(Y)\),
\begin{equation}
\label{eq:uniform-hardy-poincare}
\|\varrho^{-\frac n2}v\|_{L^2(Y)}
+
\|\varrho^{1-\frac n2}\mathrm dv\|_{L^2(Y)}
\le C\|v\|_{H(Y)},
\end{equation}
where \(C\) is independent of \(s\)}. 

The gradient term is contained in
the definition of \(\|v\|_{H(Y)}\), so it remains to estimate the first
term.

On \(\mathcal A_\infty\cap\{r_\infty\ge R_\infty+1\}\), the
zeroth-order term in \eqref{eq:weak-energy-inner-product} is comparable to
\(\varrho^{-n}|v|^2\). On each finite neck
\(\mathcal N_{\Sigma_a,s}\), and on the corresponding cylindrical end
\(\mathcal N_{\Sigma_a,\infty}\), the monodromy of
\(\mathcal I\) gives the circlewise estimate
\[
\int_{S^1_\phi}|v|^2\,\mathrm d\phi
\le C\int_{S^1_\phi}|\partial_\phi v|^2\,\mathrm d\phi,
\]
because an \(\mathcal I\)-valued section is anti-periodic in \(\phi\).
Integrating over the remaining variables controls all the necks with a
constant independent of their length. The same estimate controls each
boundary region \(\mathcal B_{\Sigma_a}\). Indeed, on this region,
\[
\mathrm d\mu_{g_s}
=
\varepsilon^{n-2}\tilde r\,\mathrm dr\,\mathrm d\phi\,
\mathrm d\mu_{h_{\Sigma_a}},
\qquad
\tilde r^{-2}|\partial_\phi v|^2\,\mathrm d\mu_{g_s}
=
\varepsilon^{n-2}\tilde r^{-1}|\partial_\phi v|^2\,
\mathrm dr\,\mathrm d\phi\,\mathrm d\mu_{h_{\Sigma_a}}.
\]
Since \(0<\tilde r\le1\), the circlewise estimate gives
\[
\int_{\mathcal B_{\Sigma_a}}|v|^2\,\mathrm d\mu_{g_s}
\le
C\int_{\mathcal B_{\Sigma_a}}|\mathrm dv|_{g_s}^2\,
\mathrm d\mu_{g_s}.
\]
Because \(\varrho\) is a non-zero constant on \(\mathcal B_{\Sigma_a}\), this is precisely the
required weighted \(L^2\)-estimate there.

It remains to consider \(\mathcal C_{\Sigma,s}\) (or \(\mathcal C_\Sigma\)
on \(X_\infty\)) and the bounded part of \(\mathcal A_\infty\). These regions can be identified with a fixed compact model and so the standard Poincar\'e inequality has a constant independent of \(s\). Combining the estimates on these
four regions proves \eqref{eq:uniform-hardy-poincare}.

For compactly supported smooth sections \(v,z\), set
\[
b(v,z):=
\int_{Y^\circ}
\langle\mathrm \varrho^{2-n-\delta} e^{-\eta\ell} \Delta (\varrho^\delta e^{\eta\ell} v), z\rangle_{\mathcal I}\,\mathrm d\mu_Y
=
\int_{Y^\circ}
\langle\mathrm d(\varrho^\delta e^{\eta\ell}\: v),\mathrm d(\varrho^{2-n-\delta} e^{-\eta\ell} \: z)\rangle_{g_Y}\,\mathrm d\mu_Y.
\]
Using the properties of $\varrho$ and the fact that the supports of \(\mathrm d\varrho\) and \(\mathrm d\ell\) are disjoint, one can show
\begin{align}
\label{eq:weak-bilinear-expansion}
b(v,z)
={}&\langle v,z\rangle_{H(Y)}
+(2-n-2\delta)
\int_Y\varrho^{2-n}z
\langle\mathrm d\log\varrho,\mathrm dv\rangle\,\mathrm d\mu_Y
\notag\\
&-2\eta
\int_Y\varrho^{2-n}z
\langle\mathrm d\ell,\mathrm dv\rangle\,\mathrm d\mu_Y
+\int_Y\varrho^{2-n}
\bigl[
\eta\Delta_Y\ell-\eta^2|\mathrm d\ell|^2
\bigr]
\langle v,z\rangle_{\mathcal I}\,\mathrm d\mu_Y.
\end{align}
By construction, \(\mathrm d\ell\) and \(\Delta_Y\ell\) are uniformly bounded,
and \(\Delta_Y\ell_\infty\) is supported in fixed cylindrical collars.
Thus, \eqref{eq:uniform-hardy-poincare} and Cauchy--Schwarz imply
\[
|b(v,z)-\langle v,z\rangle_{H(Y)}|
\le
C\bigl(|\delta-\delta_*|+|\eta|+\eta^2\bigr)
\|v\|_{H(Y)}\|z\|_{H(Y)}.
\]
After decreasing \(\epsilon\), the form \(b\) extends to
\(H(Y)\times H(Y)\) and satisfies
\begin{equation}
\label{eq:weak-coercivity}
b(v,v)\ge\frac12\|v\|_{H(Y)}^2.
\end{equation}

Define
\[
F_f(z):=\int_Y \varrho^{2-n-\delta} e^{-\eta\ell} \langle f,z\rangle_{\mathcal I}\,\mathrm d\mu_Y.
\]
By \eqref{eq:uniform-hardy-poincare},
\[
|F_f(z)|
\le
C\|\varrho^{2-\frac n2-\delta}e^{-\eta\ell}f\|_{L^2(Y)}
\|z\|_{H(Y)}.
\]
The Lax--Milgram theorem gives a unique \(v\in H(Y)\) such that
\(b(v,z)=F_f(z)\) for every \(z\in H(Y)\), with
\[
\|v\|_{H(Y)}
\le
C\|\varrho^{2-\frac n2-\delta}e^{-\eta\ell}f\|_{L^2(Y)}.
\]
Set \(u = \varrho^\delta e^{\eta\ell}\: v\). Taking \(z=\varrho^{\frac n2 -2 +\delta}e^{\eta\ell} \: \psi\) for
\(\psi\in C_c^\infty(Y^\circ;\mathcal I)\) gives
\eqref{eq:weak-solution}, and hence \(u\) is a weak solution. Conversely,
every weak solution in the stated energy class satisfies the same
variational identity, so uniqueness follows from
\eqref{eq:weak-coercivity}. The estimate above is
\eqref{eq:uniform-weak-inverse-estimate}.
\end{proof}

\begin{proposition}
\label{prop:uniform-invertibility}
Let $k\in\mathbb N$ and $\beta\in(0,1/2)$.
There exists $\epsilon>0$ such that, for every $\delta,\gamma$ satisfying
$|\delta-\delta_*|<\epsilon$ and $|\gamma|<\epsilon$, and every
$s\ge1$, the operators
\begin{equation}
\begin{aligned}
        \Delta_s&:
        E^{k+2,\beta}_{\delta}(X_s)
        \longrightarrow
        \mathcal{D}^{k,\beta}_{\delta-2}(X_s),\\
        \Delta_\infty&:
        E^{k+2,\beta}_{\delta,\gamma}(X_\infty)
        \longrightarrow
        \mathcal{D}^{k,\beta}_{\delta-2,\gamma}(X_\infty)
\end{aligned}
\end{equation}
are isomorphisms. Moreover, for each such $\delta$ and $\gamma$, there
is a constant $C>0$, independent of $s$, such that
\begin{equation}
\label{eq:uniform-holder-inverse-estimate}
\begin{aligned}
        \|u\|_{E^{k+2,\beta}_{\delta}(X_s)}
        &\le
        C\|\Delta_su\|_{\mathcal{D}^{k,\beta}_{\delta-2}(X_s)},\\
        \|u\|_{E^{k+2,\beta}_{\delta,\gamma}(X_\infty)}
        &\le
        C\|\Delta_\infty u\|_{
        \mathcal{D}^{k,\beta}_{\delta-2,\gamma}(X_\infty)}.
\end{aligned}
\end{equation}
\end{proposition}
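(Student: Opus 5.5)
The plan is to upgrade the weak $L^2$ theory of Lemma~\ref{lemma:weak-invertibility} to the H\"older setting by combining local Schauder estimates with a uniform $C^0$ bound obtained by contradiction and blow-up. There are four steps, carried out in this order: injectivity, a uniform a priori estimate, surjectivity, and the isomorphism statement with uniform constants.

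\textbf{Injectivity.} Let $u\in E^{k+2,\beta}_{\delta}(X_s)$ (resp.\ $E^{k+2,\beta}_{\delta,\gamma}(X_\infty)$) satisfy $\Delta u=0$. Near $\Sigma$, membership in Donaldson's space forces $u=O(r^{1/2})$ and $\tilde r\,\partial_r u=O(r^{1/2})$. On the Euclidean end, $u=O(r_\infty^{\delta})$ with $\delta$ close to $\delta_*=(2-n)/2$. On a cylindrical end, $u=O(e^{\gamma t})$. We will check that these decay rates put $\varrho^{-\delta'}e^{-\gamma'\ell}u$ in $H(Y)$ for a slightly shifted pair of weights $(\delta',\gamma')$ that is still in the admissible range. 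Uniqueness in Lemma~\ref{lemma:weak-invertibility} then gives $u=0$.

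A cleaner route, which we would use where possible, is a direct integration by parts. For the weights at hand, $\varrho^{2-n}u\,du$ has vanishing boundary flux. On the cylindrical ends one uses the antiperiodic Poincar\'e inequality $\int|\partial_\phi u|^2\ge\frac14\int|u|^2$, which forbids solutions with small exponential growth or decay rate: the smallest indicial root there is $\pm\tfrac12$.

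\textbf{Uniform a priori estimate.} We aim to prove
\[
\|u\|_{\mathcal D^{k+2,\beta}_\delta}\le C\bigl(\|\Delta u\|_{\mathcal D^{k,\beta}_{\delta-2}}+\|u\|_{\mathcal D^0_\delta}\bigr)
\]
with $C$ independent of $s$. This is local Schauder theory applied region by region:
\begin{itemize}
\item near $\Sigma$, Donaldson's estimate in the model $dr^2+r^2d\phi^2+\varepsilon^2h_\Sigma$ \cite{Donaldson2021};
\item on the neck and in the transition region, standard estimates on unit balls of a fixed bounded geometry, which does not depend on $s$;
\item on $\mathcal A_\infty$, rescaled Schauder estimates on annuli, as in \cite{McOwen1979}.
\end{itemize}
The graph-norm part of $E$ is bounded trivially by $\|\Delta u\|$.

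Next we remove the $\|u\|_{\mathcal D^0_\delta}$ term by contradiction. Suppose there are $s_i$ and $u_i$ with $\|u_i\|_{\mathcal D^0_\delta}=1$ and $\|\Delta u_i\|\to0$. Choose points $x_i$ where $\rho_\infty^{-\delta}|u_i|\ge\frac12$, and pass to a limit in one of three ways.
\begin{itemize}
\item If $x_i$ stays in a bounded part of $\mathcal C_\Sigma\cup\mathcal A_\infty$ or in a boundary region, extract a limit on $X_\infty$ or on $X_{s_\infty}$. This limit is a nonzero harmonic element of the weighted space, contradicting injectivity. On $X_\infty$ we use $\gamma=0$, which is admissible.
\item If $x_i$ lies deep in a neck, translate it to $t=0$. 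The limit is a bounded antiperiodic harmonic function on $\RR\times S^1\times\Sigma_a$, which must vanish by Fourier decomposition in $\phi$: all modes have rate at least $1/2$, so a bounded solution is zero.
\item If $|x_i|\to\infty$, rescale by $|x_i|$. The limit is harmonic on $\RR^n\setminus\{0\}$ and bounded by $|x|^{\delta}$. Since $\delta$ is not an indicial root, this forces the limit to be zero.
\end{itemize}
Arzel\`a--Ascoli and the Schauder bounds provide convergence in each case. Each case is a contradiction, so the estimate holds with a constant uniform in $s$. On $X_\infty$ the weight $e^{-\gamma t}$ is handled by conjugation: $e^{-\gamma t}\Delta e^{\gamma t}$ differs from $\Delta$ by a term of order $O(\gamma)$.

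\textbf{Surjectivity.} Given $f\in\mathcal D^{k,\beta}_{\delta-2}$, first check that $\varrho^{2-\frac n2-\delta'}f\in L^2$ for some $\delta'$ slightly larger than $\delta$; on $X_\infty$, also shift $\gamma$. Lemma~\ref{lemma:weak-invertibility} then gives a weak solution, and the local regularity above shows it lies in $\mathcal D^{k+2,\beta}_{\delta'}$. We still need the sharp weight $\delta$. One option is to approximate $f$ by compactly supported $f_j$ and use the a priori estimate to pass to the limit. The other is to do a $C^0$ barrier comparison with $\varrho^\delta$, exploiting $\Delta\varrho^\delta>0$.

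\textbf{Main obstacle.} The key difficulty is the blow-up argument: making it uniform in $s$, and treating the neck limit correctly. There the absence of bounded harmonic functions depends on the antiperiodicity of $\mathcal I$, which removes the zero Fourier mode. That is exactly why the constant does not degenerate as $s\to\infty$.
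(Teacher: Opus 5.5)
Your overall architecture matches the paper's. Injectivity comes from the weak uniqueness in Lemma~\ref{lemma:weak-invertibility} after a small shift of weights. The uniform estimate comes from regionwise Schauder estimates with a zeroth-order term, which is then removed by a blow-up contradiction.

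Your surjectivity step is a genuinely different route, and it works. The paper splits off a McOwen solution $u_{\mathrm E}$ on the Euclidean end and solves a compactly supported remainder weakly. It then reads off the sharp weight from the spherical-harmonic and $\Delta_{Y_a}$-eigenfunction expansions, without using the uniform estimate. You instead cut off $f$ and pass to a local limit using the a priori estimate. For this you should record two facts. First, for compactly supported data the weak solution is $O(r_\infty^{2-n})$, and $O(e^{t/2})$ on the cylindrical ends of $X_\infty$, so it lies in $E$ and the estimate applies. Second, the convergence is only local, since compactly supported sections are not dense in the weighted H\"older norm.

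The genuine gap is in the blow-up case analysis. Suppose $s_i\to\infty$ and $x_i$ lies in a boundary region $\mathcal B_{\Sigma_a}$, or in the neck at bounded distance $t_i+s_i$ from it. Then the pointed limit of $X_{s_i}$ is not $X_\infty$, which contains no branching set. Nor is it some $X_{s_\infty}$. It is the capped half-cylinder $\mathcal B_{\Sigma_a}\cup\bigl([0,\infty)\times Y_a\bigr)$ with $\tau=t+s_i$. Your injectivity step says nothing about this space, so the claimed contradiction does not follow.

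You need a separate Liouville statement there, which is the paper's Case 3. The limit $u_{\mathcal B}$ is bounded and harmonic, and it is $O(r^{1/2})$ at $\Sigma_a$ by Donaldson's regularity. Boundedness on the half-cylinder plus the antiperiodic spectral gap leaves only the modes $e^{-\sqrt{\lambda}\tau}$ with $\sqrt{\lambda}\ge 1/2$, so $u_{\mathcal B}$ decays exponentially. Now integrate by parts. The boundary term near $\Sigma_a$ is $O(r)$ because $r\,u\,\partial_r u=O(r)$, and the term at $\tau\to\infty$ vanishes by the exponential decay. Hence $\int|\mathrm d u_{\mathcal B}|^2=0$, and the nontrivial monodromy forces $u_{\mathcal B}=0$. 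This is exactly where the neck degeneration meets the branching locus, so it cannot be absorbed into your $X_\infty$ case or your pure-neck case.
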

\begin{proof}
Let $\epsilon_0$ be the constant in Lemma
\ref{lemma:weak-invertibility}. For each $a$, let $\lambda_a>0$ be the
first eigenvalue of the $\mathcal I$-valued Laplacian on
$S^1_\phi\times\Sigma_a$. The anti-periodicity in the
$S^1_\phi$-direction gives $\lambda_a\ge1/4$. Shrinking $\epsilon$ if
necessary, assume that
\[
        [\delta_*-2\epsilon,\delta_*+2\epsilon]\subset(2-n,0),
        \qquad
        2\epsilon<\epsilon_0,
        \qquad
        \epsilon<\min_a\sqrt{\lambda_a}.
\]

\noindent\textbf{Range:} We first prove surjectivity. Let
$f\in\mathcal{D}^{k,\beta}_{\delta-2}(X_s)$. The Euclidean weighted
H\"older isomorphism for $2-n<\delta<0$, obtained as in
\cite{McOwen1979}, gives, after a bounded extension of
$f|_{\mathcal A_\infty}$ to $\mathbb R^n$, a section $u_{\mathrm E}$
such that $\Delta_{\mathrm{Eucl}}u_{\mathrm E}=f$ sufficiently far out.
If $\chi_{\mathrm E}$ is a fixed cutoff equal to one near infinity, then
$g:=f-\Delta_s(\chi_{\mathrm E}u_{\mathrm E})$ is compactly supported.
Lemma \ref{lemma:weak-invertibility}, applied at weight $\delta$,
gives a weak solution $w$ of $\Delta_sw=g$. Donaldson's local
regularity \cite[Proposition 2.7]{Donaldson2021} and ordinary elliptic
regularity give the required H\"older regularity. On the Euclidean end,
since $\varrho^{-\delta}w\in H(X_s)$, the spherical-harmonic expansion
of $w$ contains no nondecaying modes. Hence,
$w=O(r_\infty^{2-n})$, and
$u=\chi_{\mathrm E}u_{\mathrm E}+w$ belongs to
$E^{k+2,\beta}_\delta(X_s)$ and satisfies $\Delta_su=f$.

Now let
$f\in\mathcal D^{k,\beta}_{\delta-2,\gamma}(X_\infty)$.
Choose $\delta<\widehat\delta<\delta_*+2\epsilon,\;- \epsilon<\widehat\gamma<\gamma. $ The strict inequalities and the prescribed decay of $f$ imply that
\[
\varrho^{2-\frac n2-\widehat\delta}
e^{-\widehat\gamma\ell_\infty}f\in L^2(X_\infty).
\]
Lemma \ref{lemma:weak-invertibility} therefore gives a weak solution
$u$ of $\Delta_\infty u=f$ satisfying
$\varrho^{-\widehat\delta}e^{-\widehat\gamma\ell_\infty}u
\in H(X_\infty)$. Local elliptic regularity applies to $u$ on each end.

Let $u_{\mathrm E}$ be the Euclidean solution constructed above from
$f|_{\mathcal A_\infty}$. The difference $u-u_{\mathrm E}$ is harmonic
sufficiently far out. Since $\delta<\widehat\delta$ and
$\varrho^{-\widehat\delta}e^{-\widehat\gamma\ell_\infty}u
\in H(X_\infty)$, the Euclidean estimate for $u_{\mathrm E}$ gives
\[
\varrho^{-\widehat\delta}e^{-\widehat\gamma\ell_\infty}
\bigl(u-\chi_{\mathrm E}u_{\mathrm E}\bigr)\in H(X_\infty).
\]
The spherical-harmonic expansion of $u-u_{\mathrm E}$ therefore contains
no nondecaying modes. Thus $u-u_{\mathrm E}=O(r_\infty^{2-n})$, so $u$
has Euclidean weight $\delta$.

On the $a$-th cylindrical end, write
$Y_a=S^1_\phi\times\Sigma_a$ and expand $u$ and $f$ in eigenfunctions of
$\Delta_{Y_a}$. If $\lambda\ge\lambda_a$ is an eigenvalue, the
corresponding coefficients satisfy
\[
        -u_\lambda''+\lambda u_\lambda=f_\lambda.
\]
Fix $T<0$ sufficiently negative and set $\kappa:=\sqrt\lambda$. Since
$f_\lambda=O(e^{\gamma t})$ and
$\gamma<\sqrt{\lambda_a}\le\kappa$, for $t\le T$, define
\[
v_\lambda(t)
:=
\frac{1}{2\kappa}
\left(
e^{-\kappa t}\int_{-\infty}^{t}
e^{\kappa\tau}f_\lambda(\tau)\,\mathrm d\tau
+
e^{\kappa t}\int_t^T
e^{-\kappa\tau}f_\lambda(\tau)\,\mathrm d\tau
\right).
\]
Then $-v_\lambda''+\lambda v_\lambda=f_\lambda$ and
$v_\lambda=O(e^{\gamma t})$. The difference is
\[
        u_\lambda-v_\lambda
        =A_\lambda e^{\kappa t}
        +B_\lambda e^{-\kappa t}.
\]
Since $\widehat\gamma<\gamma$, $v_\lambda=O(e^{\gamma t})$, and
$\varrho^{-\widehat\delta}e^{-\widehat\gamma\ell_\infty}u
\in H(X_\infty)$, we have
\[
e^{-\widehat\gamma t}(u_\lambda-v_\lambda)
\in H^1((-\infty,T])
\]
The term
$e^{-\widehat\gamma t}B_\lambda e^{-\kappa t}$ is not square
integrable unless $B_\lambda=0$. Moreover,
$e^{\kappa t}=O(e^{\gamma t})$ as $t\to-\infty$. Hence
$u=O(e^{\gamma t})$ on every cylindrical end. The corresponding
weighted estimates, followed by Schauder estimates on translated unit
cylinders, give
$u\in E^{k+2,\beta}_{\delta,\gamma}(X_\infty)$.

\medskip
\noindent\textbf{Kernel:} For injectivity, choose
$\widehat\delta\in(\delta,\delta_*+2\epsilon)$. By \cite[(2.9)]{Donaldson2021}, $u\sim O(r^{\frac12})$ near $\Sigma$. We conclude that for every harmonic
$u\in E^{k+2,\beta}_\delta(X_s)$,
$\varrho^{-\widehat\delta}u\in H(X_s)$. Weak uniqueness in Lemma
\ref{lemma:weak-invertibility} gives $u=0$. The same argument applies on
$X_\infty$: since $u=O(e^{\gamma t})$,
$\varrho^{-\widehat\delta}u\in H(X_\infty)$ at cylindrical weight $0$,
so weak uniqueness again gives $u=0$.

\medskip
\noindent\textbf{Estimates}:
It remains to prove the uniform estimate on $X_s$. On the Euclidean end,
the weighted Schauder estimate gives
\[
\|u\|_{\mathcal D^{k+2,\beta}_\delta(\mathcal A_\infty)}
\le C\left(
\|\Delta_{\mathrm{Eucl}}u\|_{
\mathcal D^{k,\beta}_{\delta-2}(\mathcal A_\infty)}
+\|\rho_\infty^{-\delta}u\|_{\mathcal C^0(\mathcal A_\infty)}
\right).
\]
Let $Y_a=S^1_\phi\times\Sigma_a$. On the interior of
$\mathcal N_{\Sigma_a,s}=[-s,0]\times Y_a$, we have
$\Delta_s=-\partial_t^2+\Delta_{Y_a}$. Hence, for every $j\in\mathbb Z$
such that $[j-2,j+2]\subset[-s,0]$,
\[
\begin{aligned}
\|u\|_{\mathcal C^{k+2,\beta}([j-1,j+1]\times Y_a)}
\le C\bigl(&
\|\Delta_su\|_{\mathcal C^{k,\beta}([j-2,j+2]\times Y_a)}+\|u\|_{\mathcal C^0([j-2,j+2]\times Y_a)}
\bigr).
\end{aligned}
\]
The constant is independent of $j$ and $s$. Near
$\mathcal B_{\Sigma_a}$ we use
\cite[Proposition 2.2 and (2.4)]{Donaldson2021}, while pullback by
$\Phi_s$ reduces the estimate on $\mathcal C_{\Sigma,s}$ to the ordinary
Schauder estimate on the fixed model $\mathcal C_\Sigma$. Combining these regional estimates gives
\begin{equation}
\label{eq:uniform-schauder-with-zero-order}
        \|u\|_{\mathcal D^{k+2,\beta}_\delta(X_s)}
        \le C\left(
        \|\Delta_su\|_{\mathcal D^{k,\beta}_{\delta-2}(X_s)}
        +\|u\|_{\mathcal D^0_\delta(X_s)}
        \right),
\end{equation}
where $C$ is independent of $s$. We
claim that
\begin{equation}
\label{eq:uniform-zero-order-estimate}
        \|u\|_{\mathcal D^0_\delta(X_s)}
        \le C
        \|\Delta_su\|_{\mathcal D^{0,\beta}_{\delta-2}(X_s)}
\end{equation}
with the same uniformity.

Otherwise, there are $s_i\ge1$ and $u_i$ such that
\[
        \|u_i\|_{\mathcal D^0_\delta(X_{s_i})}=1,
        \qquad
        \|\Delta_{s_i}u_i\|_{
        \mathcal D^{0,\beta}_{\delta-2}(X_{s_i})}\longrightarrow0.
\]
We may assume $s_i\to\infty$. Choose $x_i\in X_{s_i}$ such that
\[
\rho_\infty(x_i)^{-\delta}|u_i(x_i)|\ge\frac12.
\]
Fix $0<\beta'<\beta$ and pass to a subsequence whenever necessary.

\medskip
\noindent\textit{Case 1:}
$x_i\in\mathcal A_\infty$ and
$R_i:=r_\infty(x_i)\to\infty$.
Set
\[
\widetilde u_i(y):=R_i^{-\delta}u_i(R_i y).
\]
After passing to a subsequence, $x_i/R_i\to y_\infty\in S^{n-1}$ and
\[
\widetilde u_i\longrightarrow\widetilde u
\quad\text{in}\quad
\mathcal C^{2,\beta'}_{\mathrm{loc}}(\RR^n\setminus\{0\}),
\qquad
|\widetilde u(y_\infty)|\ge\frac12.
\]
Moreover, $|\widetilde u(y)|\le |y|^\delta$. If $Y_j$ is a spherical
harmonic of degree $j$, neither $r^jY_j$ nor $r^{2-n-j}Y_j$ satisfies
this bound at both $0$ and infinity when $2-n<\delta<0$. Thus
$\widetilde u=0$, a contradiction.

After passing to a subsequence, fix $a$ whenever $x_i$ lies in a
boundary or neck region. When
$x_i\in\mathcal N_{\Sigma_a,s_i}=[-s_i,0]\times Y_a$, write
$x_i=(t_i,z_i)$.

\medskip
\noindent\textit{Case 2:}
For some $L>0$,
\[
x_i\in
\bigl(\mathcal A_\infty\cap B_L(0)\bigr)
\cup\mathcal C_{\Sigma,s_i}
\cup\bigl([-L,0]\times Y_a\bigr).
\]
Then
\[
u_i\longrightarrow u_\infty
\quad\text{in}\quad
\mathcal C^{2,\beta'}_{\mathrm{loc}}(X_\infty),
\]
where $u_\infty$ is nonzero, bounded, and harmonic. The estimate
$|u_i|\le\rho_\infty^\delta$ passes to the limit. The
spherical-harmonic expansion on the Euclidean end and the
$\Delta_{Y_a}$-eigenfunction expansion on each cylindrical end give,
for some $\sigma>0$,
\[
u_\infty=O(r_\infty^{2-n})
\quad\text{on }\mathcal A_\infty,
\qquad
u_\infty=O(e^{\sigma t})
\quad\text{on }\mathcal N_{\Sigma_a,\infty}.
\]
These decay estimates imply
$\varrho^{-\delta_*}u_\infty\in H(X_\infty)$. Applying Lemma
\ref{lemma:weak-invertibility} with $f=0$ and
$(\delta,\gamma)=(\delta_*,0)$ gives $u_\infty=0$, contradicting the
choice of $u_\infty$.

\medskip
\noindent\textit{Case 3:}
For some $L>0$,
\[
x_i\in
\mathcal B_{\Sigma_a}
\cup\bigl([-s_i,-s_i+L]\times Y_a\bigr).
\]
Set $\tau=t+s_i$ on $\mathcal N_{\Sigma_a,s_i}$. Then $u_i$ converges
to a nonzero bounded harmonic section $u_{\mathcal B}$ with
\[
u_i\longrightarrow u_{\mathcal B}
\quad\text{in}\quad
\mathcal D^{2,\beta'}(\mathcal B_{\Sigma_a})
\quad\text{and}\quad
\mathcal C^{2,\beta'}_{\mathrm{loc}}([0,\infty)\times Y_a).
\]
The spectral gap gives $u_{\mathcal B}=O(e^{-\sigma\tau})$ for some
$\sigma>0$. Since $u_{\mathcal B}=O(r^{1/2})$ at $\Sigma_a$, integration
by parts gives $\int_{\mathcal B_{\Sigma_a}\cup([0,\infty)\times Y_a)}
|\mathrm du_{\mathcal B}|^2=0.$ As $\mathcal{I}$ has nontrivial monodromy, we conclude that $u_{\mathcal{B}}=0$, a contradiction.

\medskip
\noindent\textit{Case 4:}
\[
x_i=(t_i,z_i)\in\mathcal N_{\Sigma_a,s_i},
\qquad
-t_i\longrightarrow\infty,
\qquad
t_i+s_i\longrightarrow\infty.
\]
Here $-t_i$ and $t_i+s_i$ are the distances from $x_i$ to the two
ends of the neck. Define
\[
\widehat u_i(\tau,z):=u_i(t_i+\tau,z),
\qquad
\tau\in[-(t_i+s_i),-t_i].
\]
Then $\widehat u_i$ converges in
$\mathcal C^{2,\beta'}_{\mathrm{loc}}(\mathbb R\times Y_a)$ to a
nonzero bounded harmonic section $u_{\mathrm{cyl}}$. Since every
eigenvalue $\lambda$ of $\Delta_{Y_a}$ is positive,
\[
u_{\mathrm{cyl}}(\tau,z)
=\sum_\lambda
\left(A_\lambda e^{\sqrt\lambda\tau}
+B_\lambda e^{-\sqrt\lambda\tau}\right)\phi_\lambda(z).
\]
Boundedness gives $A_\lambda=B_\lambda=0$ for every $\lambda$, a
contradiction. This proves \eqref{eq:uniform-zero-order-estimate};
\eqref{eq:uniform-holder-inverse-estimate} now follows from
\eqref{eq:uniform-schauder-with-zero-order}.
\end{proof}

\section{Asymptotic expansions for polynomially growing harmonic functions}
\label{sec:asymptotic-expansions}
In this section, we study harmonic sections of $\mathcal{I}$ with prescribed polynomial asymptotics on the Euclidean end. We repeat the analysis from \cite{salm2024construction} and study the Fourier modes on the necks and their dependence on the parameter $s$. Like in \cite{salm2024construction}, we also relate these Fourier modes to the asymptotic expansion given in \cite{Donaldson2021}.

\subsection{Harmonic sections with polynomial growth.}
\label{subsec:polynomial-asymptotics}
In this subsection, we study harmonic sections of $\mathcal{I}$ with polynomial growth.
Let
$$
    \mathcal H_{\mathrm{poly}}(\mathbb R^n)
    :=
    \{P\in\mathbb R[x_1,\ldots,x_n]:
    \Delta_{\mathrm{Eucl}}P=0\}
$$
denote the space of harmonic polynomials on $\RR^n$.
We claim that for every $P \in \mathcal{H}_{\mathrm{poly}}$, there exists a harmonic section $u_{P,s} \in \Gamma(X_s,\mathcal{I})$ such that $P-u_{P,s}$ decays on $\mathcal{A}_\infty$. Indeed, let $\chi_\infty$ be a smooth step function supported on $\mathcal{A}_\infty$ that equals $1$ when $r_\infty \gg 0$. Notice that $\Delta_s (\chi_\infty P)$ is compactly supported. Therefore, there exists a $v \in E^{k+2, \beta}_{\delta}(X_s)$ such that $\Delta_s v = - \Delta(\chi_\infty P)$. Setting
$u_{P,s} :=v + \chi_\infty P$ and repeating the same argument on $X_\infty$ gives the claimed harmonic sections. This existence result is due to \cite{Sun2022Z2Pell} and \cite{Newexamples}.
\begin{proposition}
    \label{prop:existence-harmonic-sections-with-polynomial-growth}
    Let $k\in\mathbb N$, $\beta\in(0,\frac{1}{2})$, $\delta = \frac{2-n}{2}$, and $0<\gamma\ll 1$.  For every $P\in\mathcal H_{\mathrm{poly}}(\mathbb R^n)$, the following hold.
\begin{enumerate}
\item[{\rm (i)}]
For every finite neck length $s$, there is a unique $\mathcal I$-valued
harmonic section $u_{P,s}$ on $X_s$ such that $
        \chi_\infty P-u_{P,s}
        \in E^{k+2,\beta}_\delta(X_s;\mathcal I)$.
        The assignment $P\mapsto u_{P,s}$ is real linear.

\item[{\rm (ii)}] Similarly, there is a unique $\mathcal I$-valued harmonic section $u_{P,\infty}$ on $X_\infty$ such that $\chi_\infty P-u_{P,\infty}
        \in E^{k+2,\beta}_{\delta,\gamma}(X_\infty;\mathcal I).$
The assignment $P\mapsto u_{P,\infty}$ is real linear.
\end{enumerate}
\end{proposition}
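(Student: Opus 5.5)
The plan is to reduce existence and uniqueness to Proposition~\ref{prop:uniform-invertibility} at the weight $\delta=\frac{2-n}{2}=\delta_*$. Throughout, $P$ is regarded as an $\mathcal I$-valued section on $\mathcal A_\infty$. This makes sense because $\mathcal A_\infty$ is simply connected for $n\ge3$, and its complement of the stretched tubular neighbourhood avoids $\Sigma$; so we fix once and for all a trivialization of $\mathcal I$ over $\mathcal A_\infty$. Since $P$ is harmonic and $\chi_\infty\equiv1$ for $r_\infty\gg0$, the section
\[
f_P:=-\Delta_s(\chi_\infty P)=-\bigl(\Delta_{\mathrm{Eucl}}\chi_\infty\bigr)P-2\langle \mathrm d\chi_\infty,\mathrm dP\rangle
\]
is smooth and supported in a fixed compact annulus in $\mathcal A_\infty$. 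There $g_s=g_{\mathrm{Eucl}}$, so $f_P$ is independent of $s$ and is the same section on $X_\infty$. In particular $f_P\in\mathcal D^{k,\beta}_{\delta-2}(X_s)$ and $f_P\in\mathcal D^{k,\beta}_{\delta-2,\gamma}(X_\infty)$ for every weight.

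\emph{Existence.} For (i), Proposition~\ref{prop:uniform-invertibility} gives a unique $v\in E^{k+2,\beta}_\delta(X_s)$ with $\Delta_s v=f_P$. Set $u_{P,s}:=\chi_\infty P+v$. Then $\Delta_s u_{P,s}=0$ and $\chi_\infty P-u_{P,s}=-v\in E^{k+2,\beta}_\delta$. The $E$-space includes the condition $\Delta v\in\mathcal D^{k,\beta}_{\delta-2}$, and this holds since $f_P$ is compactly supported and smooth. Near $\Sigma$, $u_{P,s}=v$ is smooth away from $\Sigma$ with Donaldson regularity, by the regularity already built into the proof of Proposition~\ref{prop:uniform-invertibility}. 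For (ii), use the $X_\infty$ isomorphism with the chosen $0<\gamma<\epsilon$; the same formula produces $u_{P,\infty}$.

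\emph{Uniqueness and linearity.} Suppose $u,u'$ are two harmonic sections with $\chi_\infty P-u$ and $\chi_\infty P-u'$ in the $E$-space. Then $u-u'$ is a harmonic element of $E^{k+2,\beta}_\delta(X_s)$, or of $E^{k+2,\beta}_{\delta,\gamma}(X_\infty)$ in the second case. Injectivity in Proposition~\ref{prop:uniform-invertibility} forces $u=u'$. Linearity then follows. The map $P\mapsto f_P$ is linear, and so is the inverse $\Delta^{-1}$, so $P\mapsto \chi_\infty P+\Delta^{-1}f_P$ is linear. Alternatively, $u_{aP+bQ}$ and $a u_P+b u_Q$ both satisfy the defining property, and uniqueness identifies them.

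\emph{Main obstacle.} The only delicate point is checking that the weight $\delta=\delta_*$ and the chosen $\gamma$ lie in the range where Proposition~\ref{prop:uniform-invertibility} applies. They do, since $|\delta-\delta_*|=0<\epsilon$ and we take $0<\gamma<\epsilon$. One should also check that the fixed sign convention for the trivialization of $\mathcal I$ on $\mathcal A_\infty$ makes $\chi_\infty P$ a well-defined global section, which requires extending it by zero inside $B_{R_\infty}$. Finally, the uniqueness statement is relative to this fixed trivialization, since changing the sign replaces $u_{P,s}$ by $u_{-P,s}$.
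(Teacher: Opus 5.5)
Your proposal is correct and follows the paper's argument: cut off $P$ on the Euclidean end, observe that $\Delta_s(\chi_\infty P)$ is compactly supported, invert it using Proposition~\ref{prop:uniform-invertibility} at weight $\delta_*$, add back $\chi_\infty P$, and repeat on $X_\infty$; uniqueness and linearity then follow from injectivity, which is the part the paper leaves implicit. One harmless slip: the paper's Laplacian is $\Delta=\mathrm d^*\mathrm d$, so the cross term in $f_P$ should be $+2\langle \mathrm d\chi_\infty,\mathrm dP\rangle$, but this does not matter since the argument only uses that $f_P$ is smooth and compactly supported.
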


Given $u_{P,s}$ and $u_{P,\infty}$, we will consider their Fourier modes and fix the coefficient conventions used throughout this paper:
First, rescale the metrics on the components of $\Sigma$, so that $\operatorname{Vol}_{h_\Sigma}(\Sigma_a)=1$, for $a=1,\ldots,p$.
Let $\{\psi_{a,j}\}_{j\ge0}$ be an \(L^2\)-orthonormal eigenbasis of
$\Delta_{h_{\Sigma_a}}$, with
\[
        \Delta_{h_{\Sigma_a}}\psi_{a,j}
        =
        \mu_{a,j}\psi_{a,j},
        \qquad
        0=\mu_{a,0}<\mu_{a,1}\le\mu_{a,2}\le\cdots,
\]
with normalization $\psi_{a,0}=1$. We also set $\mu_1:=\min_{1\le a\le p}\mu_{a,1}$. 
On the neck and boundary regions  $\mathcal B_{\Sigma_a} \cup \mathcal N_{\Sigma_a, s}$, $u_{P,s}$ can uniquely be written as
$$
u_{P,s} = \sum_{q \in \ZZ + \frac{1}{2},\: j \ge 0} u_{q,a,j}(r) e^{i q \phi} \psi_{a,j}
$$
and $u_{q,a,j}(r)$ must satisfy the equation
\begin{equation}
\label{eq:radial-ode}
        \frac1{\tilde r(r)}
        \frac{d}{dr}
        \left(
        \tilde r(r)\frac{d u_{q,a,j}}{dr}
        \right)
        -
        \left(
        \frac{q^2}{\tilde r(r)^2}
        +
        \varepsilon^{-2}\mu_{a,j}
        \right)u_{q,a,j}
        =
        0.
\end{equation}
Let $I_{q,a,j}(r)$ be the unique solution to \eqref{eq:radial-ode}
that satisfies \(I_{q,a,j}(r)=r^{|q|}+O(r^{|q|+1})\) near $r = 0$. On the region where $\tilde r=r$, it agrees up to normalization with the regular solution of the modified Bessel equation. With this normalization we define the Fourier coefficients of $u_{P,s}$ as follows:
\begin{definition}
        \label{def:fourier-coefficients}
    The Fourier coefficients $c_{q, a, j}(P,s)$ are defined by the relationship
    $$
    u_{P,s} = \sum_{q \in \ZZ + \frac{1}{2},\: j \ge 0} c_{q,a,j}(P,s) \: I_{q, a,j}(r)\: e^{i q \phi} \:\psi_{a,j}
    $$
    on $\mathcal B_{\Sigma_a} \cup \mathcal N_{\Sigma_a, s}$.
\end{definition}

Since our setup is similar to that in \cite{salm2024construction}, we directly get a decay estimate for $c_{q,a,j}(P,s)$:
\begin{proposition}[{\cite[Lemma~2.5]{salm2024construction}}]
\label{prop:mode-estimate}
For every $\beta\in(0,\frac12)$, there is a constant
$C>0$, independent of $s,q,a,j$, such that for every $P\in\mathcal H_{\mathrm{poly}}(\mathbb R^n)$ and $s\ge1$
\begin{equation}
\label{eq:mode-estimate}
        |c_{q,a,j}(P,s)|
        \le
        \frac{C}{I_{q,a,j}(R_0)}
        e^{-s \sqrt{q^2 + \frac{\mu_{a,j}}{\varepsilon^2}}} \left\| P \right\|_{\mathcal{D}^{1,\beta}(\operatorname{Supp}(\d \chi_\infty))}.
\end{equation}
\end{proposition}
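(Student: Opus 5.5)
The plan is to reduce the estimate to a one-dimensional maximum-principle comparison along the neck, mode by mode. We control the size of $u_{P,s}$ on the outer end of the neck, $r=R_0+s$ (equivalently $t=0$), uniformly in $s$, and then propagate this bound inward through the radial ODE \eqref{eq:radial-ode}. Write $\kappa_{q,a,j}:=\sqrt{q^2+\varepsilon^{-2}\mu_{a,j}}$. On the neck $\tilde r\equiv1$, so \eqref{eq:radial-ode} becomes $u''=\kappa^2u$.

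\textbf{Step 1: a uniform bound on the outer end of the neck.} Decompose $u_{P,s}=\chi_\infty P-v_s$, where $\Delta_s v_s=\Delta(\chi_\infty P)$ is supported in $\operatorname{Supp}(\d\chi_\infty)\subset\mathcal A_\infty$. Since $\chi_\infty=0$ off $\mathcal A_\infty$, we have $u_{P,s}=-v_s$ near the neck. Proposition~\ref{prop:uniform-invertibility} gives
\[
\|v_s\|_{\mathcal D^{2,\beta}_\delta(X_s)}\le C\|\Delta(\chi_\infty P)\|_{\mathcal D^{0,\beta}_{\delta-2}}\le C\|P\|_{\mathcal D^{1,\beta}(\operatorname{Supp}(\d\chi_\infty))},
\]
with $C$ independent of $s$. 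Here the right-hand norm is really a $\mathcal C^{2,\beta}$ norm of $P$ on a fixed compact set. I expect the way around this is a standard interior estimate for harmonic $P$, or else to accept the norm written in the statement as shorthand. Since $\rho_\infty=1$ on the neck, this yields $\sup_{\{t=0\}}|u_{P,s}|\le C\|P\|$, and hence, by orthonormality, an $L^2$ bound on each Fourier coefficient on the slice $t=0$:
\[
|c_{q,a,j}(P,s)|\,I_{q,a,j}(R_0+s)\le C\|P\|.
\]

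\textbf{Step 2: growth of $I_{q,a,j}$ across the neck.} On $[R_0,R_0+s]$ the function $I=I_{q,a,j}$ solves $I''=\kappa^2 I$, so
\[
I(R_0+\tau)=I(R_0)\cosh(\kappa\tau)+I'(R_0)\kappa^{-1}\sinh(\kappa\tau).
\]
We need $I'(R_0)\ge0$. This follows from positivity and monotonicity of the regular solution on the boundary region: the ODE can be written as $(\tilde r I')'=\tilde r(q^2\tilde r^{-2}+\varepsilon^{-2}\mu)I$, and $I>0$ with $\tilde rI'\to0$ at $r=0$. Hence $I(R_0+s)\ge\tfrac12 I(R_0)e^{\kappa s}$. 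Combining this with Step 1 gives
\[
|c_{q,a,j}(P,s)|\le \frac{2C}{I_{q,a,j}(R_0)}\,e^{-s\kappa_{q,a,j}}\|P\|,
\]
which is exactly \eqref{eq:mode-estimate}.

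\textbf{Main obstacle.} The delicate points are the uniformity of the constant in Step 1 with respect to $s$ and to all modes simultaneously, and the monotonicity argument for $I_{q,a,j}$ across the region where $\tilde r$ transitions. The uniformity in $s$ is supplied by Proposition~\ref{prop:uniform-invertibility}. Uniformity in $(q,a,j)$ comes for free, because we only use the sup bound on the slice and $|c|\,I(R_0+s)\le\|u|_{t=0}\|_{L^2}$ holds uniformly in the mode. For the transition region, the first-order form of the ODE settles positivity of $I'$ directly, independent of any Bessel asymptotics.
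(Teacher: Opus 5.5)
Your proof is correct and is essentially the paper's argument. Both use the uniform $\mathcal C^0$ bound on the neck from Proposition~\ref{prop:uniform-invertibility}, project onto the mode at the outer slice $r=R_0+s$, and use the lower bound $I_{q,a,j}(R_0+s)\ge\frac12 I_{q,a,j}(R_0)e^{\kappa s}$, which follows from $I'_{q,a,j}(R_0)\ge0$. Your $\cosh/\sinh$ form is the paper's $b\in(-1,1)$ parametrization, and your first-order-ODE argument supplies the positivity and monotonicity of $I_{q,a,j}$ that the paper only asserts.

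Your worry about needing a $\mathcal C^{2,\beta}$ norm is unfounded. $P$ is Euclidean-harmonic on $\operatorname{Supp}(\chi_\infty)\subset\mathcal A_\infty$, so $\Delta_s(\chi_\infty P)$ involves only $P$ and $\d P$ multiplied by derivatives of $\chi_\infty$. Hence the $\mathcal D^{1,\beta}(\operatorname{Supp}(\d\chi_\infty))$ norm stated in the proposition suffices.
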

\begin{proof}
        Set $v_{P,s}:=u_{P,s}-\chi_\infty P$. By Proposition \ref{prop:existence-harmonic-sections-with-polynomial-growth},
        $v_{P,s}\in E^{2,\beta}_\delta(X_s)$ and
        $\Delta_s v_{P,s}=-\Delta_s(\chi_\infty P)$. Since $\chi_\infty$ vanishes on $\mathcal N_{\Sigma_a,s}$, Proposition \ref{prop:uniform-invertibility} gives
        \begin{align}
                \label{eq:estimate_we-need-later}
        \|u_{P,s}\|_{\mathcal C^0(\mathcal N_{\Sigma_a,s})}=\|v_{P,s}\|_{\mathcal C^0(\mathcal N_{\Sigma_a,s})}
        \le& C\|v_{P,s}\|_{E^{2,\beta}_\delta(X_s)} \\
        \le& C\|\Delta_s(\chi_\infty P)\|_{\mathcal D^{0,\beta}_{\delta-2}(X_s)}
        \le C\|P\|_{\mathcal D^{1,\beta}(\operatorname{Supp}(\d\chi_\infty))}.
        \notag
        \end{align}
        
        The map $
        \pi_{q, a, j}(u) := \int_{\{ r \} \times S^1 \times \Sigma_a} 
        e^{- i q \phi} 
        \bar{\psi}_{a,j} 
        \cdot u \:
        \d \phi \wedge \operatorname{vol}_{\Sigma_a}
        $
        is a bounded map on $\mathcal{C}^0(\mathcal{N}_{\Sigma_a,s})$, with operator norm at most $2 \pi$ and it satisfies $\pi_{q, a, j} (u_{P,s}) 
        =
        2 \pi c_{q,a, j}(P,s) \: I_{q, a,j}(r).$
        Because $I_{q,a,j}$ is a strictly increasing function,
        $$
        |c_{q,a,j}(P,s)| \le \frac{1}{I_{q,a,j}(R_0 + s)} \|u_{P,s}\|_{\mathcal{C}^0(\mathcal{N}_{\Sigma_a,s})}.
        $$
        Hence, we only need to estimate the ratio $\frac{I_{q,a,j}(R_0)}{I_{q,a,j}(R_0 + s)}$.
        On the neck region, we can write
        \begin{equation}
                \label{eq:estimate_bessel_function}
                I_{q,a,j}(r) = I_{q,a,j}(R_0)\frac{e^{\kappa (r-R_0)} + b_{q,a,j} \cdot e^{-\kappa (r-R_0)} }{1 + b_{q,a,j}}
        \end{equation}
        for some $b_{q,a,j} \in \RR$ and $\kappa = \sqrt{q^2 + \frac{\mu_{a,j}}{\varepsilon^2}}$. Because $I_{q,a,j}$ is a positive, strictly increasing function, $I_{q, a,j} (r) > 0$ and $I'_{q, a,j} (r) > 0$, which implies $b_{q,a,j} \in (-1,1)$. Therefore, 
        $$
        \frac{I_{q,a,j}(R_0)}{I_{q,a,j}(R_0+s)} 
        = e^{- \kappa s} \frac{1 + b_{q,a,j}}{1 + b_{q,a,j} \cdot e^{-2\kappa s}}.
        $$
        For $\kappa \: s > 0$, the function $f(b) = \frac{1 + b}{1 + b e^{-2\kappa s}}$ is non-decreasing in $b$ and so on the interval $b \in [-1,1]$, its maximum is less than 2. Therefore,
        $$
        \frac{I_{q,a,j}(R_0)}{I_{q,a,j}(R_0+s)} 
        =  e^{- \kappa s} \frac{1 + b_{q,a,j}}{1 + b_{q,a,j} \cdot e^{-2\kappa s}} \le 2 e^{- \kappa s}.
        $$
\end{proof}

We now compare the Fourier coefficients of $u_{P,s}$ and $u_{P,\infty}$.
Let $\mathcal{N}_{\Sigma_a, \infty}:=(-\infty, 0]_t \times S^1_\phi\times\Sigma_a$ be a cylindrical end and let $t\in(-\infty,0]$ be the longitudinal coordinate on it. Under the identification of $t=r-(R_0+s)$, we view $\mathcal N_{\Sigma_a,s}$ as the subset $[-s,0]\times S^1_\phi\times\Sigma_a \subset \mathcal N_{\Sigma_a,\infty}$.
Using the \(L^2\)-orthonormal eigenbasis $\{\psi_{a,j}\}_{j\ge0}$ as before, write
$$
u_{P,\infty} = \sum_{q \in \ZZ + \frac{1}{2},\: j \ge 0} u^\infty_{q,a,j}(t) e^{i q \phi} \psi_{a,j},
$$
where $u^\infty_{q,a,j}(t)$ satisfies the equation
\begin{equation}
\label{eq:radial-ode-neck}
        \frac{\partial^2 u^\infty_{q,a,j}}{\partial t^2}
        -
        \left(
        q^2
        +
        \varepsilon^{-2}\mu_{a,j}
        \right)u^\infty_{q,a,j}
        =
        0.
\end{equation}
As $u_{P,\infty}=\mathcal{O}(e^{\gamma t})$ on the cylindrical end, $u^\infty_{q,a,j}$ is a linear multiple of $e^{t \sqrt{q^2 + \frac{\mu_{a,j}}{\varepsilon^2}}}$. We therefore define the following coefficients.

\begin{definition}
        \label{def:expansion_of_u_P_infty}
    The coefficients $c^\infty_{q, a, j}(P)$ are defined by the relationship
    $$
    u_{P,\infty} = \sum_{q \in \ZZ + \frac{1}{2},\: j \ge 0} c^\infty_{q,a,j}(P) \: e^{t \sqrt{q^2 + \frac{\mu_{a,j}}{\varepsilon^2}}}
    \: e^{i q \phi} \:\psi_{a,j}
    $$
    on $\mathcal N_{\Sigma_a, \infty}$.
\end{definition}
Using an argument similar to that of Proposition \ref{prop:mode-estimate}, one can compare $c^\infty_{q,a,j}(P)$ with $c_{q,a,j}(P,s)$. 
In the next proposition, we work out the case where $j = 0$, because $I_{q,a,0}(r)$ can be written down explicitly.
\begin{proposition}
        \label{prop:limiting-behaviour-fourier-coefficients}
        For every $\beta\in(0,\frac12)$, there are constants
        $C>0$ and $0<\gamma\ll1$, independent of $s,q,a$, such that for every $s\ge1$, $q\in\ZZ+\frac12$, $a\in\{1,\ldots,p\}$, and $P\in\mathcal H_{\mathrm{poly}}(\mathbb R^n)$,
\begin{equation}
        |c_{q,a,0}(P,s) I_{q, a, 0}(R_0) e^{|q| s} - c^\infty_{q,a,0}(P)|
        \le
        C e^{-\gamma s} \left\| P \right\|_{\mathcal{D}^{1,\beta}(\operatorname{Supp}(\d \chi_\infty))}.        
\end{equation}
\end{proposition}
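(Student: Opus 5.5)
The plan is to compare $u_{P,s}$ with a cut-off transplant of $u_{P,\infty}$ on $X_s$, and then read off the $(q,a,0)$ mode on the part of the neck where both are defined. Under the identification $t=r-(R_0+s)$, the regions $\mathcal A_\infty$, $\mathcal C_{\Sigma,s}\cong\mathcal C_\Sigma$ and the half-neck $[-s/2,0]\times S^1_\phi\times\Sigma_a$ of $X_s$ sit isometrically inside $X_\infty$. Choose a cutoff $\beta_s$ on $X_s$ with the following properties:
\begin{itemize}
\item $\beta_s=1$ on $\mathcal A_\infty\cup\mathcal C_{\Sigma,s}\cup\{t\ge -s/2\}$;
\item $\beta_s=0$ on $\{t\le -s/2-1\}\cup\mathcal B_\Sigma$;
\item $|\d\beta_s|$ and $|\Delta\beta_s|$ are bounded independently of $s$.
\end{itemize}
Set $w_s:=u_{P,s}-\beta_s u_{P,\infty}$.

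\textbf{Step 1: decay of $u_{P,\infty}$.} I would first bound $u_{P,\infty}$ uniformly. Proposition \ref{prop:uniform-invertibility} on $X_\infty$ with a small cylindrical weight $\gamma>0$ gives
\[
u_{P,\infty}-\chi_\infty P=O\bigl(e^{\gamma t}\bigr)\,\|P\|_{\mathcal D^{1,\beta}(\operatorname{Supp}(\d\chi_\infty))}
\]
in $\mathcal C^{2,\beta}$ on each cylindrical end. This follows exactly as in \eqref{eq:estimate_we-need-later}, since $\chi_\infty=0$ on the ends.

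\textbf{Step 2: bound on $w_s$.} The $\chi_\infty P$ parts of $u_{P,s}$ and $\beta_s u_{P,\infty}$ cancel, so $w_s\in E^{2,\beta}_\delta(X_s)$. Moreover,
\[
\Delta_s w_s=-[\Delta_s,\beta_s]u_{P,\infty},
\]
which is supported in $\{-s/2-1\le t\le -s/2\}$. By Step 1 it has $\mathcal D^{0,\beta}_{\delta-2}$-norm at most $Ce^{-\gamma s/2}\|P\|$, because $\rho_\infty=1$ there. The uniform estimate \eqref{eq:uniform-holder-inverse-estimate} then gives
\[
\sup_{X_s}|w_s|\le Ce^{-\gamma s/2}\|P\|,
\]
with $C$ independent of $s$.

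\textbf{Step 3: the mode equation.} Project onto the mode $e^{iq\phi}\psi_{a,0}$ with $\pi_{q,a,0}$, as in the proof of Proposition \ref{prop:mode-estimate}. On the neck, $j=0$ gives $\kappa=|q|$, and \eqref{eq:estimate_bessel_function} gives
\[
c_{q,a,0}(P,s)I_{q,a,0}(r)=\frac{c_{q,a,0}(P,s)I_{q,a,0}(R_0)e^{|q|s}}{1+b}\bigl(e^{|q|t}+be^{-2|q|s}e^{-|q|t}\bigr).
\]
So on $t\in[-s/2,0]$ the projected difference has the form $Ae^{|q|t}+Be^{-|q|t}$, with
\[
A=\frac{c_{q,a,0}(P,s)I_{q,a,0}(R_0)e^{|q|s}}{1+b}-c^\infty_{q,a,0}(P),\qquad |A e^{|q|t}+Be^{-|q|t}|\le 2\pi Ce^{-\gamma s/2}\|P\|.
\]
Evaluating at $t=0$ and $t=-s/2$, and using $|q|\ge\frac12$, solves this $2\times2$ system and yields $|A|\le C'e^{-\gamma s/2}\|P\|$ uniformly in $q$. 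Renaming $\gamma$ gives the claimed rate.

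\textbf{Main obstacle: the factor $(1+b_{q,a,0})^{-1}$.} This factor arises from the behaviour of $I_{q,a,0}$ across the transition $\tilde r\to 1$ in $\mathcal B_\Sigma$. To match the stated normalization $c_{q,a,0}(P,s)I_{q,a,0}(R_0)e^{|q|s}$ exactly, I would first try to show that with the chosen $\tilde r$, and with $I_{q,a,0}$ continued on the neck, the growing part is normalized so that the effective $b$ vanishes. Failing that, I would absorb the $s$-independent constant $1/(1+b_{q,a,0})\in(\frac12,\infty)$ into the definition of $c^\infty_{q,a,0}$ relative to $I_{q,a,0}(R_0)$. The term $be^{-2|q|s}$ is harmless in either case, since it is already exponentially small.
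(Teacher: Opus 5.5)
\textbf{Gap: the normalization $b_{q,a,0}=0$ is assumed, not proved.}

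Your Steps 1--2 are the paper's argument: transplant $u_{P,\infty}$ with a cutoff, bound $w_s$ by the uniform inverse of Proposition \ref{prop:uniform-invertibility}, then project onto the $(q,a,0)$ mode. Cutting off at $t=-s/2$ instead of on $r\in[R_0,R_0+1]$ is a harmless change. It gives $e^{-\gamma s/2}$ rather than $e^{-\gamma s}$ and needs $s\ge 2$. For bounded $s$ the claim follows from Proposition \ref{prop:mode-estimate} and the bound on $u_{P,\infty}$ at $t=0$.

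The gap is the step you flag and leave open. As written, Step 3 only proves
\[
\Bigl|\tfrac{1}{1+b_{q,a,0}}\,c_{q,a,0}(P,s)I_{q,a,0}(R_0)e^{|q|s}-c^\infty_{q,a,0}(P)\Bigr|\le Ce^{-\gamma s/2}\|P\|_{\mathcal D^{1,\beta}(\operatorname{Supp}(\d\chi_\infty))},
\]
and this is the statement only if $b_{q,a,0}=0$. Your fallback of absorbing $(1+b_{q,a,0})^{-1}$ into $c^\infty_{q,a,0}$ is not allowed. That coefficient is already fixed by Definition \ref{def:expansion_of_u_P_infty}. Moreover, if $b_{q,a,0}\neq 0$, the stated inequality would be false whenever $c^\infty_{q,a,0}(P)\neq 0$. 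In that case the left-hand side would converge to $|b_{q,a,0}\,c^\infty_{q,a,0}(P)|$. So the vanishing of $b_{q,a,0}$ is a genuine ingredient, not a normalization choice.

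The missing idea, which is what the paper uses, is that for $j=0$ the radial equation \eqref{eq:radial-ode} is explicitly solvable.
\begin{enumerate}
\item Put $\sigma(r):=\int_{R_0}^r \d\tau/\tilde r(\tau)$. Then $\tilde r\,\frac{d}{dr}=\frac{d}{d\sigma}$, so \eqref{eq:radial-ode} with $\mu_{a,0}=0$ becomes $u_{\sigma\sigma}=q^2u$, with solutions $e^{\pm|q|\sigma}$.
\item For $r\le\frac12$ one has $\tilde r=r$, so $\sigma=\log r+\mathrm{const}$. Hence $e^{|q|\sigma}$ is a constant multiple of $r^{|q|}$, while $e^{-|q|\sigma}$ is a multiple of $r^{-|q|}$.
\item The normalization $I_{q,a,0}=r^{|q|}+O(r^{|q|+1})$ therefore forces $I_{q,a,0}(r)=I_{q,a,0}(R_0)\exp\bigl(|q|\int_{R_0}^r\d\tau/\tilde r\bigr)$.
\item Since $\tilde r\equiv 1$ for $r\ge R_0$, on the neck $I_{q,a,0}(r)=I_{q,a,0}(R_0)e^{|q|(r-R_0)}$. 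This is exactly $b_{q,a,0}=0$.
\end{enumerate}
With this in hand, the projected difference on the part of the neck where your cutoff equals one is exactly $\bigl(c_{q,a,0}(P,s)I_{q,a,0}(R_0)e^{|q|s}-c^\infty_{q,a,0}(P)\bigr)e^{|q|t}$, with no decaying term. Evaluating at $t=0$ against your bound on $\sup|w_s|$ finishes the proof, and the $2\times 2$ system is no longer needed.
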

\begin{proof}
        Let $u_{P,s}$ and $u_{P,\infty}$ be as in Proposition \ref{prop:existence-harmonic-sections-with-polynomial-growth}, and set
        $v_\infty:=u_{P,\infty}-\chi_\infty P$.
        Let $\chi_s\colon X_s\to[0,1]$ be a smooth cutoff such that $\chi_s=0$ for $r<R_0$ and $\chi_s=1$ for $r>R_0+1$. Under the identifications in Section \ref{subsec:model-spaces-metric}, $\chi_su_{P,\infty}$ extends by zero to an $\mathcal I$-valued section on $X_s$. 
        Define $w_s:=u_{P,s}-\chi_su_{P,\infty}$.
        
        Since $u_{P,s}$ and $u_{P,\infty}$ have the same prescribed polynomial asymptotics and $\chi_s=1$ on the Euclidean end, it follows that $w_s\in E^{2,\beta}_\delta(X_s)$.

        The support of $\d\chi_s$ lies in the region where $t\in[-s,-s+1]$. Here $u_{P,\infty}=v_\infty$. Proposition \ref{prop:uniform-invertibility} and the cylindrical weight therefore give
        \begin{align*}
        \|w_s\|_{E^{2,\beta}_\delta(X_s)}
        \le C\|\Delta_s(\chi_su_{P,\infty})\|_{\mathcal D^{0,\beta}_{\delta-2}(X_s)}\le Ce^{-\gamma s}\|v_\infty\|_{E^{2,\beta}_{\delta,\gamma}(X_\infty)}\le Ce^{-\gamma s}\|P\|_{\mathcal D^{1,\beta}(\operatorname{Supp}(\d\chi_\infty))}.
        \end{align*}

        The function
        $\exp\left(|q|\int_{R_0}^r\frac{\d\tau}{\tilde r(\tau)}\right)$
        solves \eqref{eq:radial-ode} in the case when $j = 0$ and hence
        \[
        I_{q,a,0}(r)=I_{q,a,0}(R_0)
        \exp\left(|q|\int_{R_0}^r\frac{\d\tau}{\tilde r(\tau)}\right).
        \]
        This implies
        $$
        I_{q,a,0}(R_0+s)=I_{q,a,0}(R_0)e^{|q|s}.
        $$
        The fibrewise map
        \[
        \pi_{q,a,0}(u)(r):=
        \int_{\{r\}\times S^1\times\Sigma_a}
        e^{-iq\phi}u\,\d\phi\wedge\operatorname{vol}_{\Sigma_a}
        \]
        is bounded on $\mathcal C^0(\mathcal N_{\Sigma_a,s})$ with operator norm at most $2\pi$. At the terminal slice $r=R_0+s$, or equivalently $t=0$, we have $u_{P,s}-u_{P,\infty}=w_s$ and
        \[
        \pi_{q,a,0}(u_{P,s}-u_{P,\infty})(R_0+s)
        =2\pi\left(
        c_{q,a,0}(P,s)I_{q,a,0}(R_0)e^{|q|s}
        -c^\infty_{q,a,0}(P)
        \right).
        \]
        The preceding estimate for $w_s$ proves the proposition.
\end{proof}

\subsection{Donaldson's expansions near the branching set}
\label{sec:donaldson-expansion}
In \cite{Donaldson2021}, Donaldson gives a description of the asymptotic behaviour of $L^2$-bounded harmonic sections of $\mathcal{I}$. Namely, when the normal bundle $\mathcal{N}$ of the branching set $\Sigma$ is oriented, one can view the normal bundle as a complex line bundle. Locally, near each $y \in \Sigma$, there is a local coordinate $z$ on $\mathcal{N}$, such that $|z|$ is the geodesic distance from the branching set. Donaldson showed that near $y \in \Sigma$, any $L^2$-bounded section $u$ of $\mathcal{I}$ that is harmonic near the branching set can be written as
\begin{equation}
        \label{eq:Expansion-Donaldson-general}
        u = \left(\operatorname{Re} \left( A(y) z^{\frac{1}{2}} + B(y) z^{\frac{3}{2}}\right)
- \frac{1}{2}\operatorname{Re} \left( A(y) z^{\frac{1}{2}}\right)
\operatorname{Re} \left( \bar{\mu}(y) z\right)
\right) + \mathcal{O}\left(|z|^{\frac{5}{2}}\right),
\end{equation}
where $A, B$ are local functions from $\Sigma$ to $\CC$ and $\mu$ is the mean curvature.
Actually, Donaldson showed that $A$ and $B$ can be viewed as sections of a fractional power of $\mathcal{N}$. Explicitly, $A \in \Gamma(\mathcal{N}^{-\frac{1}{2}})$ and $B \in \Gamma(\mathcal{N}^{-\frac{3}{2}})$. The regularity of $A$ and $B$ depends on the regularity of $u$.
More precisely, for \(u\in E^{k+2,\beta}_{\delta+2}(X_s)\), one has \(A(u)\in \mathcal{C}^{k+1,\beta+1/2}(\Sigma;\CC)\) and \(B(u)\in \mathcal{C}^{k,\beta+1/2}(\Sigma;\CC)\). 

In our case, a neighbourhood of each component $\Sigma_a$ is modeled on $\RR^2 \times \Sigma_a$ with the product metric. Therefore, the mean curvature is zero and we get the expansion.
\begin{equation}
        \label{eq:Expansion-Donaldson}
        u_{P,s} = \operatorname{Re} \left( A_{P,s}(y) z^{\frac{1}{2}} + B_{P,s}(y) z^{\frac{3}{2}}\right) + \mathcal{O}\left(|z|^{\frac{5}{2}}\right),
\end{equation}
We use the notation $A_{P,s}$ and $B_{P,s}$ from \eqref{eq:Expansion-Donaldson} throughout the document. 
Denote the restrictions of $A_{P,s}$ and $B_{P,s}$ to $\Sigma_a$ by $A_{P,s,a}$ and $B_{P,s,a}$. Also denote
\begin{align}\label{eq:Averages_on_X}
        A^{\mathrm{av}}_{P,s,a}
        :=
        \int_{\Sigma_a}A_{P,s,a}(y)\,\d\operatorname{vol}_{h_\Sigma}, \qquad
        B^{\mathrm{av}}_{P,s,a}
        :=
        \int_{\Sigma_a}B_{P,s,a}(y)\,\d\operatorname{vol}_{h_\Sigma}.
\end{align}
Using \(I_{q,a,j}(r)=r^{|q|}+O(r^{|q|+1})\) and $z=re^{i\phi}$, we compare \eqref{eq:Expansion-Donaldson} to the expansion in Definition \ref{def:fourier-coefficients} and conclude
\begin{align}\label{eq:A,B_terms=sum_separate_modes}
        A_{P,s}|_{\Sigma_a}
        =
        \sum_{j\ge0}c_{\frac12,a,j}(P,s)\psi_{a,j},
        \qquad
        B_{P,s}|_{\Sigma_a}
        =
        \sum_{j\ge0}c_{\frac32,a,j}(P,s)\psi_{a,j}.
\end{align}
Moreover, $A_{P,s,a}^{\mathrm{av}}=c_{\frac12,a,0}(P,s)$ and $B_{P,s,a}^{\mathrm{av}}=c_{\frac32,a,0}(P,s)$.
As we already know the decay rate of $c_{q,a,0}(P,s)$, we get an estimate on the decay rate of $A_{P,s}$ and $B_{{P,s}}$:
\begin{proposition}
        \label{prop:decay-rate_A_and_B}
        Let $\mu_1:=\min_{1\le a\le p}\mu_{a,1}$.
        For every $k \in \NN$ and $\beta \in (0, \frac{1}{2})$, there is a constant
        $C>0$, independent of $s$, such that for every $P\in\mathcal H_{\mathrm{poly}}(\mathbb R^n)$ and $s > 1$,
        \begin{align*}
        \left\|A_{P,s} - A_{P,s}^{\mathrm{av}}\right\|_{\mathcal{C}^{k+1,\beta+1/2}(\Sigma)} &\le C \:
                e^{-s \sqrt{\frac{1}{4} + \frac{\mu_1}{\varepsilon^2}}} \left\| P \right\|_{\mathcal{D}^{1,\beta}(\operatorname{Supp}(\d \chi_\infty))}, \\
        \left\|B_{P,s} - B_{P,s}^{\mathrm{av}}\right\|_{\mathcal{C}^{k,\beta+1/2}(\Sigma)} &\le C \:
                e^{-s \sqrt{\frac{9}{4} + \frac{\mu_1}{\varepsilon^2}}}\left\| P \right\|_{\mathcal{D}^{1,\beta}(\operatorname{Supp}(\d \chi_\infty))}.
        \end{align*}
\end{proposition}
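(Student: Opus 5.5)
The plan is to work mode by mode, using the identification \eqref{eq:A,B_terms=sum_separate_modes}. This gives
\[
A_{P,s,a}-A^{\mathrm{av}}_{P,s,a}=\sum_{j\ge1}c_{\frac12,a,j}(P,s)\psi_{a,j},\qquad
B_{P,s,a}-B^{\mathrm{av}}_{P,s,a}=\sum_{j\ge1}c_{\frac32,a,j}(P,s)\psi_{a,j}.
\]
Proposition~\ref{prop:mode-estimate} bounds each coefficient by
\[
C\,I_{q,a,j}(R_0)^{-1}e^{-s\kappa_{q,a,j}}\|P\|,\qquad \kappa_{q,a,j}=\sqrt{q^2+\mu_{a,j}/\varepsilon^2}.
\]
For $j\ge1$ we have $\mu_{a,j}\ge\mu_1$, so $\kappa_{q,a,j}\ge\kappa_{q,1}:=\sqrt{q^2+\mu_1/\varepsilon^2}$. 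We will split
\[
e^{-s\kappa_{q,a,j}}=e^{-s\kappa_{q,1}}\,e^{-s(\kappa_{q,a,j}-\kappa_{q,1})}
\]
and use the second factor, together with the first factor for $s\ge1$ only, to obtain summability.

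To control the $\mathcal{C}^{k+1,\beta+1/2}(\Sigma)$ norm, we will bound the $\mathcal{C}^{m}$ norm of $\psi_{a,j}$ for some $m>k+2$. Sobolev embedding and elliptic regularity give $\|\psi_{a,j}\|_{\mathcal{C}^m}\le C(1+\mu_{a,j})^{N}$ for some $N=N(m,n)$. Weyl's law gives $\mu_{a,j}\ge cj^{2/(n-2)}$. It then suffices to show that
\[
\sum_{j\ge1}(1+\mu_{a,j})^N I_{q,a,j}(R_0)^{-1}e^{-(s-1)(\kappa_{q,a,j}-\kappa_{q,1})}e^{-(\kappa_{q,a,j}-\kappa_{q,1})}
\]
is bounded uniformly in $s\ge1$. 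For $s\ge1$ the factor $e^{-(s-1)(\cdots)}$ is at most $1$. So we need $I_{q,a,j}(R_0)^{-1}$ times the polynomial weight, times $e^{-(\kappa_{q,a,j}-\kappa_{q,1})}$, to be summable in $j$. Since $\kappa_{q,a,j}-\kappa_{q,1}\gtrsim\sqrt{\mu_{a,j}}/\varepsilon-C$, the exponential factor is $e^{-c\sqrt{\mu_{a,j}}}$, which beats any polynomial in $\mu_{a,j}$ by Weyl's law.

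This requires a uniform lower bound on $I_{q,a,j}(R_0)$. We will prove $I_{q,a,j}(R_0)\ge I_{q,a,j}(1/2)\ge c>0$ uniformly in $j$ by a comparison argument on \eqref{eq:radial-ode}. On $r\le1/2$ the solution is $r^{|q|}$ times a normalized modified Bessel-type factor $\ge1$, since the potential term is positive; for example, $r^{-|q|}I_{q,a,j}$ is increasing. For $r\ge1/2$ the solution is increasing. In fact $I_{q,a,j}(R_0)$ grows, which only helps.

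The main obstacle is this bookkeeping, and in particular making sure the growth of high-eigenmode norms on $\Sigma$ is absorbed by $e^{-s(\kappa_{q,a,j}-\kappa_{q,1})}$ uniformly in $s$. If the crude bound from Proposition~\ref{prop:mode-estimate} is too lossy, an alternative is available. One can apply the projection $\pi$ onto the span of modes $j\ge1$ at the slice $r=R_0+1$, where the uniform $\mathcal{C}^{k+3}$ estimate of Proposition~\ref{prop:uniform-invertibility} controls $u_{P,s}$. The exact Bessel-type propagation from $r=R_0+s$ down to $r\to0$ then decays by at least $e^{-(s-1)\kappa_{q,1}}$ times a fixed smoothing operator. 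Donaldson's regularity $A\in\mathcal{C}^{k+1,\beta+1/2}$, $B\in\mathcal{C}^{k,\beta+1/2}$ on the fixed boundary region $\mathcal{B}_{\Sigma_a}$ then yields the Hölder norms, with the constant absorbing $e^{\kappa_{q,1}}$.
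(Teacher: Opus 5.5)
Your main argument is correct and complete, and it takes a genuinely different route from the paper's. You sum the individual mode bounds of Proposition~\ref{prop:mode-estimate} against polynomial $\mathcal{C}^{k+2}$ bounds for the eigenfunctions $\psi_{a,j}$. Because $s\ge1$, you have the leftover factor $e^{-(\kappa_{q,a,j}-\kappa_{q,1})}\le e^{\kappa_{q,1}}e^{-\sqrt{\mu_{a,j}}/\varepsilon}$, and together with Weyl's law it absorbs the polynomial growth. Your lower bound $I_{q,a,j}(R_0)\ge I_{q,a,j}(1/2)\ge 2^{-|q|}$, obtained from the monotonicity of $r^{-|q|}I_{q,a,j}$ and of $I_{q,a,j}$, is exactly what makes Proposition~\ref{prop:mode-estimate} uniform in $j$. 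So the bookkeeping is not too lossy, and the fallback is unnecessary.

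The paper never estimates $\psi_{a,j}$ pointwise. It isolates the high-mode part $\widetilde u_A$ (the $q=\frac12$, $j\ge1$ modes) and cuts it off near $\mathcal B_{\Sigma_a}$. It then uses Donaldson's coefficient estimate and the invertibility of $\Delta_1$ on the fixed space $X_1$ to bound $\|A_{P,s,a}-A^{\mathrm{av}}_{P,s,a}\|_{\mathcal C^{k+1,\beta+1/2}}$ by the $L^2$ norm of $\widetilde u_A$ on the fixed annulus $r\in[R_0,R_0+1]$. Parseval and the Bessel ratio $I(r)/I(r+s-1)\le 2e^{(1-s)\kappa}$ transfer this $L^2$ norm to the outer end $[R_0+s-1,R_0+s]$ of the neck, where the uniform $\mathcal C^0$ bound \eqref{eq:estimate_we-need-later} applies.

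What each route buys:
\begin{itemize}
\item The paper's route uses only $L^2$ orthogonality and elliptic theory it has already set up. It needs no Weyl law, no eigenfunction sup-norm bounds and no lower bound on $I_{q,a,j}(R_0)$, and it lands directly in Donaldson's H\"older space.
\item Your route is more elementary once Proposition~\ref{prop:mode-estimate} is available, and it avoids Donaldson's regularity theory for this step. It also gives a stronger conclusion: $A_{P,s}-A^{\mathrm{av}}_{P,s}$ and $B_{P,s}-B^{\mathrm{av}}_{P,s}$ are bounded in every $\mathcal C^m$ at the same exponential rates.
\end{itemize}

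Your fallback sketch is essentially the paper's argument. However, the uniform bound must be applied at the outer end of the neck, near $r=R_0+s$, not at the slice $r=R_0+1$. Projecting at $r=R_0+1$ yields no decay in $s$.
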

\begin{proof}
        The argument is the same as that of \cite[Lemma 3.2]{salm2024construction} up to some minor tweaks. Fix $a$ and let $\chi$ be a cutoff such that $\chi(r)=1$ for $r<R_0+\frac13$ and $\chi(r)=0$ for $r>R_0+\frac23$. Set
        \[
        \widetilde u_A
        :=
        \operatorname{Re}\left(
        \sum_{j\ge1}c_{\frac12,a,j}(P,s)
        I_{\frac12,a,j}(r)e^{\frac12 i\phi}\psi_{a,j}
        \right).
        \]
        Let $\Delta_1$ be the Laplacian on $X_s$ with $s = 1$.
        By \eqref{eq:A,B_terms=sum_separate_modes}, $A_{P,s,a}-A_{P,s,a}^{\mathrm{av}}$ is the $A$-coefficient of $\chi\widetilde u_A$. Donaldson's coefficient estimate in \cite{Donaldson2021}, Proposition \ref{prop:uniform-invertibility}, and interior elliptic regularity give
        \begin{align*}
        \|A_{P,s,a}-A_{P,s,a}^{\mathrm{av}}\|_{\mathcal C^{k+1,\beta+1/2}(\Sigma_a)}
        &\le C\|\chi\widetilde u_A\|_{E^{k+2,\beta}_\delta(X_1)}\\
        &\le C\|\Delta_1(\chi\widetilde u_A)\|_{\mathcal D^{k,\beta}_{\delta-2}(X_1)}\\
        &\le C\|\widetilde u_A\|_{\mathcal C^{k+1,\beta}(\operatorname{Supp}(\d\chi))}\\
        &\le C\|\widetilde u_A\|_{L^2([R_0,R_0+1]\times S^1\times\Sigma_a)}.
        \end{align*}

        Put $\kappa_{a,j}:=\sqrt{\frac14+\frac{\mu_{a,j}}{\varepsilon^2}}$. For $r\in[R_0,R_0+1]$, the estimate in \eqref{eq:estimate_bessel_function} gives
        \[
        \frac{I_{\frac12,a,j}(r)}{I_{\frac12,a,j}(r+s-1)}
        \le 2e^{(1-s)\kappa_{a,j}}.
        \]
        Hence, by the orthogonality of the modes,
        \begin{align*}
        \|\widetilde u_A\|_{L^2([R_0,R_0+1]\times S^1\times\Sigma_a)}^2
        &=\pi\sum_{j\ge1}|c_{\frac12,a,j}(P,s)|^2
        \int_{R_0}^{R_0+1}I_{\frac12,a,j}(r)^2\,\d r\\
        &\le C e^{-2(s-1)\sqrt{\frac14+\frac{\mu_1}{\varepsilon^2}}}
        \|u_{P,s}\|_{L^2([R_0+s-1,R_0+s]\times S^1\times\Sigma_a)}^2.
        \end{align*}
        Taking square roots and using \eqref{eq:estimate_we-need-later}, we obtain
        \[
        \|A_{P,s,a}-A_{P,s,a}^{\mathrm{av}}\|_{\mathcal C^{k+1,\beta+1/2}(\Sigma_a)}
        \le
        C e^{-s\sqrt{\frac14+\frac{\mu_1}{\varepsilon^2}}}
        \|P\|_{\mathcal D^{1,\beta}(\operatorname{Supp}(\d\chi_\infty))}.
        \]
        The same argument, with $q=\frac32$, proves the estimate for $B$.
\end{proof}

For the Nash--Moser theorem, it is important that $A^{\mathrm{av}}_{P,s,a}=c_{\frac12,a,0}(P,s)$ and $B^{\mathrm{av}}_{P,s,a}=c_{\frac32,a,0}(P,s)$ are the dominant modes in the expansion of $u_{P,s}$. We know by Proposition \ref{prop:mode-estimate} that $c_{q,a,j}(P,s)=\mathcal{O}(e^{-s \sqrt{q^2 + \frac{\mu_{a,j}}{\varepsilon^2}}})$. Hence, for $A^{\mathrm{av}}_{P,s,a}$ and $B^{\mathrm{av}}_{P,s,a}$ to dominate, we need that 
$$
\frac{3}{2} < \sqrt{\left(\frac{1}{2}\right)^2 + \frac{\mu_{a,j}}{\varepsilon^2}}
$$
for all $a \in \{1, \ldots, p \}$ and $j\ge1$.
This happens when $2 \varepsilon^2 < \mu_1$ and so we define:
\begin{definition}
        \label{def:epsilon-in-metric}
Let $\mu_1:=\min_{1\le a\le p}\mu_{a,1}$ and recall the metric on the neck and boundary regions
\begin{align*}
        g_s=&\mathrm d r^2+\mathrm d\phi^2+\varepsilon^2h_{\Sigma_a}, \\
        g_\infty=&\mathrm d t^2+\mathrm d\phi^2+\varepsilon^2h_{\Sigma_a}.
\end{align*}
In these definitions, set $\varepsilon := \frac{\sqrt{\mu_1}}{2}$.
\end{definition}

By Proposition \ref{prop:limiting-behaviour-fourier-coefficients} we also know the decay behaviour of $A_{P,s}^{\mathrm{av}}$ and $B_{P,s}^{\mathrm{av}}$. With this in mind, define
\begin{align}\label{eq:Averages_on_X_infinity}
        A^{\mathrm{av}}_{P,\infty,a}
        :=
        \frac{c^\infty_{\frac12,a,0}(P)}{I_{\frac{1}{2}, a, 0}( R_0)}, \qquad
        B^{\mathrm{av}}_{P,\infty,a}
        :=
        \frac{c^\infty_{\frac32,a,0}(P)}{I_{\frac{3}{2}, a, 0}( R_0)}.
\end{align}
\begin{remark}
        Notice that $A^{\mathrm{av}}_{P,\infty,a}$ can be defined even if $P$ is not a harmonic polynomial. 
        Indeed, to define $A^{\mathrm{av}}_{P, \infty,a}$, we only needed the asymptotic expansion of Definition \ref{def:expansion_of_u_P_infty}. 
        Such an expansion exists for any $u \in \Gamma(\mathcal{I})$ such that $\Delta(u)$ is supported away from the cylindrical ends. 
        For such $u$, we write $A^{\mathrm{av}}_{\infty,a}(u)$. By definition, $A^{\mathrm{av}}_{P,\infty,a} = A^{\mathrm{av}}_{\infty,a}(u_{P,\infty})$.
\end{remark}

Combining Proposition \ref{prop:limiting-behaviour-fourier-coefficients} with  Proposition \ref{prop:decay-rate_A_and_B}, we conclude 
\begin{proposition}
        \label{prop:decay-rate_A_and_B-in-limit}
        Let $\mu_1:=\min_{1\le a\le p}\mu_{a,1}$.
        For every $k \in \NN$ and $\beta \in (0, \frac{1}{2})$, there are constants
        $C>0$ and $0 < \gamma \ll 1$, independent of $s$, such that for every $P\in\mathcal H_{\mathrm{poly}}(\mathbb R^n)$, $a \in \{1, \ldots, p \}$ and $s > 0$,
        \begin{align*}
        \left\| e^{\frac{1}{2}s} A_{P,s,a} - A_{P,\infty,a}^{\mathrm{av}}\right\|_{\mathcal{C}^{k+1,\beta+1/2}(\Sigma_a)} &\le C \:
        e^{- \gamma \:s} \left\| P \right\|_{\mathcal{D}^{1,\beta}(\operatorname{Supp}(\d \chi_\infty))}, \\
        \left\| e^{\frac{3}{2}s} B_{P,s,a} - B_{P,\infty,a}^{\mathrm{av}}\right\|_{\mathcal{C}^{k,\beta+1/2}(\Sigma_a)} &\le C \:
        e^{- \gamma \:s} \left\| P \right\|_{\mathcal{D}^{1,\beta}(\operatorname{Supp}(\d \chi_\infty))}.
        \end{align*}
\end{proposition}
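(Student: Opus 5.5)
The estimate splits into an averaged part and an oscillating part. Fix $a$ and write
\[
e^{\frac12 s}A_{P,s,a}-A^{\mathrm{av}}_{P,\infty,a}
=e^{\frac12 s}\bigl(A_{P,s,a}-A^{\mathrm{av}}_{P,s,a}\bigr)
+\bigl(e^{\frac12 s}A^{\mathrm{av}}_{P,s,a}-A^{\mathrm{av}}_{P,\infty,a}\bigr).
\]
The $B$-coefficient is handled by the same decomposition, with $q=\frac32$ and the weight $e^{\frac32 s}$.

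\textbf{Oscillating part.} I would apply Proposition \ref{prop:decay-rate_A_and_B} directly. It bounds the $\mathcal C^{k+1,\beta+1/2}$-norm of the first summand by
\[
C\,e^{\frac12 s-s\sqrt{\frac14+\mu_1/\varepsilon^2}}\|P\|_{\mathcal D^{1,\beta}(\operatorname{Supp}(\d\chi_\infty))}.
\]
With the choice $\varepsilon=\sqrt{\mu_1}/2$ from Definition \ref{def:epsilon-in-metric}, we have $\mu_1/\varepsilon^2=4$. Hence $\sqrt{\frac14+4}-\frac12=\frac{\sqrt{17}-1}{2}>0$.

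For $B$, the corresponding exponent is $\sqrt{\frac94+4}-\frac32=\frac{5-3}{2}=1>0$.

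In both cases the difference is a positive rate. Any $\gamma$ smaller than it works for this summand.

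\textbf{Averaged part.} The second summand is constant on $\Sigma_a$, and $\operatorname{Vol}(\Sigma_a)=1$. So its $\mathcal C^{k+1,\beta+1/2}$-norm is just its absolute value. By the remark after \eqref{eq:A,B_terms=sum_separate_modes}, $A^{\mathrm{av}}_{P,s,a}=c_{\frac12,a,0}(P,s)$. By definition \eqref{eq:Averages_on_X_infinity},
\[
e^{\frac12 s}A^{\mathrm{av}}_{P,s,a}-A^{\mathrm{av}}_{P,\infty,a}
=\frac{1}{I_{\frac12,a,0}(R_0)}\Bigl(c_{\frac12,a,0}(P,s)\,I_{\frac12,a,0}(R_0)\,e^{\frac12 s}-c^\infty_{\frac12,a,0}(P)\Bigr).
\]
Proposition \ref{prop:limiting-behaviour-fourier-coefficients} bounds the bracket by $Ce^{-\gamma s}\|P\|_{\mathcal D^{1,\beta}(\operatorname{Supp}(\d\chi_\infty))}$. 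The prefactor $1/I_{\frac12,a,0}(R_0)$ is a fixed positive constant independent of $s$.

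\textbf{Exponent bookkeeping and small $s$.} The main point to check is the normalisation. Proposition \ref{prop:limiting-behaviour-fourier-coefficients} carries the factor $I_{q,a,0}(R_0)$, while $A$ and $B$ are defined with the normalisation $I_{q,a,j}(r)=r^{|q|}+\dots$ near $r=0$. The constant $1/I_{q,a,0}(R_0)$ built into \eqref{eq:Averages_on_X_infinity} is exactly what reconciles the two.

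I then take $\gamma$ to be the minimum of the rate from Proposition \ref{prop:limiting-behaviour-fourier-coefficients} and the two positive rates found above.

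Propositions \ref{prop:decay-rate_A_and_B} and \ref{prop:limiting-behaviour-fourier-coefficients} are stated for $s>1$ and $s\ge1$, but the claim is for all $s>0$. On $s\in(0,1]$, both sides are bounded by $C\|P\|_{\mathcal D^{1,\beta}(\operatorname{Supp}(\d\chi_\infty))}$, uniformly in $s$. This uses Donaldson's coefficient estimate together with Proposition \ref{prop:uniform-invertibility}, as in the proof of Proposition \ref{prop:decay-rate_A_and_B}. Enlarging $C$ then covers this range.
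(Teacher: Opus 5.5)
Your proposal is correct and follows the paper's approach. The paper's proof is the one-line remark that the statement follows by combining Proposition~\ref{prop:limiting-behaviour-fourier-coefficients} for the averaged mode (after dividing by $I_{q,a,0}(R_0)$) with Proposition~\ref{prop:decay-rate_A_and_B} for the nonconstant modes on $\Sigma_a$. Their decay rate $\sqrt{q^2+\mu_1/\varepsilon^2}$ strictly exceeds $|q|$ for any $\varepsilon$, since $\mu_1>0$, so the specific choice of $\varepsilon$ is not needed here.

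Your separate treatment of $s\in(0,1]$ goes beyond what the paper writes. It relies on uniform estimates on $X_s$ for small $s$, which Proposition~\ref{prop:uniform-invertibility} (stated for $s\ge1$) does not literally provide, though they follow by the same argument. In any case, the proposition is only ever used for $s\gg0$.
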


\section{Metric perturbations and the averaged coefficient map}
\label{sec:limit-coefficients-metric}
The goal of this section is to show the following proposition:
\begin{proposition}
    \label{prop:phase-family-of-zonal-polynomials}
    Let $V\subset\mathcal{H}_{\mathrm{poly}}(\RR^n)$ be a subspace of dimension $4p$. 
    For a generic choice of $g_s$ on the transition region and for sufficiently large $s$, there is a smooth family of polynomials
    $P_{s,\theta}\in V$, parameterized by $s \gg 0$ and $\theta\in S^1 \subset \CC$, such that for each
    $a\in\{1,\ldots,p\}$
    \[
    A_{P_{s, \theta},s,a}^{\mathrm{av}} = 0, 
    \quad B_{P_{s, \theta},s,a}^{\mathrm{av}} = \theta,
    \quad\text{and}\quad \|P_{s,\theta}\|_{\mathcal{D}^{1, \beta}(\operatorname{Supp}(\d \chi_\infty))} \le Ce^{\frac{3}{2} s} 
    \]
    for some uniform constant $C > 0$.
    Moreover, \(P_{s,-\theta}=-P_{s,\theta}\).
\end{proposition}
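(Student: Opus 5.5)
The plan is to reduce the statement to a finite-dimensional linear algebra problem. For each $s$, consider the real-linear map
\[
L_s\colon V\to \CC^{2p}\cong\RR^{4p},\qquad
L_s(P):=\bigl(e^{\frac12 s}A^{\mathrm{av}}_{P,s,a},\; e^{\frac32 s}B^{\mathrm{av}}_{P,s,a}\bigr)_{a=1}^p ,
\]
together with its limit
\[
L_\infty(P):=\bigl(A^{\mathrm{av}}_{P,\infty,a},\,B^{\mathrm{av}}_{P,\infty,a}\bigr)_{a=1}^p .
\]
Both maps are linear in $P$ by Proposition~\ref{prop:existence-harmonic-sections-with-polynomial-growth}. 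Because $V$ is finite dimensional, all norms on it are equivalent; in particular $\|P\|_{\mathcal D^{1,\beta}(\operatorname{Supp}(\d\chi_\infty))}$ is a norm on $V$, since a harmonic polynomial vanishing on an open set vanishes identically. Proposition~\ref{prop:decay-rate_A_and_B-in-limit}, applied after averaging over $\Sigma_a$, then gives $\|L_s-L_\infty\|\le Ce^{-\gamma s}$ in operator norm. Since $\dim V=4p=\dim_\RR\CC^{2p}$, it suffices to show that $L_\infty$ is an isomorphism. Then $L_s$ is invertible for $s\gg0$ with $\|L_s^{-1}\|$ uniformly bounded, and $L_s^{-1}$ depends smoothly on $s$ because $P\mapsto u_{P,s}$ does; I would check this by differentiating the equation $\Delta_s v=-\Delta_s(\chi_\infty P)$ after pulling back by the diffeomorphisms $\Phi_s$.

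Given invertibility, define
\[
P_{s,\theta}:=L_s^{-1}\bigl(0,\;e^{\frac32 s}\theta\bigr)_{a},
\]
meaning the preimage of the vector whose $A$-entries are all $0$ and whose $B$-entries are all $e^{\frac32 s}\theta$. Unwinding the definition of $L_s$ gives $A^{\mathrm{av}}_{P_{s,\theta},s,a}=0$ and $B^{\mathrm{av}}_{P_{s,\theta},s,a}=\theta$. The bound $\|P_{s,\theta}\|\le\|L_s^{-1}\|\,e^{\frac32 s}|\theta|\le Ce^{\frac32 s}$ follows from the uniform bound on $\|L_s^{-1}\|$ and $|\theta|=1$. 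Linearity gives $P_{s,-\theta}=-P_{s,\theta}$, and smoothness in $(s,\theta)$ follows from smoothness of $L_s^{-1}$.

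The main obstacle is proving that $L_\infty\colon V\to\RR^{4p}$ is an isomorphism for a generic choice of the metric $g_{\mathcal C}$ on the transition region. Here $L_\infty$ depends only on $X_\infty$. Its entries are the real and imaginary parts of the coefficients $c^\infty_{\frac12,a,0}(P)$ and $c^\infty_{\frac32,a,0}(P)$, which are pairings of $u_{P,\infty}$ against the modes $e^{iq\phi}$ for $q=\frac12,\frac32$ on each cylindrical end. I would argue by transversality. First, I would show that the $4p$ linear functionals $P\mapsto c^\infty_{q,a,0}(P)$ on all of $\mathcal H_{\mathrm{poly}}$ can be made linearly independent. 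To do this I would use a Green's-type identity: for each mode there is a dual $\mathcal I$-valued harmonic function $w_{q,a}$ on $X_\infty$ that grows like $e^{-\kappa t}e^{-iq\phi}$ on the $a$-th end, decays on the other ends, and is $O(r^{2-n})$-type at infinity modulo a harmonic expansion. Integration by parts then expresses $c^\infty_{q,a,0}(P)$ as a pairing of $P$ with the asymptotic expansion of $w_{q,a}$ at infinity. Independence of the $4p$ functionals reduces to independence of these asymptotic data. Second, I would handle genericity. Linear dependence would be a nontrivial relation among the $w_{q,a}$, and a compactly supported metric variation $\dot g$ on $\mathcal C_\Sigma$ changes the functionals by $\int_{\mathcal C_\Sigma}\langle \dot g, \d u_{P,\infty}\otimes \d w\rangle$-type terms. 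Unique continuation then lets me destroy any such relation, so invertibility is an open dense condition in $g_{\mathcal C}$. Finally, once the functionals are independent on $\mathcal H_{\mathrm{poly}}$, a generic $4p$-dimensional subspace gives an isomorphism. For the statement as written, with an arbitrary $V$, I would perturb the metric rather than $V$, using the same first-variation argument restricted to $V$, so that $L_\infty|_V$ is invertible. I expect this genericity step, making the variation argument rigorous, to be the hardest part.
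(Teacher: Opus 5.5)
Your proposal is correct and is essentially the paper's proof. The paper uses the same rescaled coefficient map $\Gamma_s$ and the bound $\|\Gamma_s-\Gamma\|_{\mathrm{op}}\le Ce^{-\gamma s}$ from Proposition~\ref{prop:decay-rate_A_and_B-in-limit}, inverts $\Gamma_s$ by a Neumann series, and sets $P_{s,\theta}=e^{3s/2}\Gamma_s^{-1}(0,\theta,\ldots,0,\theta)$, with the norm bound and $P_{s,-\theta}=-P_{s,\theta}$ coming from linearity. Two small differences: the generic invertibility of the limit map, which you sketch via dual growing sections and a first-variation/unique-continuation argument, is what the paper takes as given from Lemma~\ref{lem:generic-surjectivity-coefficient-map} (built on the Poisson sections of Lemma~\ref{lem:coordinate-poisson-sections} and citing Salm); and the paper's proof only checks smoothness in $\theta$, so your extra differentiation in $s$ is not needed for the later arguments.
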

The properties of $P_{s, \theta}$ will be important later. Namely, in the next section we will show that for $s \gg 0$ and $\theta \in S^1$, there exists a perturbation $\tilde \Sigma$ of $\Sigma$ and a $\ZT$-harmonic section on $X_s$ that approximates $P_{s, \theta}$ at infinity and has $\tilde \Sigma$ as its branching set. The phase $\theta$ will be chosen after the Nash--Moser correction to make
the Euclidean flux $\lim_{R\to \infty}\int_{S_R^{n-1}}\partial_r u\,dA$ vanish.

The argument will be very similar to the one presented in \cite{salm2024construction}. 
The construction in \cite{salm2024construction} uses a sufficiently large space of linearly independent $L^2$-bounded $\ZZ/2$-harmonic 1-forms to construct a 
specific $L^2$-bounded $\ZZ/2$-harmonic 1-form with properties analogous to those of Proposition \ref{prop:phase-family-of-zonal-polynomials}.
In this section we show that harmonic polynomials can be used in our setting instead.

An important tool in \cite{salm2024construction} is the existence of `Poisson sections'. When applied to our setting, they are as follows:
\begin{lemma}[Coordinate Poisson sections]
\label{lem:coordinate-poisson-sections}
For each component \(\Sigma_a\) there exist \(g_\infty\)-dependent
\(\mathcal I\otimes \CC\)-valued harmonic sections \(G_{A,a}\) and
\(G_{B,a}\), such
that for every \(w\in E^{k+2,\beta}_{\delta,\gamma}(X_\infty)\) whose
Laplacian is compactly supported on $X_{\infty}$, we have
\begin{align*}
A^{\mathrm{av}}_{\infty,a}(w)
=
\int_{X_\infty}\langle G_{A,a},\Delta_\infty w\rangle\,\d\operatorname{vol}_{g_\infty},\quad
B^{\mathrm{av}}_{\infty,a}(w)
=
\int_{X_\infty}\langle G_{B,a},\Delta_\infty w\rangle\,\d\operatorname{vol}_{g_\infty}.
\end{align*}
 
Moreover, on the \(a\)-th cylindrical end,
\[G_{A,a}=c_{A,a}e^{-t/2}e^{-i\phi/2}+O(e^{\gamma t}), \quad
G_{B,a}=c_{B,a}e^{-3t/2}e^{-3i\phi/2}+O(e^{\gamma t})\] as \(t\to-\infty\),
with \(c_{A,a},c_{B,a}\ne0\).  On every
other cylindrical end, these two sections decay exponentially, while on the
Euclidean end $G_{A,a},G_{B,a}=O(r_\infty^{2-n})$.
\end{lemma}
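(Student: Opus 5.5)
We construct $G_{A,a}$ and $G_{B,a}$ as Green-type sections, each the unique harmonic section with a prescribed growing mode on the $a$-th end. Write $\kappa_A:=\frac12$ and $\kappa_B:=\frac32$; these are the exponents of the $(q,j)=(\pm\frac12,0)$ and $(\pm\frac32,0)$ modes, since $\mu_{a,0}=0$. Let $\chi_a$ be a cutoff on $X_\infty$ equal to $1$ for $t\le -2$ on the $a$-th cylindrical end and $0$ elsewhere. Set
\[
F_A:=\chi_a\,e^{-t/2}e^{-i\phi/2},
\]
which is an $\mathcal I\otimes\CC$-valued section because $e^{-i\phi/2}$ is anti-periodic. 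On the cylinder, $\Delta_\infty=-\partial_t^2-\partial_\phi^2+\varepsilon^{-2}\Delta_{h_{\Sigma_a}}$, so $F_A$ is harmonic there and $\Delta_\infty F_A$ is compactly supported in the collar $t\in[-2,-1]$. By Proposition~\ref{prop:uniform-invertibility} there is a unique $w_A\in E^{k+2,\beta}_{\delta,\gamma}(X_\infty)$ with $\Delta_\infty w_A=-\Delta_\infty F_A$. Put $\widetilde G_{A,a}:=F_A+w_A$. This section is harmonic and has the form $e^{-t/2}e^{-i\phi/2}+O(e^{\gamma t})$ on the $a$-th end. On the Euclidean end it is $O(r_\infty^{2-n})$, by the spherical-harmonic argument in the proof of Proposition~\ref{prop:uniform-invertibility}: no nondecaying modes survive at weight near $\delta_*$. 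On the other cylindrical ends it decays exponentially, because the mode expansion kills $e^{-\kappa t}$ terms. The same construction with $e^{-3t/2}e^{-3i\phi/2}$ produces $\widetilde G_{B,a}$.

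Next comes the Green identity. Take $w\in E^{k+2,\beta}_{\delta,\gamma}(X_\infty)$ with $\Delta_\infty w$ compactly supported. Truncate $X_\infty$ at $t=-T$ on each cylindrical end and at $r_\infty=R$, and integrate $\langle \widetilde G,\Delta w\rangle-\langle\Delta\widetilde G,w\rangle$ over the truncated region. Here $\langle\cdot,\cdot\rangle$ is the real pairing $\operatorname{Re}$ or $\mathcal I$-bilinear pairing extended complex-linearly, and the fibre product of two $\mathcal I$-valued sections is single valued. Green's formula gives boundary terms. The Euclidean boundary term is $O(R^{\delta}\cdot R^{1-n}\cdot R^{n-1})\to0$, since $\delta<0$ and $w$ has weight $\delta$ while $\widetilde G=O(R^{2-n})$. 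The boundary terms at the other cylindrical ends vanish as $T\to\infty$ because both sections decay. On the $a$-th end, $w$ is harmonic for $t\ll0$, so $w=\sum c^\infty_{q,a,j}(w)e^{\kappa t}e^{iq\phi}\psi_{a,j}$. Pairing this with $e^{-t/2}e^{-i\phi/2}$ over the slice $\{t=-T\}\times S^1\times\Sigma_a$ picks out, by orthogonality, only the mode $(q,j)=(\frac12,0)$. The corresponding Wronskian-type term $\widetilde G\partial_t w-w\partial_t\widetilde G$ equals $(\frac12+\frac12)\,2\pi\, c^\infty_{\frac12,a,0}(w)$, independent of $T$. The error $O(e^{\gamma t})$ paired with the decaying $w$ tends to $0$. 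Hence
\[
\int_{X_\infty}\langle\widetilde G_{A,a},\Delta_\infty w\rangle=\pm 2\pi\,c^\infty_{\frac12,a,0}(w).
\]
Dividing by the nonzero normalisation $\pm2\pi\,I_{\frac12,a,0}(R_0)$, which relates $c^\infty$ to $A^{\mathrm{av}}_{\infty,a}$ via \eqref{eq:Averages_on_X_infinity}, defines $G_{A,a}$. The constant $c_{A,a}$ is the reciprocal of this factor, so it is nonzero. The $B$ case is identical, with Wronskian factor $3\cdot 2\pi$.

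The main obstacle is bookkeeping in the Green identity. The convention for $\langle\cdot,\cdot\rangle$ on $\mathcal I\otimes\CC$ must select exactly the $q=+\frac12$ coefficient (hence the conjugate phase $e^{-i\phi/2}$), and the weights must be compatible so that every other boundary term vanishes. In particular, the pairing of the growing term $e^{-t/2}$ with $w$ needs $w=O(e^{\gamma t})$ together with the mode gap: all modes of $w$ other than $(\frac12,0)$ and $(-\frac12,0)$ have exponent at least $\frac12$ and are orthogonal anyway. One must also confirm that $w$ lies in the weighted space so that its mode expansion has only growing exponentials $e^{\kappa t}$, which follows from the Range argument in Proposition~\ref{prop:uniform-invertibility}.
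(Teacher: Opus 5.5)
Your proposal is correct and follows the paper's argument. For $\kappa\in\{\tfrac12,\tfrac32\}$ you cut off the growing mode $e^{-\kappa t}e^{-i\kappa\phi}$ on the $a$-th end and correct it via Proposition~\ref{prop:uniform-invertibility}, which gives a harmonic section that decays on the other ends and is $O(r_\infty^{2-n})$ at Euclidean infinity; you then read off the coefficient formula from Green's identity, and your explicit Wronskian computation on the slice $t=-T$ fills in the step the paper only asserts. The one slip is harmless: the slice carries the metric $\d\phi^2+\varepsilon^2h_{\Sigma_a}$ with $\operatorname{Vol}_{h_\Sigma}(\Sigma_a)=1$, so the boundary term is $2\pi\varepsilon^{n-2}(2\kappa)\,c^\infty_{\kappa,a,0}(w)$ rather than $2\pi(2\kappa)\,c^\infty_{\kappa,a,0}(w)$, which only changes $c_{A,a},c_{B,a}$ by a nonzero factor.
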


\begin{proof}
We use the cutoff-and-correction construction from
\cite[Section~4]{salm2024construction}.  On the \(a\)-th cylindrical end,
let \(Q\) denote either of the growing model solutions displayed above and
choose a cutoff \(\eta\) which equals \(1\) for \(t\ll0\) and vanishes near
the transition region.  Then \(\Delta_\infty(\eta Q)\) is compactly
supported.  Proposition~\ref{prop:uniform-invertibility} gives a correction
\(H\) satisfying $\Delta_\infty H=-\Delta_\infty(\eta Q)$,
with exponential decay on every cylindrical end and
\(H=O(r_\infty^{2-n})\) on the Euclidean end.  

Thus \(G:=\eta Q+H\) is
harmonic and has the asserted asymptotics.  After normalizing the leading
constant in \(Q\), Green's identity gives the two coefficient formulas.
The boundary integrals vanish on the other cylindrical ends by exponential
decay and at Euclidean infinity by the \(O(r_\infty^{2-n})\) estimate.
\end{proof}

These Poisson sections are useful in the study of the variation of the $A^{\mathrm{av}}_{\infty, a}$ and $B^{\mathrm{av}}_{\infty, a}$ terms. To show this, we need to revisit the work of \cite{salm2024construction} and \cite{He2025}:
Define the perturbation $g_{\infty, \tau} = g_\infty + \tau \: T$, where $|\tau| \ll 1$ and $T$ is a smooth 2-tensor, supported on the transition region of $X_\infty$. Write $u_{P,\infty,\tau} = v_{P,\infty,\tau} + \chi_{\infty} P$ for the solution of the equation $\Delta_{g_{\infty, \tau}} u_{P,\infty,\tau} =0$, so that $u_{P,\infty,0}=u_{P,\infty}$. Denote the $\tau$-derivative of $u_{P,\infty,\tau}$ at $\tau=0$ by $\dot{u}_{P,\infty}$. According to \cite[Proposition 3.3]{HeSalm2026MetricPerturbations},
\begin{equation}
    \label{eq:generic-perturbation-metric}
    \d^{*_{g_{\infty}}} \left(\d \dot{u}_{P,\infty} -
T(\nabla_{g_\infty}u_{P,\infty}, \cdot) + \frac{1}{2} \operatorname{Tr}_{g_{\infty}}(T) \d u_{P,\infty}
\right) = 0.
\end{equation}
This proposition is still valid in our case, as it only depends on the unique solvability of $\Delta_{g_{\infty, \tau}}$ and the smooth dependence of the solution on the metric parameter. Hence, to calculate $\left. \frac{\partial}{\partial \tau} \right|_{\tau=0} A^{\mathrm{av}}_{P,\infty, a}$ we just need to calculate
\begin{align}
        \label{eq:variation-Aav-as-L2-inner-product}
        \left. \frac{\partial}{\partial \tau} \right|_{\tau=0} A^{\mathrm{av}}_{P, \infty, a}
        = \langle G_{A, a}, \d^* (
        T(\nabla_{g_\infty}u_{P,\infty}, \cdot) - \frac{1}{2} \operatorname{Tr}_{g_{\infty}}(T) \d u_{P,\infty}
        ) \rangle_{L^2(X_\infty)}.
\end{align}
{

When $T$ is compactly supported on $M \setminus \Sigma$, He \cite{He2025} showed that
$
\left. \frac{\partial}{\partial \tau} \right|_{\tau=0} A^{\mathrm{av}}_{P, \infty, a}
        = \langle T,  \hat{S}_{G_{A, a}}\rangle_{L^2(X_\infty)},
$
where $\hat{S}_{G} := \frac{1}{2} \left(\d G \otimes \d u_{P, \infty} + \d u_{P, \infty} \otimes \d G -  \langle\d G, \d u_{P, \infty}\rangle_{g_{\infty}} g_\infty \right)$.
Similarly, $\left. \frac{\partial}{\partial \tau} \right|_{\tau=0} B^{\mathrm{av}}_{P,\infty, a}
        = \langle T,  \hat{S}_{G_{B, a}}\rangle_{L^2(X_\infty)}$.

\begin{remark}
\label{rem:real-coordinate-variation-formulas}
The preceding identities are written after complexification.  More
precisely, for $F\in\{\operatorname{Re}A_a,\operatorname{Im}A_a,
\operatorname{Re}B_a,\operatorname{Im}B_a\}$,
let \(G_F\) denote the corresponding real Poisson section
\(\operatorname{Re}G_{A,a}\), \(\operatorname{Im}G_{A,a}\),
\(\operatorname{Re}G_{B,a}\), or \(\operatorname{Im}G_{B,a}\).  The
variation formula is then
\[
\left.\frac{\partial}{\partial\tau}\right|_{\tau=0}F
= \langle G_{F}, \d^* (
        T(\nabla_{g_\infty}u_{P,\infty}, \cdot) - \frac{1}{2} \operatorname{Tr}_{g_{\infty}}(T) \d u_{P,\infty}
        ) \rangle_{L^2(X_\infty)}.
\]
All metric perturbations below are understood in this real sense.  In
particular, $\hat{S}_{G_F}$ is a real symmetric \(2\)-tensor.
\end{remark}

Notice that for a fixed choice of $P$ the variation formula in \eqref{eq:variation-Aav-as-L2-inner-product} is identical to the variational formula used in \cite[Section 4]{salm2024construction}. Hence, repeating the proof of \cite[Proposition 2.3]{He2025} or \cite[Proposition 4.9]{salm2024construction} one concludes:
\begin{lemma}
\label{lem:generic-metric-Aav-and-Bav-nonzero}
For a generic metric on the transition region, 
$A^{\mathrm{av}}_{P,\infty, a} \not = 0$ and $B^{\mathrm{av}}_{P,\infty, a} \not = 0$.
\end{lemma}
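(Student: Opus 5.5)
The plan is a Baire-category argument on the space of metrics on the transition region. We work with $P$ fixed and nonzero; if $P=0$ then $u_{P,\infty}=0$ and the statement is vacuous. Also fix $a$ and one of the four real functionals $F\in\{\operatorname{Re}A_a,\operatorname{Im}A_a,\operatorname{Re}B_a,\operatorname{Im}B_a\}$, and view $F$ as a real-analytic (in particular smooth) function of the metric on $\mathcal C_\Sigma$. This dependence comes from Proposition~\ref{prop:uniform-invertibility}: the inverse of $\Delta_{g_{\infty,\tau}}$ depends analytically on the coefficients, and $u_{P,\infty,\tau}=\chi_\infty P+v_{P,\infty,\tau}$.

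The zero set of $F$ is closed. It therefore suffices to show that at every metric $g_\infty$ where $F=0$, the differential of $F$ in the direction of some compactly supported $T$ on the transition region is nonzero. Then $\{F=0\}$ is locally a hypersurface, so its complement is open and dense. Intersecting over the finitely many $F$ and $a$ gives the genericity claim. (One can also read ``generic'' as asking that $A^{\mathrm{av}}$ and $B^{\mathrm{av}}$ be nonzero as complex numbers; then it even suffices to handle $\operatorname{Re}$ or $\operatorname{Im}$ alone.)

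By Remark~\ref{rem:real-coordinate-variation-formulas} and He's formula, the differential is $\langle T,\hat S_{G_F}\rangle_{L^2}$, so it is nonzero for some $T$ supported in the interior of $\mathcal C_\Sigma$ unless $\hat S_{G_F}\equiv 0$ there. The key step is therefore to show that $\hat S_{G_F}$ cannot vanish identically on an open set. Suppose $\hat S_{G}=0$ on an open set $U$. Taking the trace gives $(1-\tfrac n2)\langle dG,du\rangle=0$, so $\langle dG,du\rangle=0$ (using $n\ge3$), and hence $dG\otimes du+du\otimes dG=0$. For two covectors this forces $dG=0$ or $du=0$ at each point of $U$. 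By unique continuation for harmonic sections, either $G_F$ is locally constant or $u_{P,\infty}$ is locally constant on $U$; by unique continuation this then holds on the connected space $X_\infty$. A locally constant $\mathcal I$-valued section with monodromy $-1$ must vanish. But $G_F\neq0$, because its leading coefficient $c_{A,a}$ or $c_{B,a}$ is nonzero by Lemma~\ref{lem:coordinate-poisson-sections}, and $u_{P,\infty}\neq 0$ since it is asymptotic to $P\neq0$. This is a contradiction.

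The main obstacle is the analytic dependence of $F$ on the metric, needed to make the ``locally a hypersurface'' conclusion rigorous. If this becomes delicate, I would instead follow the approach of \cite[Proposition 4.9]{salm2024construction}. Given any metric with $F=0$, perturb by $\tau T$ with $\langle T,\hat S_{G_F}\rangle\neq0$. The first-order Taylor expansion then gives $F\neq 0$ for small $\tau\neq0$, which proves density. Openness is immediate from continuity of $F$ in the $C^{k}$ topology, which in turn follows from the uniform estimates of Proposition~\ref{prop:uniform-invertibility}.
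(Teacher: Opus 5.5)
Your proposal is correct and is essentially the argument the paper relies on when it defers to \cite[Proposition 2.3]{He2025} and \cite[Proposition 4.9]{salm2024construction}: the variation of each real coefficient is $\langle T,\hat S_{G_F}\rangle_{L^2}$, and $\hat S_{G_F}\equiv0$ on an open set is ruled out by taking the trace (so $\langle dG_F,du\rangle=0$ since $n\ge3$), which forces $dG_F=0$ or $du=0$ on a nonempty open set and is then excluded by unique continuation and the $-1$ monodromy, so $\{F\neq0\}$ is open and dense. Two small wording points: for $P=0$ the lemma is false rather than vacuous, so $P\neq0$ is an implicit hypothesis; and since only finitely many functionals are involved, you need a finite intersection of open dense sets rather than a Baire-category argument.
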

%
%
        
Similarly, repeating the proofs of \cite[Lemmas~4.12--4.14]{salm2024construction} one concludes:
\begin{lemma}
\label{lem:generic-surjectivity-coefficient-map}
        Let $p$ be the number of connected components of $\Sigma$ and let $V \subset \mathcal{H}_{\mathrm{poly}}(\RR^n)$ be a subspace of dimension $4p$.
For a generic metric \(g\) sufficiently close to \(g_\infty\) and
agreeing with \(g_\infty\) outside \(\mathcal C_\Sigma\), the map
\begin{align*}
        \Gamma_g \colon V \to \RR^{4p}: P \mapsto& \big(
                \operatorname{Re}(A^{\mathrm{av}}_{P, \infty, a}), 
                \operatorname{Im}(A^{\mathrm{av}}_{P, \infty, a}), 
                \operatorname{Re}(B^{\mathrm{av}}_{P, \infty, a}),
                \operatorname{Im}(B^{\mathrm{av}}_{P, \infty, a})\big)_{1\le a\le p}
\end{align*}
is surjective and hence an isomorphism.
\end{lemma}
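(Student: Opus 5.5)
The plan is to reduce surjectivity of $\Gamma_g$ to a transversality statement about the Poisson sections $G_F$ of Lemma~\ref{lem:coordinate-poisson-sections}. The set of metrics $g$, close to $g_\infty$ and agreeing with it outside $\mathcal C_\Sigma$, for which $\Gamma_g$ is an isomorphism is open, because $P\mapsto u_{P,\infty}$ depends smoothly on the metric. So it suffices to show it is dense. Since $\dim V=4p$, surjectivity is equivalent to $\det\Gamma_g\neq0$, and $\det\Gamma_g$ depends real-analytically on the metric along any analytic one-parameter family $g_\infty+\tau T$. Density will therefore follow once we know that near every metric there is a direction $T$ along which $\det\Gamma_{g+\tau T}$ is not identically zero.

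I will argue inductively on the rank. Suppose $\Gamma_g$ has rank $m<4p$. Choose a basis $P_1,\dots,P_{4p}$ of $V$ with $P_{m+1},\dots,P_{4p}\in\ker\Gamma_g$, and choose a nonzero linear functional $\ell$ on $\RR^{4p}$ vanishing on the image. Write $\ell=\sum_F\ell_F F$ and set $G_\ell:=\sum_F\ell_F G_F$. By Remark~\ref{rem:real-coordinate-variation-formulas} and He's formula, for $P\in\ker\Gamma_g$ the first variation of $\ell(\Gamma_{g+\tau T}(P))$ equals $\langle T,\hat S_{G_\ell}(P)\rangle_{L^2}$, where $\hat S$ is built from $\d G_\ell$ and $\d u_{P,\infty}$. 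If this pairing is nonzero for some $T$ and some $P=P_{m+1}$, then an expansion to first order in $\tau$ shows that the rank strictly increases for small $\tau\neq0$. Iterating this at most $4p$ times gives a dense set of metrics with full rank. This mirrors \cite[Lemmas~4.12--4.14]{salm2024construction}.

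The key step, and the main obstacle, is showing that $\hat S_{G_\ell}(P)$ cannot vanish identically on the transition region. Suppose $\d G\otimes\d u+\d u\otimes\d G-\langle\d G,\d u\rangle g=0$ on an open set, with $G=G_\ell$ and $u=u_{P,\infty}$. Taking the trace gives $(2-n)\langle\d G,\d u\rangle=0$, hence the symmetric tensor $\d G\odot\d u$ vanishes. Since $n\ge3$, this forces $\d G=0$ or $\d u=0$ pointwise. Both functions are harmonic and real-analytic in the interior, so unique continuation makes $G$ or $u$ locally constant. A locally constant section of $\mathcal I$ is zero, because the monodromy is $-1$ around the neck.

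It remains to exclude each alternative. First, $G_\ell\not\equiv0$: by the asymptotics in Lemma~\ref{lem:coordinate-poisson-sections}, $G_\ell$ has a nonzero growing leading term $c_{A,a}e^{-t/2}$ or $c_{B,a}e^{-3t/2}$ on some end, and the constants $c_{A,a},c_{B,a}\neq0$ are linearly independent across the real and imaginary parts and across the indices $a$. Second, $u_{P,\infty}\not\equiv0$ for $P\neq0$, since $u_{P,\infty}-\chi_\infty P$ decays at infinity.

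The delicate point is that this nonvanishing must hold on the support region $\mathcal C_\Sigma$, where $\mathcal I$ may be trivial on simply connected pieces. There I would instead use unique continuation globally: if $\d G=0$ on an open subset of $\mathcal C_\Sigma$, then $G$ is locally constant on all of $X_\infty$, and the nontrivial monodromy forces $G\equiv0$. The same argument applies to $u$.

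Hence some $T$ supported in $\mathcal C_\Sigma$ pairs nontrivially with $\hat S_{G_\ell}(P)$, the rank increases, and $\Gamma_g$ is an isomorphism for generic $g$.
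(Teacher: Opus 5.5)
Your proposal is correct and is essentially the argument the paper imports by citing \cite[Lemmas~4.12--4.14]{salm2024construction}. It combines He's first-variation formula $\langle T,\hat S_{G_\ell}\rangle$, the fact that $\hat S_{G_\ell}(u_{P,\infty})\not\equiv0$ on $\mathcal C_\Sigma$ (from the trace identity, the asymptotics of the Poisson sections and unique continuation), and a rank-increasing perturbation that yields an open dense set of good metrics. One small fix: for a generic smooth metric on $\mathcal C_\Sigma$, harmonic sections need not be real-analytic. So first note (by Baire) that $\d G$ or $\d u$ vanishes on an open subset, and then invoke Aronszajn's unique continuation (e.g.\ on the double cover trivializing $\mathcal I$) instead of analyticity.
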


By Lemma~\ref{lem:generic-surjectivity-coefficient-map}, we can choose a
sufficiently small smooth symmetric \(2\)-tensor \(h\) on \(X_\infty\),
supported in the interior of \(\mathcal C_\Sigma\), such that
\(\Gamma_{g_\infty+h}\) is an isomorphism. For each \(s\), consider the collar preserving diffeomorphism $\Phi_s$ from section \ref{subsec:model-spaces-metric} and extend
\((\Phi_s^{-1})^*h\) by zero to \(X_s\). Set
\begin{equation}
	\label{eq_perturbed_metric}
	g_\infty':=g_\infty+h,\quad \text{and} \quad g_s':=g_s+(\Phi_s^{-1})^*h.
\end{equation}
Thus \(g_\infty'\) and \(g_s'\) carry the same perturbation under the
identification of the transition regions and agree with the original metrics
on all ends. We henceforth replace \(g_\infty\) and \(g_s\) by
\(g_\infty'\) and \(g_s'\), retaining the notation \(g_\infty\) and \(g_s\),
and use the same notation for the associated coefficients. Write
\(\Gamma:=\Gamma_{g_\infty}\) for the resulting limiting coefficient map.
Since \(h\) is fixed, compactly supported in \(\mathcal C_\Sigma\), and
transported to \(\mathcal C_{\Sigma,s}\) by \(\Phi_s\), the estimates proved in
Sections~\ref{sec:long-neck-elliptic} and~\ref{sec:asymptotic-expansions}
remain uniform in \(s\).

\begin{proof}[Proof of Proposition \ref{prop:phase-family-of-zonal-polynomials}]
For sufficiently large \(s\), define
\[\Gamma_s\colon V\longrightarrow\CC^{2p}:
P\mapsto
\bigl(e^{s/2}A^{\mathrm{av}}_{P,s,a},
e^{3s/2}B^{\mathrm{av}}_{P,s,a}\bigr)_{1\le a\le p}.\]
Equip \(V\) with the fixed norm $ \|P\|_V:=
\|P\|_{\mathcal D^{1,\beta}
(\operatorname{Supp}(\d\chi_\infty))}.$ Equip
\(\CC^{2p}\cong\RR^{4p}\) with its Euclidean norm, and use the induced
operator norms $\|\cdot\|_{\mathrm{op}}$.
Proposition~\ref{prop:decay-rate_A_and_B-in-limit} gives $\|\Gamma_s-\Gamma\|_{\mathrm{op}}
\le Ce^{-\gamma s}.$
Since \(\Gamma\) is an isomorphism,
$\|\Gamma^{-1}(\Gamma_s-\Gamma)\|_{\mathrm{op}}
\le 1/2$ for \(s \gg 0\).

The factorization
$\Gamma_s
=\Gamma\bigl(I+\Gamma^{-1}(\Gamma_s-\Gamma)\bigr)$
and the Neumann-series estimate therefore imply that \(\Gamma_s\) is
invertible and $\|\Gamma_s^{-1}\|_{\mathrm{op}}
\le2\|\Gamma^{-1}\|_{\mathrm{op}}.$

For \(\theta\in S^1\), set
$P_{s,\theta}:=
e^{3s/2}\Gamma_s^{-1}(0,\theta,\ldots,0,\theta)$.
Then
\[
\|P_{s,\theta}\|_V
\le2e^{3s/2}\|\Gamma^{-1}\|_{\mathrm{op}}
\|(0,\theta,\ldots,0,\theta)\|
\le Ce^{3s/2},
\]
uniformly in \(s\) and \(\theta\).  By the definition of \(\Gamma_s\),
for every \(a\in\{1,\ldots,p\}\), $A^{\mathrm{av}}_{P_{s,\theta},s,a}=0$ and
$B^{\mathrm{av}}_{P_{s,\theta},s,a}=\theta$.
For fixed $s$, the map
$\theta\mapsto e^{3s/2}\Gamma_s^{-1}(0,\theta,\ldots,0,\theta)$ is the
restriction to $S^1\subset\CC$ of a real-linear map $\CC\to V$.  Hence
$\theta\mapsto P_{s,\theta}$ is smooth and
$P_{s,-\theta}=-P_{s,\theta}$.
\end{proof}

\section{Uniform Nash--Moser deformation}
\label{sec:nash-moser}

This section develops the Nash--Moser theory for harmonic sections
\(u_{P,s}\) with vanishing averaged $A$-coefficients and nowhere-vanishing
averaged $B$-coefficients, and constructs nondegenerate \(\ZT\)-harmonic
sections on $\RR^n$, using the deformation theory of multivalued harmonic
functions \cite{Donaldson2021,salm2024construction}, together with 
\cite{DonaldsonFabian25Calabi,Zehnder17generalized,HeSalm2026MetricPerturbations}.

\subsection{Moved singular sets and uniform analytic control}
From now on, fix $s \gg 0$ and $\theta \in S^1$. From Proposition \ref{prop:phase-family-of-zonal-polynomials} we get a harmonic section $u_{P_{s,\theta}, s}$ which is almost a $\ZZ/2$-harmonic 1-form and approximates $P_{s,\theta}$ near infinity. In this section we will perturb the branching set $\Sigma$, turning this almost $\ZZ/2$-harmonic 1-form into a $\ZZ/2$-harmonic 1-form.

Explicitly, let $\Sigma_0:=\Sigma$ and for a sufficiently small $\nu \in \Gamma(N)$ let $\Sigma_\nu:=\operatorname{graph}_{\Sigma_0}(\nu)$. Choose a smooth tame family of diffeomorphisms $\Psi_\nu:\RR^n\to\RR^n$ from \cite[Proposition 4.2]{Donaldson2021} for which $\Psi_\nu(\Sigma_0) =\Sigma_\nu$, $\Psi_0=\operatorname{Id}$, and $\Psi_\nu=\operatorname{Id}$ outside the boundary region. Let $\mathcal I_0=\mathcal I\to\RR^n\setminus\Sigma_0$ be the reference flat line bundle and set $\mathcal I_\nu:=(\Psi_\nu^{-1})^*\mathcal I_0$.

By Proposition~\ref{prop:existence-harmonic-sections-with-polynomial-growth}, for every $P\in\mathcal H_{\mathrm{poly}}(\RR^n)$ and sufficiently small $\nu$, there exists a unique
$\mathcal I_\nu$-valued harmonic section $u_{P,s,\nu}$ such that $u_{P,s,\nu}=P+O(r_\infty^{\delta_*})$ on $\mathcal{A}_\infty$.

Notice that $\Psi_\nu^*u_{P,s,\nu}\in \Gamma(\mathcal{I}_0)$. According to \cite{Donaldson2021},  there is a local expansion of $\Psi_\nu^*u_{P,s,\nu}$ in the fixed normal coordinate $z=re^{i\phi}$ of $\Sigma_0$ of the form
\begin{equation}
	\label{eq:moved-local-expansion}
	\begin{split}
		\Psi_\nu^*u_{P,s,\nu}
		={}&\operatorname{Re}\left(A_s(\nu,P)z^{1/2}
		+B_s(\nu,P)z^{3/2}\right) \\
		&-\frac12\operatorname{Re}\left(A_s(\nu,P)z^{1/2}\right)
		\operatorname{Re}(\bar{\mu}_\nu z)+O(r^{5/2}),
	\end{split}
\end{equation}
where $\mu_\nu$ is the mean-curvature vector of
$\Sigma_\nu$. We use \eqref{eq:moved-local-expansion} to define $A_s(\nu,P)$ and $B_s(\nu,P)$. Since $\Sigma_0$ has trivial normal bundle, the coefficients $A_s(\nu,P)$ and $B_s(\nu,P)$ may be viewed as complex-valued functions on $\Sigma_0$.

Recall that, for fixed $s$, the map
$\nu\mapsto A_s(\nu,P_s)$ is called \emph{tame} if there is an integer
$r\ge0$ such that, for every $m\ge0$,
\[
\|A_s(\nu,P_s)\|_{\mathcal C^m(\Sigma_0)}
\le
C_{m,s}\bigl(1+\|\nu\|_{\mathcal C^{m+r}(\Sigma_0)}\bigr)
\]
for $\nu$ in a fixed neighbourhood of the zero section. The constants
$C_{m,s}$ are called the tame constants. The map is
smooth tame if the corresponding estimates hold for all its
derivatives. The same terminology applies to the map
$\nu\mapsto B_s(\nu,P_s)$ and the linear map
$\sigma\mapsto\mathscr P_s\sigma$ defined in \eqref{def:D-N-operator} below.

The next lemma shows that the derivative loss and the tame
constants can be chosen independently of $s$.
\begin{lemma}
	\label{lem:uniform-moved-coefficients}
	Let $s > 0$, $\theta \in S^1$ and let $P_{s,\theta} \in \mathcal H_{\mathrm{poly}}(\RR^n)$ be as in Proposition \ref{prop:phase-family-of-zonal-polynomials}.
	There is a tame neighbourhood $\mathcal U\subset\Gamma(N)$ of the zero section,
	independent of $s$ and $\theta$, such that $u_{P_{s, \theta},s,\nu}$ is uniquely defined
	for every $s \gg 0$, $\theta \in S^1$ and $\nu\in\mathcal U$. Moreover, the maps
	\[
	\nu\longmapsto A_s(\nu,P_{s, \theta}),
	\qquad
	\nu\longmapsto B_s(\nu,P_{s, \theta}),\qquad \sigma\mapsto\mathscr P_{s}\sigma
	\]
are smooth tame, with tame
constants independent of $s$ and $\theta$.
\end{lemma}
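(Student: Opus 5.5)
The strategy is to reduce everything to Donaldson's tame estimates on a fixed compact model, with the $s$-dependence isolated in uniformly invertible linear operators. Pull back by $\Psi_\nu$. Since $\Psi_\nu$ is the identity outside the boundary regions $\mathcal B_{\Sigma_a}$, the pulled-back section $\tilde u_\nu:=\Psi_\nu^*u_{P,s,\nu}$ is an $\mathcal I_0$-valued section on $X_s\setminus\Sigma_0$. It is harmonic for the operator $L_{s,\nu}:=\Psi_\nu^*\circ\Delta_{g_s}\circ(\Psi_\nu^{-1})^*$. This operator coincides with $\Delta_s$ outside $\bigcup_a\mathcal B_{\Sigma_a}$, and on $\mathcal B_{\Sigma_a}$ it is the Laplacian of $\Psi_\nu^*g_s$. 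Crucially, $\mathcal B_{\Sigma_a}$ with the metric $g_s$ is independent of $s$, so the coefficients of $L_{s,\nu}-\Delta_s$ are smooth tame functions of $\nu$ with constants independent of $s$. These are exactly Donaldson's estimates \cite[Section~4]{Donaldson2021}, applied on a fixed compact region.

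\textbf{Existence and uniqueness.} Write $\tilde u_\nu=u_{P,s}+w_\nu$. Then $w_\nu$ solves
\[
L_{s,\nu}w_\nu=-(L_{s,\nu}-\Delta_s)u_{P,s}.
\]
The right-hand side is supported in the boundary regions. By Proposition~\ref{prop:uniform-invertibility}, $\Delta_s$ is invertible with inverse bounded uniformly in $s$. Since $\|L_{s,\nu}-\Delta_s\|\le C\|\nu\|_{\mathcal C^{k+3}}$ as a map $E^{k+2,\beta}_{\delta}\to\mathcal D^{k,\beta}_{\delta-2}$, a Neumann series gives unique solvability of $L_{s,\nu}$ for $\nu$ in a $\mathcal C^{k+3}$-small ball $\mathcal U$ that is independent of $s$. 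We take $k$ fixed, large enough for the tame framework. The required smallness depends only on the uniform inverse bound, so it does not depend on $s$ or $\theta$.

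\textbf{Normalization.} The source term involves $u_{P,s}$ restricted to $\mathcal B_{\Sigma_a}$, and this restriction is controlled by its Fourier modes. By Proposition~\ref{prop:mode-estimate} and the bound $\|P_{s,\theta}\|\le Ce^{3s/2}$ from Proposition~\ref{prop:phase-family-of-zonal-polynomials}, the dominant $q=\tfrac32$, $j=0$ mode is of order $e^{3s/2}e^{-3s/2}=O(1)$. The $q=\tfrac12$ modes are also $O(1)$, because $A^{\mathrm{av}}=0$ by Proposition~\ref{prop:decay-rate_A_and_B}. All other modes decay, using $\varepsilon$ from Definition~\ref{def:epsilon-in-metric}. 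Hence $u_{P_{s,\theta},s}$ is bounded on $\mathcal B_{\Sigma}$ in every $\mathcal D^{m,\beta}$ norm, uniformly in $s$ and $\theta$. The $\mathcal D^{m,\beta}$ bounds follow from interior regularity applied to the bounded $\mathcal C^0$ norm on a slightly larger region.

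\textbf{Tame estimates.} The higher-derivative estimates have the form $\|w_\nu\|_{m+2}\le C_m(1+\|\nu\|_{m+r})$. They follow from Donaldson's tame Schauder estimates near $\Sigma_0$ together with the uniform Schauder estimates of Proposition~\ref{prop:uniform-invertibility} elsewhere. The standard interpolation argument applies, with all constants independent of $s$ since each regional constant is. The coefficients $A_s$ and $B_s$ are obtained from $\tilde u_\nu$ by Donaldson's bounded extraction maps, so they are smooth tame. For derivatives in $\nu$, one differentiates the equation $L_{s,\nu}\tilde u_\nu=0$: each derivative solves an equation of the same form, with a source that is tame in $(\nu,\dot\nu)$. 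The operator $\mathscr P_s$ (the linearization/Dirichlet-to-Neumann-type map) is handled identically.

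\textbf{Main obstacle.} The hardest point is making sure no hidden $s$-growth enters through $P_{s,\theta}$, whose norm is of size $e^{3s/2}$. This requires the mode-by-mode argument to show that $u_{P_{s,\theta},s}$ is uniformly bounded on the boundary region, rather than bounding it through $\|P\|$ directly.
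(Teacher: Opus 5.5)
Your outline uses the same three inputs the paper isolates when it invokes \cite[Lemma~3.3]{salm2024construction}: the uniform inverse of Proposition~\ref{prop:uniform-invertibility}, the uniform bound on $u_{P_{s,\theta},s}$ over $\mathcal B_\Sigma$, and the fact that $\Psi_\nu$ is supported in the $s$-independent region $\mathcal B_\Sigma$. Your mode-by-mode check of the second input is sound.

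\textbf{The gap.} The step that fails is the Neumann series on the fixed space $E^{k+2,\beta}_\delta(X_s)$. That space is defined by the condition $\Delta_s u\in\mathcal D^{k,\beta}_{\delta-2}$, so it depends on the metric near the branch locus. For $\Psi_\nu^*g_s$, the fixed submanifold $\Sigma_0$ has mean curvature $\mu_\nu$. This is generically nonzero, whereas $\mu_0=0$ for $g_s$. Consequently $L_{s,\nu}-\Delta_s$ contains a first-order term $\mu_\nu\cdot\nabla$ with $O(1)$ coefficient. On any $u\in E^{k+2,\beta}_\delta$ with $A(u)\neq0$, this term produces a contribution of order $|A(u)|\,|\mu_\nu|\,r^{-1/2}$, which is not in $\mathcal D^{0,\beta}$. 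This is exactly why the correction $-\frac12\operatorname{Re}(Az^{1/2})\operatorname{Re}(\bar\mu_\nu z)$ appears in \eqref{eq:moved-local-expansion}. So $L_{s,\nu}-\Delta_s$ is not even a bounded map $E^{k+2,\beta}_\delta\to\mathcal D^{k,\beta}_{\delta-2}$, let alone one of norm $C\|\nu\|_{\mathcal C^{k+3}}$.

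This bites directly in your setting. $A_{P_{s,\theta},s}$ is exponentially small but not zero; only its average vanishes. Hence your source term $-(L_{s,\nu}-\Delta_s)u_{P,s}$ is not in the target space. Moreover, the true correction $\Psi_\nu^*u_{P,s,\nu}-u_{P,s}$ contains $-\frac12\operatorname{Re}(A_s(\nu,P)z^{1/2})\operatorname{Re}(\bar\mu_\nu z)$, whose $\Delta_s$-Laplacian is of order $r^{-1/2}$. So the correction does not lie in $E^{k+2,\beta}_\delta$ whenever $A_s(\nu,P)\mu_\nu\not\equiv0$: you are looking for the solution in the wrong space.

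\textbf{The repair.} Let the domain depend on $\nu$, as in Donaldson's framework. Work with the space adapted to $L_{s,\nu}$, or equivalently precompose with a tame isomorphism that inserts the $\mu_\nu$-correction into the $z^{1/2}$-term, instead of perturbing off $\Delta_s$.
\begin{enumerate}
\item Existence and uniqueness of $u_{P,s,\nu}$ need no perturbation argument. Lemma~\ref{lemma:weak-invertibility} and the energy argument use only the monodromy, so they apply verbatim with $\Sigma_\nu$ in place of $\Sigma_0$.
\item The uniform-in-$s$ bound for the inverse on the $\nu$-adapted spaces follows by rerunning the blow-up argument of Proposition~\ref{prop:uniform-invertibility}. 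Only Case~3 sees $\nu$, and it lives on the fixed region $\mathcal B_\Sigma$ plus a half-cylinder. There the energy argument still forces the limit to vanish.
\item With these uniform global inputs, Donaldson's local tame analysis on $\mathcal B_\Sigma$ yields tame estimates for $A_s$, $B_s$ and $\mathscr P_s$, with constants independent of $s$ and $\theta$.
\end{enumerate}
With this change, the rest of your argument goes through: the interpolation for the tame estimates, differentiating the equation in $\nu$, and the treatment of $\mathscr P_s$.
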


\begin{proof}
	The argument of \cite[Lemma~3.3]{salm2024construction} applies directly.
	The uniformity in $s$ follows from Proposition
	\ref{prop:uniform-invertibility}, the fact that $u_{P_{s, \theta},s}$ is uniformly bounded in $\mathcal{B}_{\Sigma}$ and the fact that the deformation is supported in $\mathcal{B}_{\Sigma}$. See also
	\cite[Remark~3.4 and Lemma~3.6]{salm2024construction}.
\end{proof}

\subsection{Donaldson's linearization and the correction theorem}
Next we revisit the variation of $A_s$ with respect to the variation of the branching set.
For this, we need to define Donaldson's Dirichlet-to-Neumann type operator. 
\begin{proposition}[{\protect\cite[Theorem~4]{Donaldson2021}}]
	For every $s\ge1$ and $\sigma\in\Gamma(N^{1/2})$, there is a unique
	$\mathcal I_0\otimes\CC$-valued harmonic section $Q_s(\sigma)$ with zero asymptotic polynomial such
	that
	\[
	Q_s(\sigma)
	=
	\sigma z^{-1/2}
	+
	(\mathscr P_s\sigma)z^{1/2}
	+
	O(r^{3/2})
	\]
	near $\Sigma_0$.
	The coefficient of $z^{1/2}$ defines a linear map
	\begin{align}\label{def:D-N-operator}
	\mathscr P_s:\Gamma(N^{1/2})\longrightarrow\Gamma(N^{-1/2}),
	\end{align} 
	which is called the Dirichlet-to-Neumann operator.
\end{proposition}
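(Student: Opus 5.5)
We construct $Q_s(\sigma)$ by a cutoff-and-correction argument, as for the Poisson sections of Lemma~\ref{lem:coordinate-poisson-sections}. Working in the fixed normal coordinate $z=re^{i\phi}$ on the boundary region $\mathcal B_{\Sigma}$, where $g_s=\mathrm dr^2+r^2\mathrm d\phi^2+\varepsilon^2h_\Sigma$ near $\Sigma_0$, take a cutoff $\chi_0$ equal to $1$ for $r<\frac14$ and vanishing for $r>\frac12$, and set
\[
W:=\chi_0\,\sigma(y)\,z^{-1/2}.
\]
Since $z^{-1/2}$ is harmonic in the flat $\CC$-factor and the metric is a product, we get
\[
\Delta_sW=\chi_0\,(\Delta_{\varepsilon^2h_\Sigma}\sigma)\,z^{-1/2}+(\text{terms supported where } \d\chi_0\neq0).
\]
The first term is $O(r^{-1/2})$ near $\Sigma_0$; it is not in $\mathcal D^{k,\beta}_{\delta-2}$ because of the singular power. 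So first remove it by a finite iteration. Add $\chi_0\,\sigma_1(y)\,r^{2}z^{-1/2}$-type corrections, i.e.\ solve the indicial equation $\Delta_{\CC}(f(r)z^{-1/2})=-(\Delta\sigma)z^{-1/2}$ with $f=\frac{r^2}{4-4\cdot\frac14}\cdot$const. This works because the only indicial roots for the $e^{-i\phi/2}$ mode are $r^{\pm1/2}$, and $r^{3/2}e^{-i\phi/2}=\bar z^{\,1/2}\,r\,$-type terms are non-resonant. The result is an approximate solution $W'$ with $\Delta_sW'\in\mathcal D^{k,\beta}_{\delta-2}(X_s)$ compactly supported in $\mathcal B_\Sigma$ and $W'-\sigma z^{-1/2}=O(r^{3/2})$. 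Here $\sigma\in\Gamma(N^{1/2})$ and triviality of $N$ lets us treat $\sigma$ as a function. The correction terms are $O(r^{3/2})$ and so do not affect the $z^{1/2}$ coefficient.

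Next, Proposition~\ref{prop:uniform-invertibility} provides a unique $H\in E^{k+2,\beta}_\delta(X_s)$ with $\Delta_sH=-\Delta_sW'$. Set $Q_s(\sigma):=W'+H$. This is harmonic; it has zero asymptotic polynomial because $H=O(r_\infty^{2-n})$ on $\mathcal A_\infty$ and $W'$ is compactly supported. Since $H\in E^{k+2,\beta}_\delta$ is harmonic near $\Sigma_0$, Donaldson's expansion \eqref{eq:Expansion-Donaldson} applies to $H$, with zero mean curvature:
\[
H=\operatorname{Re}\bigl(A(H)z^{1/2}+B(H)z^{3/2}\bigr)+O(r^{5/2}).
\]
Complexifying and combining gives the claimed form, with $\mathscr P_s\sigma:=A(H)$ up to the real/complex bookkeeping. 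For $\mathcal I_0\otimes\CC$ we apply the real construction to real and imaginary parts. Linearity of $\sigma\mapsto\mathscr P_s\sigma$ follows since every step is linear.

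For uniqueness, suppose two such sections exist and let $D$ be their difference. Then $D$ is harmonic, $D=O(r^{1/2})$ near $\Sigma_0$, and $D$ has zero asymptotic polynomial. Growth analysis on the Euclidean end (spherical harmonics, as in the proof of Proposition~\ref{prop:uniform-invertibility}) shows $D=O(r_\infty^{2-n})$, so $D\in E^{k+2,\beta}_\delta$. Injectivity in Proposition~\ref{prop:uniform-invertibility} then gives $D=0$.

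The main obstacle is the first step: producing an approximate solution whose Laplacian lies in the weighted space near $\Sigma_0$, while not disturbing the $z^{1/2}$ coefficient. This is exactly Donaldson's construction in \cite[Theorem~4]{Donaldson2021}, which we would cite for the local formal expansion. We also need to check that the mode $\bar z^{1/2}$, which could appear from the $r^{1/2}e^{-i\phi/2}$ indicial root, is what appears in the complexified $\mathcal I_0\otimes\CC$ expansion. Presumably the statement's $z^{1/2}$ should be read with Donaldson's conventions for the complex line bundle $N^{-1/2}$.
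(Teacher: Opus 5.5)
Your proposal is correct and follows the same construction the paper sketches right after the statement (following Donaldson): cut off $\sigma z^{-1/2}+\tau_\sigma\bar z z^{1/2}$, correct it with the inverse of $\Delta_s$ from Proposition~\ref{prop:uniform-invertibility}, take $\mathscr P_s\sigma$ to be the $A$-coefficient of the correction, and obtain uniqueness from injectivity. Only cosmetic fixes are needed: the indicial factor is $(\tfrac32)^2-(\tfrac12)^2=2$, not $3$; $H$ is not harmonic near $\Sigma_0$ (its Laplacian is $-\Delta_sW'=O(r^{3/2})$), which is harmless because $A(H)$ is defined on all of $E^{k+2,\beta}_\delta$; and your suspicion about the $\bar z^{1/2}$ mode is justified (already in the flat model $\CC\times\RR^{n-2}$ the decaying complex solution $e^{ik\cdot y}z^{-1/2}e^{-|k|r}$ has only a $\bar z^{1/2}$ term at order $r^{1/2}$), so the expansion must be read for real parts as in \eqref{eq:Expansion-Donaldson}, with $\mathscr P_s$ only real-linear.
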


Donaldson's variation formula \cite[Section~5.1]{Donaldson2021} gives
\begin{equation}
	\label{eq:donaldson-derivative-formula}
	\begin{aligned}
		D_\nu A_s(\nu,P)[\dot\nu]
		={}&
		\frac32B_s(\nu,P)\dot\nu+
		\underbrace{\left(
			-\frac12\mathscr P_s
			\bigl(A_s(\nu,P)\dot\nu\bigr)
			-\frac12\langle\mu_\nu,\dot\nu\rangle A_s(\nu,P)
			\right)}_{
			\mathcal Q_s\left(\nu,A_s(\nu,P),\dot\nu\right)
		}.
	\end{aligned}
\end{equation}
Here $\mathcal Q_s$ is smooth tame, bilinear in its last two variables. Donaldson's derivation of this equation can be applied directly, so \eqref{eq:donaldson-derivative-formula} is also true in our situation. Even more, $\mathcal Q_s$ has tame constants independent of $s$ and $\theta$. Indeed, 
to construct $\mathscr P_s$ Donaldson found a section $\tau_\sigma \in \Gamma(N^{\frac12})$ such that $\Delta_s (\sigma z^{-1/2} + \tau_\sigma \bar{z} z^{1/2}) = \mathcal{O}(|z|^{3/2})$. For this, one only needs to know the metric near the branching set. Next, to find $Q_s$, Donaldson needed to solve 
$
\Delta_s u = - \Delta_s (\chi\cdot(\sigma z^{-1/2} + \tau_\sigma \bar{z} z^{1/2})),
$
for some cut-off function $\chi$ that is supported in a neighbourhood of $\Sigma_0$. As $\Delta^{-1}_s$ is a tame map with uniform tame estimates \cite[Remark 3.4]{salm2024construction}, we get uniform tame estimates on $u$. $\mathscr P_s(\sigma)$ is just the $A$-term of $u$, which must be also uniformly tame.

If $B_s(\nu,P)$ is nowhere zero, define $R_{s,P}(\nu)\xi
:=
\frac23B_s(\nu,P)^{-1}\xi$.
Then \eqref{eq:donaldson-derivative-formula} gives
\[
\begin{aligned}
	R_{s,P}(\nu)D_\nu A_s(\nu,P)[\dot\nu]
	&=
	\dot\nu+
	R_{s,P}(\nu)
	\mathcal Q_s\left(\nu,A_s(\nu,P),\dot\nu\right),\\
	D_\nu A_s(\nu,P)[R_{s,P}(\nu)\xi]
	&=
	\xi+
	\mathcal Q_s\left(
	\nu,A_s(\nu,P),R_{s,P}(\nu)\xi
	\right).
\end{aligned}
\]
Thus $R_{s,P}(\nu)$ is a two-sided inverse up to quadratic errors bilinear in $A_s(\nu,P)$. The following consequence of
\cite[Proposition~A.2]{HeSalm2026MetricPerturbations} and \cite[Theorem~A.11]{DonaldsonFabian25Calabi} records the
uniform existence estimate used below.

\begin{theorem}
	\label{thm:nash-moser-correction}
	Let $s > 0$, $\theta \in S^1$ and let $P_{s,\theta} \in \mathcal H_{\mathrm{poly}}(\RR^n)$ be as in Proposition \ref{prop:phase-family-of-zonal-polynomials}.
	Then there is  a fixed neighbourhood
	$\mathcal U_0\subset \mathcal{C}^\infty(\Sigma_0;\mathbb C)$ around the zero section, such that for all
	$s \gg 0$ and $\theta \in S^1$ there is a unique $\nu_{s, \theta}\in\mathcal U_0$ 
	such that
	$A_s(\nu_{s,\theta},P_{s, \theta})=0$.
	Moreover, $u_{P_{s, \theta},s,\nu_{s,\theta}}$ is a nondegenerate $\ZT$-harmonic section.
\end{theorem}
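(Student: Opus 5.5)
The plan is to apply a Nash--Moser implicit function theorem with approximate right inverse, in the form of \cite[Proposition~A.2]{HeSalm2026MetricPerturbations} and \cite[Theorem~A.11]{DonaldsonFabian25Calabi}, to the map $\mathcal F_{s,\theta}(\nu):=A_s(\nu,P_{s,\theta})$. It is defined on the neighbourhood $\mathcal U$ of Lemma~\ref{lem:uniform-moved-coefficients}. The key point is to check every hypothesis with constants independent of $s$ and $\theta$. Then the neighbourhood $\mathcal U_0$ and the smallness threshold are uniform, and a single application gives $\nu_{s,\theta}$ for all $s\gg0$.

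\textbf{Step 1: the initial error.} First I would verify that the initial error $\mathcal F_{s,\theta}(0)=A_{P_{s,\theta},s}$ is small in every $\mathcal C^m$. By Proposition~\ref{prop:phase-family-of-zonal-polynomials}, its average $A^{\mathrm{av}}_{P_{s,\theta},s,a}$ vanishes on each component. Proposition~\ref{prop:decay-rate_A_and_B} and the bound $\|P_{s,\theta}\|\le Ce^{3s/2}$ then give
\[
\|A_{P_{s,\theta},s}\|_{\mathcal C^{k+1}}\le C e^{\frac32 s-s\sqrt{\frac14+\mu_1/\varepsilon^2}} = Ce^{(\frac32-\frac{\sqrt{17}}{2})s},
\]
using $\varepsilon=\sqrt{\mu_1}/2$ from Definition~\ref{def:epsilon-in-metric}. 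This tends to $0$ as $s\to\infty$ for every $k$.

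\textbf{Step 2: the $B$-coefficient and the approximate inverse.} Similarly, $B_{P_{s,\theta},s}-\theta$ is bounded by $Ce^{(\frac32-\frac{\sqrt{25}}{2})s}=Ce^{-s}$ in every $\mathcal C^k$. So $B_s(0,P_{s,\theta})$ is uniformly close to the unit constant $\theta$. Lemma~\ref{lem:uniform-moved-coefficients} says $\nu\mapsto B_s(\nu,P_{s,\theta})$ is smooth tame with uniform constants. After shrinking $\mathcal U$ in a low norm $\mathcal C^{r}$, uniformly in $s$ and $\theta$, we get $|B_s(\nu,P_{s,\theta})|\ge\frac12$ on $\mathcal U$. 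Hence $R_{s,P}(\nu)=\frac23B_s^{-1}$ is a smooth tame family with uniform tame constants.

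The identities after \eqref{eq:donaldson-derivative-formula} show that $R_{s,P}$ is a left and right inverse of $D_\nu\mathcal F$, up to the error $\mathcal Q_s(\nu,\mathcal F(\nu),\cdot)$. This error is bilinear and tame with $s$-independent constants. This is exactly the ``inverse modulo the value $\mathcal F(\nu)$'' hypothesis of the Nash--Moser theorem in Zehnder's form. The quadratic loss term vanishes at a zero, which gives uniqueness in $\mathcal U_0$ from the left-inverse estimate.

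\textbf{Step 3: conclusion.} The abstract theorem produces a unique zero $\nu_{s,\theta}$ with $\|\nu_{s,\theta}\|_{\mathcal C^m}\to0$ as $s\to\infty$. At this zero $A\equiv0$, so $u_{P_{s,\theta},s,\nu_{s,\theta}}$ is $\ZT$-harmonic. Nondegeneracy follows from Step~2: $|B_s(\nu_{s,\theta},P_{s,\theta})|\ge\frac12$, so $B$ is nowhere zero.

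\textbf{Main obstacle.} The delicate point is uniformity in $s$. The abstract theorem needs the tame constants of $\mathcal F$, of its second derivative, and of $R$ to be bounded on a fixed neighbourhood, while $\|\mathcal F(0)\|$ goes below a threshold depending only on those constants. A naive estimate of $A_s(\nu,P_{s,\theta})$ carries the factor $\|P_{s,\theta}\|\sim e^{3s/2}$.

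To handle this, I would use the fact that $u_{P_{s,\theta},s}$ restricted to $\mathcal B_\Sigma$ is uniformly bounded, because of the neck decay in Proposition~\ref{prop:mode-estimate}. The diffeomorphisms $\Psi_\nu$ are supported in $\mathcal B_\Sigma$. So the moved solution can be written as $u_{P,s}$ plus a correction obtained by inverting $\Delta_s$, which is uniformly tame by Proposition~\ref{prop:uniform-invertibility} and \cite[Remark~3.4]{salm2024construction}. Its data is supported in $\mathcal B_\Sigma$ and depends only on $u_{P,s}|_{\mathcal B_\Sigma}$. This is what Lemma~\ref{lem:uniform-moved-coefficients} asserts, and I would rely on it there.
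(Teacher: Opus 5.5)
Your proposal is correct and follows essentially the same route as the paper. The paper also takes the decay estimates of Proposition~\ref{prop:decay-rate_A_and_B} (with the rate $\kappa=\sqrt{\frac14+\mu_1/\varepsilon^2}-\frac32$ that you computed) and the $s$-uniform tame bounds of Lemma~\ref{lem:uniform-moved-coefficients}. It combines these with the approximate inverse $R_{s,P}=\frac23 B_s^{-1}$ in \cite[Theorem~A.11]{DonaldsonFabian25Calabi}, and obtains uniqueness by citing \cite[Theorem~III.3.3.3]{Hamilton1982}, which is the left-inverse injectivity argument you sketch.
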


\begin{proof}
	By Proposition \ref{prop:decay-rate_A_and_B},
	\begin{align*}
		\| A_s(0, P_{s, \theta})\|_{\mathcal{C}^{k + 1, \beta + \frac{1}{2}}(\Sigma_0)} &= \mathcal{O}(e^{- \kappa\: s}), \\
		\| B_s(0, P_{s, \theta}) - \theta \|_{\mathcal{C}^{k, \beta + \frac{1}{2}}(\Sigma_0)} &= \mathcal{O}(e^{- \kappa\: s}),
	\end{align*}
	where $\kappa = \sqrt{\frac{1}{4} + \frac{\mu_1}{\varepsilon^2}} - \frac{3}{2} > 0$.
	Moreover, according to Lemma \ref{lem:uniform-moved-coefficients} there is some
	fixed derivative loss $d$, and a family of constants $C_k$, independent of $s$, such that for all $\nu \in \mathcal{U}$
	\[
	\begin{aligned}
		\|B_s(\nu,P_{s, \theta})-\theta\|_{\mathcal{C}^{k, \beta + \frac{1}{2}}(\Sigma_0)}
		&\le
		\|B_s(\nu,P_{s, \theta})-B_s(0,P_{s, \theta})\|_{\mathcal{C}^{k, \beta + \frac{1}{2}}(\Sigma_0)}
		+\|B_s(0,P_{s,\theta})-\theta\|_{\mathcal{C}^{k, \beta + \frac{1}{2}}(\Sigma_0)} \\
		&\le
		C_k\|\nu\|_{\mathcal{C}^{k+d, \beta + \frac{1}{2}}(\Sigma_0)}
		+C_k e^{-\kappa s}.
	\end{aligned}
	\]
	As $|\theta| = 1$, we can find a neighbourhood $\mathcal{U}_0 \subset \mathcal C^\infty(\Sigma_0, \CC)$, such that $R_{s, P_{s, \theta}}$ is a uniformly smooth tame map on $\mathcal{U}_0$.
	Using $\| A_s(0, P_{s, \theta})\|_{\mathcal{C}^{k + 1, \beta + \frac{1}{2}}(\Sigma_0)} = \mathcal{O}(e^{- \kappa\: s})$, the result from \cite[Theorem~A.11]{DonaldsonFabian25Calabi} implies there is a $\nu_{s, \theta} \in \mathcal{U}_0$ such that 
	$$
	A_s(\nu_{s, \theta}, P_{s, \theta}) = 0.
	$$

	By \cite[Theorem~III.3.3.3]{Hamilton1982} we can shrink $\mathcal{U}_0$ to make $\nu_{s, \theta}$ unique.
	Following the details of this proof, the shrinking can be done uniformly in $s$ and $\theta$.
\end{proof}

\begin{proof}[Proof of Theorem \ref{theorem:euclidean-realization}]
The construction in Theorem \ref{thm:nash-moser-correction} produces a small normal deformation $\Sigma'$ of $\Sigma$. Recall, there is a compactly supported diffeomorphism $\Phi$ of $\mathbb R^n$ such that $\Phi(\Sigma)=\Sigma'$. Pulling back the metric, line bundle, and harmonic section by $\Phi$ gives the
formulation as in the theorem with singular set exactly $\Sigma$. Since $\Phi$ is the
identity near infinity, this pullback preserves the Euclidean end and the
asymptotic data of the model.
\end{proof}

For every choice of $s \gg 0$ and $\theta$, the Nash--Moser theorem in \cite{DonaldsonFabian25Calabi} yields a choice of $\nu_{s, \theta}$. Viewing $\theta$ as a tame parameter, we can use \cite[Theorem III.3.3.1]{Hamilton1982} to conclude
\begin{corollary}
	\label{cor:parametric-nash-moser-correction}
	For every $s \gg 1$, the map $
	\theta \mapsto \nu_{s, \theta}
	$
	is a smooth tame map.
\end{corollary}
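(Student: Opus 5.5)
The plan is to treat $\theta$ as an additional tame parameter in the nonlinear problem $A_s(\nu,P_{s,\theta})=0$ and then apply the smooth-dependence-on-parameters part of the Nash--Moser theorem. Fix $s\gg1$. First, I would show that the map
\[
\mathcal F_s\colon S^1\times\mathcal U_0\to \mathcal C^\infty(\Sigma_0;\CC),\qquad (\theta,\nu)\mapsto A_s(\nu,P_{s,\theta}),
\]
is smooth tame jointly in $(\theta,\nu)$. By the proof of Proposition~\ref{prop:phase-family-of-zonal-polynomials}, $\theta\mapsto P_{s,\theta}$ is the restriction of a real-linear map $\CC\to V$ into a finite-dimensional space. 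Moreover, $u_{P,s,\nu}$ is real linear in $P$, so $A_s(\nu,P)$ is real linear in $P$. Hence
\[
\mathcal F_s(\theta,\nu)=\operatorname{Re}\theta\, A_s(\nu,P_1)+\operatorname{Im}\theta\, A_s(\nu,P_2)
\]
for two fixed polynomials $P_1,P_2\in V$. Lemma~\ref{lem:uniform-moved-coefficients}, applied to each $P_i$ (the argument there works for any fixed harmonic polynomial), gives smooth tameness in $\nu$. Dependence on $\theta$ is through smooth scalar coefficients, so $\mathcal F_s$ is smooth tame on $S^1\times\mathcal U_0$. The same argument applies to $B_s(\nu,P_{s,\theta})$.

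Second, I would produce a smooth tame family of inverses for $D_\nu\mathcal F_s$ at every solution point. At $(\theta,\nu_{s,\theta})$ we have $A_s=0$, so the quadratic term $\mathcal Q_s$ in \eqref{eq:donaldson-derivative-formula} vanishes. The derivative is then exactly $D_\nu A_s[\dot\nu]=\tfrac32 B_s(\nu_{s,\theta},P_{s,\theta})\dot\nu$, which is multiplication by a nowhere-vanishing function. The proof of Theorem~\ref{thm:nash-moser-correction} gives $B_s$ close to $\theta$, so $R_{s,P_{s,\theta}}(\nu)=\tfrac23B_s^{-1}$ is an exact inverse there. This inverse is smooth tame in $(\theta,\nu)$ near the solution set, because $B_s$ is and inversion of a function bounded away from zero is smooth tame.

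Third, I would invoke \cite[Theorem III.3.3.1]{Hamilton1982} (the implicit function theorem for tame maps). Alternatively, I would run the parametric version of the Nash--Moser scheme of \cite{DonaldsonFabian25Calabi}, which treats $\theta$ as a parameter. Either route gives, near each $\theta_0$, a smooth tame local solution map $\theta\mapsto\tilde\nu(\theta)$ with $\tilde\nu(\theta_0)=\nu_{s,\theta_0}$.

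The main obstacle is globalizing the local solution maps and matching them with the $\nu_{s,\theta}$ from Theorem~\ref{thm:nash-moser-correction}. Hamilton's implicit function theorem has a hypothesis of a tame inverse on a whole neighbourhood of $(\theta_0,\nu_{s,\theta_0})$, not just on the zero set. I would meet this by using $R_{s,P}$ as an approximate inverse with quadratic error bilinear in $A_s$, in the Nash--Moser form (\cite[Theorem III.3.3.1]{Hamilton1982} allows exactly this). Once local smooth branches exist, the uniqueness statement in Theorem~\ref{thm:nash-moser-correction} in the fixed neighbourhood $\mathcal U_0$ forces $\tilde\nu(\theta)=\nu_{s,\theta}$, provided $\tilde\nu(\theta)$ stays in $\mathcal U_0$. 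Continuity of $\tilde\nu$ together with uniqueness for all $\theta$ handles this after shrinking the neighbourhood of $\theta_0$. The local branches therefore patch together over the compact $S^1$, and $\theta\mapsto\nu_{s,\theta}$ is smooth tame.
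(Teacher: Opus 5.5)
Your proposal is correct and follows essentially the same route as the paper. The paper also applies Hamilton's implicit function theorem with quadratic error (Theorem III.3.3.1) at the exact solution $(\nu_{s,\theta_0},\theta_0)$, shrinks the $\theta$-neighbourhood so that the local branch stays in $\mathcal U_0$, and then uses the uniqueness in Theorem~\ref{thm:nash-moser-correction} to identify that branch with $\nu_{s,\theta}$. Your additional checks fill in hypotheses the paper leaves implicit: joint smooth tameness of $(\theta,\nu)\mapsto A_s(\nu,P_{s,\theta})$, via real-linearity of $\theta\mapsto P_{s,\theta}$ and $P\mapsto u_{P,s,\nu}$, and the approximate inverse $R_{s,P}$ with error bilinear in $A_s$.
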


\begin{proof}
For each $\theta\in S^1$, let $\nu_{s, \theta}\in\mathcal U_0$ be the unique
solution given by Theorem~\ref{thm:nash-moser-correction}. Fix
$\theta_0\in S^1$. Applying Hamilton's implicit-function theorem with
quadratic error
\cite[Theorem~III.3.3.1]{Hamilton1982} at the exact solution
$(\nu_{s, \theta_0},\theta_0)$ gives a neighbourhood $W \subset S^1$ of $\theta_0$ and a smooth tame map $\widetilde\nu_s:W\longrightarrow
\mathcal C^\infty(\Sigma_0;\mathbb C)$
such that
\[
\widetilde\nu_s(\theta_0)=\nu_{s, \theta_0},
\qquad
A_s(\widetilde\nu_s(\theta),P_{s, \theta})=0.
\]
After shrinking $W$, we may assume that
$\widetilde\nu_s(\theta)\in\mathcal U_0$. The uniqueness in
Theorem~\ref{thm:nash-moser-correction} then gives
$\widetilde\nu_s(\theta)=\nu_{s,\theta}$ for $\theta\in W$. Thus,
$\theta\mapsto\nu_{s, \theta}$ is smooth.
\end{proof}

\subsection{Vanishing of the Euclidean flux}

Fix a parallel unit trivialization of $\I$ on $\mathcal A_\infty$.  For a
harmonic section $u$ with polynomial asymptotic term $P$, define its
Euclidean flux by
\[
	\Flux(u):=\int_{S_R^{n-1}}\star_{g_{\mathrm{Eucl}}}\d u
	=\int_{S_R^{n-1}}\partial_r u\,\mathrm dA,
\]
where $S_R^{n-1}$ lies in the Euclidean end.  The integral is independent of
$R$ when $R$ is sufficiently large.  The exterior spherical-harmonic expansion gives
\[
	u=P+\frac{\Flux(u)}{(2-n)\lvert S^{n-1}\rvert}r^{2-n}
	+O(r^{1-n}).
\]
Thus $\Flux(u)=0$ precisely when $\star_{g_{\mathrm{Eucl}}}\d u$ is exact
near infinity.  Reversing the trivialization only changes the sign of both $u$
and $\Flux(u)$, so the condition $\Flux(u)=0$ is intrinsic.

We aim to find a $\ZT$-harmonic function with zero Euclidean flux. However, the Nash--Moser correction does not fix the flux. For this reason, we considered a 2-parameter family $P_{s, \theta}$ in 
Section~\ref{sec:limit-coefficients-metric}. We use the fact that this 2-parameter family satisfies $P_{s,\theta}=-P_{s,-\theta}$ to show the following:

\begin{theorem}
	\label{thm:flux-normalization}
	For every $s\gg 1$, there are $\theta_s\in S^1$ and a deformation $\nu_s$ of $\Sigma_0$ such that, with
	$P_s:=P_{s,\theta_s}\in V$, the function $u_{P_s,s,\nu_s}$ is nondegenerate,
	$\ZT$-harmonic, and has zero Euclidean flux.
\end{theorem}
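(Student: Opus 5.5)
Fix $s\gg1$. Theorem~\ref{thm:nash-moser-correction} and Corollary~\ref{cor:parametric-nash-moser-correction} give, for every $\theta\in S^1$, a unique $\nu_{s,\theta}\in\mathcal U_0$ with $A_s(\nu_{s,\theta},P_{s,\theta})=0$, depending smoothly on $\theta$. The section $u_\theta:=u_{P_{s,\theta},s,\nu_{s,\theta}}$ is then nondegenerate and $\ZT$-harmonic. The plan is to show that $F(\theta):=\Flux(u_\theta)$ is a continuous real function on $S^1$ with $F(-\theta)=-F(\theta)$. The intermediate value theorem on a half-circle from $\theta_0$ to $-\theta_0$ then gives $\theta_s$ with $F(\theta_s)=0$, and we set $\nu_s:=\nu_{s,\theta_s}$.

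\textbf{Continuity.} $\Flux(u_\theta)$ is an integral of $\partial_r u_\theta$ over a fixed sphere $S_R^{n-1}$ in $\mathcal A_\infty$, where the trivialization of $\I_{\nu}$ is fixed because $\Psi_\nu=\operatorname{Id}$ outside the boundary region. It therefore suffices to know that $u_\theta$ depends continuously on $\theta$ in $\mathcal C^1$ on that sphere. Write $u_{P,s,\nu}=\chi_\infty P+v$, where $v$ solves $\Delta_{s,\nu}v=-\Delta(\chi_\infty P)$ after pulling back by $\Psi_\nu$. The map $(\nu,P)\mapsto\Psi_\nu^*u_{P,s,\nu}$ is smooth tame by Lemma~\ref{lem:uniform-moved-coefficients}, and it is linear in $P$. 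Both $\theta\mapsto P_{s,\theta}$ (the restriction of a real-linear map by Proposition~\ref{prop:phase-family-of-zonal-polynomials}) and $\theta\mapsto\nu_{s,\theta}$ are continuous. Hence $F$ is continuous.

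\textbf{Oddness.} This is the key step, and it rests on uniqueness. Since $P_{s,-\theta}=-P_{s,\theta}$ and $u_{P,s,\nu}$ is linear in $P$, we have $u_{P_{s,-\theta},s,\nu}=-u_{P_{s,\theta},s,\nu}$ for every $\nu$. Therefore $A_s(\nu,P_{s,-\theta})=-A_s(\nu,P_{s,\theta})$, and in particular $A_s(\nu_{s,\theta},P_{s,-\theta})=0$. Because $\nu_{s,\theta}\in\mathcal U_0$, the uniqueness statement in Theorem~\ref{thm:nash-moser-correction} forces $\nu_{s,-\theta}=\nu_{s,\theta}$. Consequently $u_{-\theta}=-u_\theta$, and by linearity of the flux $F(-\theta)=-F(\theta)$. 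The main point to check is that this uniqueness holds on a neighbourhood $\mathcal U_0$ that is independent of $\theta$, which is what Theorem~\ref{thm:nash-moser-correction} asserts. Note also that the sign of $B$ flips, $B_s(\nu_{s,\theta},P_{s,-\theta})=-B_s(\nu_{s,\theta},P_{s,\theta})$, so nondegeneracy is preserved under $\theta\mapsto-\theta$.

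\textbf{Conclusion.} Choose any $\theta_0$. Either $F(\theta_0)=0$, or $F(\theta_0)$ and $F(-\theta_0)$ have opposite signs, so continuity along an arc joining $\theta_0$ to $-\theta_0$ yields $\theta_s$ with $F(\theta_s)=0$. With $P_s:=P_{s,\theta_s}$ and $\nu_s:=\nu_{s,\theta_s}$, the section $u_{P_s,s,\nu_s}$ is nondegenerate and $\ZT$-harmonic by Theorem~\ref{thm:nash-moser-correction}, and it has zero Euclidean flux.
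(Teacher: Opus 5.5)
Your proposal is correct and follows the paper's proof essentially verbatim. You use uniqueness of the Nash--Moser correction in the $\theta$-independent neighbourhood $\mathcal U_0$, together with $P_{s,-\theta}=-P_{s,\theta}$ and linearity of $P\mapsto u_{P,s,\nu}$, to get $\nu_{s,-\theta}=\nu_{s,\theta}$ and $u_{s,-\theta}=-u_{s,\theta}$, so $\theta\mapsto\Flux(u_{s,\theta})$ is a continuous odd function on $S^1$ and must vanish somewhere. Your continuity argument is slightly more detailed than the paper, which simply asserts smooth dependence on $\theta$.
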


\begin{proof}
	Corollary~\ref{cor:parametric-nash-moser-correction} gives a
	smooth family of unique small normal deformations $\nu_{s,\theta}$ such that
	$A_s(\nu_{s,\theta},P_{s,\theta})=0$ with a nowhere-vanishing $B$-coefficient. Set $u_{s,\theta}:=u_{P_{s,\theta},s,\nu_{s,\theta}}$.
Since $P_{s,-\theta}=-P_{s,\theta}$, uniqueness gives
\[
\nu_{s,-\theta}=\nu_{s,\theta},
\qquad
u_{s,-\theta}=-u_{s,\theta}.
\]

The harmonic sections depend smoothly on $\theta$, so the map
	$\theta\mapsto\Flux(u_{s,\theta})$ is continuous and odd, and hence
	vanishes at some $\theta_s\in S^1$. Setting
	$P_s:=P_{s,\theta_s}$ and $\nu_s:=\nu_{s,\theta_s}$ proves the theorem.
\end{proof}

}


\section{Gluing Euclidean models into closed manifolds}
\label{sec:compact-zero-flux-gluing}
We apply the Calabi surgery construction developed in \cite{HeChenYan} to the zero-flux Euclidean models obtained above. 
The gluing argument has two main stages, matching and insertion. In the first subsection, 
we identify the first nonzero homogeneous term in the Taylor expansion of a local potential on the closed manifold and formulate 
the corresponding matching condition for the asymptotic polynomial of the Euclidean model. 
In the second subsection, we rescale a matched model and insert it into the closed manifold. 
The matching condition ensures that the interpolated $1$-form remains nonvanishing on the transition annulus, 
while the zero-flux condition removes the cohomological obstruction to interpolating the corresponding Hodge duals by a closed $(n-1)$-form. 
In the final subsection, we apply the resulting insertion theorem to linear and higher-degree models.

\subsection{Polynomial matching}
\label{subsec:compact-matching-data}
Let $(M^n,g)$ be a closed oriented Riemannian manifold, where
$n\geq 3$.  Let $(v,\Sig_M,\I_M)$ be a nonzero $\ZT$-harmonic $1$-form on $M$, in the
sense of \cite{HeChenYan}.  Fix a
point $q\in M\setminus\Sig_M$ and a geodesic ball about $q$ that is disjoint
from $\Sig_M$.  The flat line bundle $\I_M$ is trivial on this ball.  After
choosing a trivialization of $\I_M$ on this ball, write $v=\d f_q$, where $f_q$ is a single-valued harmonic function and
$f_q(q)=0$.

Unique continuation shows that $f_q$ has finite vanishing order at $q$.  Let
$P_q$ be its first nonzero homogeneous Taylor term, and set
$m=\deg P_q\geq 1$.  In oriented orthonormal geodesic coordinates $x$
centered at $q$, the polynomial $P_q$ is nonzero and harmonic with respect to
the Euclidean metric determined by $g(q)$.  Moreover,
$\nabla^j(f_q-P_q)=\mathcal{O}(|x|^{m+1-j})$ for $0\leq j\leq 2$.
Changing the local trivialization of $\I_M$ replaces $P_q$ by $-P_q$.  Changing the
oriented orthonormal coordinates acts on $P_q$ by an element of $SO(n)$.
Thus the polynomial data are naturally defined only up to sign and an
oriented orthogonal change of coordinates.

\begin{definition}
\label{def:admissible-matching-point}
The point $q$ is a \textbf{matching point} if $\d P_q$ has no zero in
$\RR^n\setminus\{0\}$.  We call $P_q$ the matching polynomial.
\end{definition}

Homogeneity gives $|\d P_q(x)|\geq c|x|^{m-1}$ for some $c>0$.  If
$m\geq2$, the matching condition and the Taylor estimate show that $q$ is an
isolated zero of $v$.  Every regular point of $v$ is a matching point with
$m=1$ and a linear matching polynomial.

We next formulate the corresponding condition on the Euclidean side.  Only
the highest-degree homogeneous term of the asymptotic polynomial must agree
with $P_q$.  The lower-degree terms become lower-order errors after
rescaling.

\begin{definition}
\label{def:euclidean-model-matching}
Let $g_e$ denote a smooth Riemannian metric on
$\RR^n$ that is Euclidean outside a compact set.  Let
$(u,\Sig_{\RR^n},\I_{\RR^n})$ be a nondegenerate $\ZT$-harmonic
function on $(\RR^n,g_e)$ with zero flux, and let $P$ be its asymptotic
polynomial.  We call $(u,\Sig_{\RR^n},\I_{\RR^n})$ on $(\RR^n,g_e)$ a
\emph{zero-flux Euclidean model}.  Let $P_q$ be a matching polynomial of
degree $m$.  We say that the model \emph{matches} $P_q$ if $P$ has degree
$m$ and there are
$c\in\RR^*$ and $A\in SO(n)$ such that $P^{(m)}(x)=cP_q(Ax)$, where
$P^{(m)}$ is the highest-degree homogeneous term of $P$.
\end{definition}
In particular, if $P$ has degree $1$, then the model matches the matching
polynomial at every regular point of $v$.

For a matched pair, rotate the entire Euclidean model and multiply $u$ by a
nonzero real constant so that $P^{(m)}=P_q$.  These operations preserve the
zero-flux condition, so we may assume this normalization throughout the
gluing construction.  Write $P=P_q+P_{<m}$, where $P_{<m}$ is harmonic and
has degree at most $m-1$.  Thus
$\nabla^j(u-P_q)=O(r_\infty^{m-1-j})$ for $0\leq j\leq 2$.

\subsection{Zero-flux insertion}
\label{subsec:zero-flux-gluing-theorem}

Given a matching point and a Euclidean model matching its polynomial, we
prove that the model can be inserted into the closed manifold.
Before we prove this, we first recall some elementary properties of differential forms:

\begin{lemma}
\label{lem:calabi-metric-preparations}
Let $M$ be a manifold of dimension $n$ and let $\alpha \in \Omega^1(M)$ and let $\beta \in \Omega^{n-1}(M)$ such that 
$\alpha \wedge \beta$ is nowhere vanishing.
There exists a unique $N \in \Gamma(TM)$ such that $\iota_N(\alpha\wedge \beta) = \beta$.
Moreover, for the canonical projection map $\pi(v) := v - \frac{\alpha(v)}{\alpha(N)}N$ from $\Gamma(TM)$ to the bundle $H := \ker \alpha$, $\beta = \pi^* \mu$ for some volume form $\mu$ on $H$.
\end{lemma}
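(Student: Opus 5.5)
The whole statement is pointwise linear algebra, so I will work in a single tangent space $V=T_xM$ and then observe that every construction is canonical and depends smoothly on $\alpha$ and $\beta$. Write $\omega:=\alpha\wedge\beta$, which is a volume form on $V$ by assumption.

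\emph{Existence and uniqueness of $N$.} The contraction map $V\to\Lambda^{n-1}V^*$, $v\mapsto\iota_v\omega$, is injective because $\omega\neq0$. Both spaces have dimension $n$, so it is an isomorphism. Hence there is a unique $N\in V$ with $\iota_N\omega=\beta$. Since the inverse of this isomorphism depends smoothly on $\omega$, the vector field $N$ is smooth. It is also useful to record that $\alpha(N)\neq0$: wedging $\iota_N\omega=\beta$ with $\alpha$ gives $\alpha\wedge\beta=\alpha\wedge\iota_N\omega$. The interior-product identity $\iota_N(\alpha\wedge\omega)=\alpha(N)\omega-\alpha\wedge\iota_N\omega$, together with $\alpha\wedge\omega=0$ (it is an $(n+1)$-form), gives $\alpha\wedge\iota_N\omega=\alpha(N)\omega$. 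Therefore $\omega=\alpha(N)\omega$, so $\alpha(N)=1$. In particular $\pi$ is well defined and equals $v\mapsto v-\alpha(v)N$.

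\emph{The factorisation $\beta=\pi^*\mu$.} Since $\alpha(N)=1$, the map $\pi$ is the projection onto $H=\ker\alpha$ along $\mathbb R N$, and $V=H\oplus\mathbb RN$. Define $\mu:=\beta|_H$, an $(n-1)$-form on the $(n-1)$-dimensional space $H$. I claim that $\iota_N\beta=\iota_N\iota_N\omega=0$. Given this, for any $v_1,\dots,v_{n-1}$ I expand each $v_i=\pi(v_i)+\alpha(v_i)N$ using multilinearity. Every term containing $N$ vanishes, so $\beta(v_1,\dots,v_{n-1})=\mu(\pi v_1,\dots,\pi v_{n-1})$, that is, $\beta=\pi^*\mu$.

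It remains to show that $\mu$ is nowhere vanishing. Take a basis $h_1,\dots,h_{n-1}$ of $H$. Then
\[
\omega(N,h_1,\dots,h_{n-1})=(\iota_N\omega)(h_1,\dots,h_{n-1})=\mu(h_1,\dots,h_{n-1}).
\]
The left-hand side is nonzero because $N,h_1,\dots,h_{n-1}$ is a basis of $V$ and $\omega$ is a volume form. Hence $\mu$ is a volume form on $H$, depending smoothly on $x$.

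None of these steps is a real obstacle. The only point needing care is the sign bookkeeping in showing $\alpha(N)=1$, which is what guarantees $\pi$ is defined; I would double-check this by computing in a basis $e_1=N,e_2,\dots,e_n$ with $e_2,\dots,e_n$ a basis of $H$.
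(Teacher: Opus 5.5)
Your argument is correct, and its first half takes a different route from the paper's.

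\emph{The paper's route.} It chooses an auxiliary Riemannian metric and a (local) orientation, writes $\beta=*_g\sigma$, and builds an adapted local coframe with $\alpha=e^1$ and $\beta=e^2\wedge\cdots\wedge e^n$. Then $N$ is just the dual vector $e_1$, and both the uniqueness of $N$ and $\alpha(N)=1$ are read off from this normal form.

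\emph{Your route.} You get existence, uniqueness and smoothness of $N$ at once from the bundle isomorphism $v\mapsto\iota_v(\alpha\wedge\beta)$. You then derive $\alpha(N)=1$ from the interior-product identity, and your sign bookkeeping there is right. No auxiliary metric or orientation is needed.

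\emph{The common part.} The second half is the same in both: $\iota_N\beta=\iota_N\iota_N(\alpha\wedge\beta)=0$, so $\beta$ is determined by its restriction to $H$. Here you are more explicit than the paper. You spell out the multilinear expansion along $T_xM=H\oplus\RR N$ and check that $\mu=\beta|_H$ is nowhere zero, which the paper leaves implicit.

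\emph{What the coframe buys.} The paper reuses its normal form \eqref{eq:nice-local-coordinates-between-forms} as the starting point of the proof of Proposition~\ref{prop:calabi-metric-realization}. That normal form follows immediately from your version. Take $e_1=N$ together with a local frame $e_2,\dots,e_n$ of $H$ with $\mu(e_2,\dots,e_n)=1$. The dual coframe then satisfies $e^1=\alpha$ and $e^2\wedge\cdots\wedge e^n=\beta$.
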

\begin{proof}
  We first claim that at every $p \in M$, there is a local coframe $\{e^1, \ldots, e^n\}$ such that 
  \begin{equation}
    \label{eq:nice-local-coordinates-between-forms}
    \alpha = e^1 \quad \text{ and } \quad \beta = e^2 \wedge \ldots \wedge e^n.
  \end{equation}
  Indeed, pick a Riemannian metric $g$ and orientation on $M$. Then $\beta = *_g \sigma$ for some $\sigma \in \Omega^1(M)$.
  As $\alpha \wedge \beta$ is nowhere vanishing, both $\alpha$ and $\beta$ are nowhere vanishing and so $\sigma$ is nowhere vanishing.
  Hence, let $\tilde e^1 = \frac{\sigma}{|\sigma|_g}$ and extend it into a local orthonormal coordinate chart $\{\tilde e^1, \ldots, \tilde e^n\}$.
  In this frame, 
  $$
  \beta = *_g \sigma = |\sigma| \cdot \tilde e^2 \wedge \ldots \wedge \tilde e^n.
  $$
  We claim that the forms $\{e^1 := \alpha, e^2 := |\sigma| \: \tilde e^2, e^3 := \tilde{e}^3, \ldots, e^n := \tilde{e}^n \}$ form a local coframe satisfying \eqref{eq:nice-local-coordinates-between-forms}. The only non-obvious thing is to show that $\{e^1, \ldots e^n \}$ are linearly independent. If they weren't, then $e^1 \wedge \ldots \wedge e^n$ would vanish somewhere, but
  $$
  e^1 \wedge \ldots \wedge e^n = \alpha \wedge \beta \not = 0.
  $$

  Using this claim, let ${e_1, \ldots, e_n}$ be the dual frame of $\{e^1, \ldots, e^n\}$. Notice that $N:= e_1$ is the unique solution to $\iota_N (\alpha \wedge \beta) = \beta$. As $\alpha(N) = 1$, $\pi$ is indeed a projection map from $TM$ to $\ker \alpha$. Also notice that $\pi(N) = 0$.

  To show $\beta = \pi^* \mu$ for some volume form $\mu$ on $\ker \alpha$, it is sufficient to show that $\iota_N \beta = \iota_N \pi^* \mu = 0$.
  Because $\iota_N(\alpha \wedge \beta) = \beta$,
  $
  \iota_N \beta = \iota_N \iota_N (\alpha \wedge \beta) = 0.
  $
  Simultaneously, 
  $
  \iota_N \pi^* \mu = \mu(\pi(N), \ldots) = \mu(0, \ldots) = 0,
  $
  and we conclude the proof.
\end{proof}

\begin{proposition}
\label{prop:calabi-metric-realization}
Let $M$ be an oriented manifold of dimension $n$ and let $\alpha \in \Omega^1(M)$ and let $\beta \in \Omega^{n-1}(M)$ such that 
$\alpha \wedge \beta > 0$.
Let $\pi\colon \Gamma(TM) \to \Gamma(\ker \alpha)$ be as defined in Lemma \ref{lem:calabi-metric-preparations}.
Then $g$ is a Riemannian metric on $M$ such that $*_g \alpha = \beta$ if and only if 
there exists a metric $h$ on the bundle $\ker \alpha$ such that $\pi^* \operatorname{vol}_h = f \cdot \beta$ for some nowhere vanishing function $f$ and
\begin{equation}
  \label{eq:canonical-form-metric-g}
  g = f^2 \cdot \alpha^2 + \pi^*h.
\end{equation}
\end{proposition}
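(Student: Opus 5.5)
I will prove the equivalence pointwise, working in the adapted coframe from Lemma~\ref{lem:calabi-metric-preparations}. There, $\alpha = e^1$, $\beta = e^2\wedge\cdots\wedge e^n$, $N = e_1$, and $\ker\alpha = \operatorname{span}\{e_2,\ldots,e_n\}$. In this frame $\pi(v) = v - e^1(v)e_1$. Hence $\pi^*h(v,w) = h(\pi v,\pi w)$ is the symmetric tensor that annihilates $N$ and restricts to $h$ on $\ker\alpha$. A symmetric $2$-tensor of the form $f^2\alpha^2 + \pi^*h$ is therefore positive definite: it is block diagonal, with $e_1$ orthogonal to $\ker\alpha$ and $|e_1|^2 = f^2$. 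Throughout I fix the orientation so that $\alpha\wedge\beta>0$ and take $f>0$; this is allowed since $f$ enters only through $f^2$ and the sign condition on $\beta$.

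\emph{($\Leftarrow$).} Let $g = f^2\alpha^2 + \pi^*h$ with $\pi^*\operatorname{vol}_h = f\beta$. Choose an $h$-orthonormal oriented frame $\epsilon_2,\ldots,\epsilon_n$ of $\ker\alpha$, and let $\epsilon^2,\ldots,\epsilon^n$ be the dual forms extended by $\epsilon^j(N)=0$, so that $\epsilon^j = \pi^*(\cdot)$. Then $\{f\alpha,\epsilon^2,\ldots,\epsilon^n\}$ is a $g$-orthonormal coframe, and $\pi^*\operatorname{vol}_h = \epsilon^2\wedge\cdots\wedge\epsilon^n$. This coframe is oriented, because $f\alpha\wedge\pi^*\operatorname{vol}_h = f^2\alpha\wedge\beta>0$. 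Consequently
\[
*_g\alpha = f^{-1}*_g(f\alpha) = f^{-1}\,\epsilon^2\wedge\cdots\wedge\epsilon^n = f^{-1}\pi^*\operatorname{vol}_h = \beta.
\]

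\emph{($\Rightarrow$).} Suppose $*_g\alpha=\beta$. The key observation is that $\ker\alpha$ is $g$-orthogonal to $N$. Indeed, for $v\in\ker\alpha$ we have $\iota_v\beta = \iota_v *_g\alpha = \pm *_g(\alpha\wedge v^\flat)$. On the other hand, $\iota_N\iota_v\beta = \pm\iota_v\iota_N\beta = 0$, since $\iota_N\beta = 0$ by the proof of Lemma~\ref{lem:calabi-metric-preparations}. From this I will conclude that $N$ is $g$-parallel to $\alpha^\sharp$.

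A more direct route, which I would actually write down, goes as follows. Note that $*_g\alpha = \iota_{\alpha^\sharp}\operatorname{vol}_g$, and $\alpha\wedge*_g\alpha = |\alpha|^2_g\operatorname{vol}_g$. Therefore
\[
\iota_{\alpha^\sharp/|\alpha|^2}(\alpha\wedge\beta) = \beta - \alpha\wedge\iota_{\alpha^\sharp/|\alpha|^2}\beta = \beta,
\]
where the last term vanishes because $\iota_{\alpha^\sharp}\iota_{\alpha^\sharp}\operatorname{vol}_g=0$. By the uniqueness part of Lemma~\ref{lem:calabi-metric-preparations}, $N = \alpha^\sharp/|\alpha|_g^2$, so $N\perp_g\ker\alpha$. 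Setting $f := 1/|N|_g = |\alpha|_g$, the $g$-orthogonal splitting $TM = \RR N\oplus\ker\alpha$ gives $g = f^2\alpha^2 + \pi^*h$ with $h := g|_{\ker\alpha}$.

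Finally, I apply the computation of ($\Leftarrow$) to this $h$. It gives $*_g\alpha = f^{-1}\pi^*\operatorname{vol}_h$, which together with $*_g\alpha = \beta$ yields $\pi^*\operatorname{vol}_h = f\beta$.

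The only delicate step is identifying $N$ with $\alpha^\sharp/|\alpha|^2$, i.e., the orthogonality. Apart from that, the orientation conventions $f>0$ and $\alpha\wedge\beta>0$ must be tracked so that no sign appears in the formula for $*_g$.
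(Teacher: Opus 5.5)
Your strategy works, but the ($\Rightarrow$) direction as written has a concrete error in the choice of $f$.

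\textbf{The error.} Once you know $N\perp_g\ker\alpha$, writing $v=\alpha(v)N+\pi v$ gives $g=|N|_g^2\,\alpha^2+\pi^*h$ with $h=g|_{\ker\alpha}$. So the function in $g=f^2\alpha^2+\pi^*h$ must be $f=|N|_g=1/|\alpha|_g$. This is also what your own opening remark $|e_1|^2=f^2$ (with $e_1=N$) says.

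You instead set $f:=1/|N|_g=|\alpha|_g$. With that choice $g=f^{-2}\alpha^2+\pi^*h$, so the ($\Leftarrow$) computation no longer applies. Done correctly, it yields $\pi^*\operatorname{vol}_h=f^{-1}\beta$. Hence both asserted identities fail unless $|\alpha|_g\equiv 1$. For instance, take the Euclidean metric with $\alpha=c\,\mathrm{d}x^1$ and $\beta=*\alpha=c\,\mathrm{d}x^2\wedge\cdots\wedge\mathrm{d}x^n$. Then $N=c^{-1}\partial_1$, and the correct $f$ is $1/c$, not $c$. The repair is simply $f:=|N|_g$.

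\textbf{A smaller point in the same step.} The orientation check you borrow from ($\Leftarrow$) used the hypothesis $\pi^*\operatorname{vol}_h=f\beta$, which is now the conclusion. Instead, orient $\ker\alpha$ by $\beta|_{\ker\alpha}$, which is a volume form there by Lemma~\ref{lem:calabi-metric-preparations}. Then $\pi^*\operatorname{vol}_h$ is a positive multiple of $\beta$, and $\{f\alpha,\epsilon^2,\ldots,\epsilon^n\}$ is an oriented coframe.

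\textbf{Comparison with the paper.} With these fixes your proof is complete and takes a slightly different route from the paper's.

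For ($\Rightarrow$), the paper works in the adapted coframe of the lemma. It compares $e^i\wedge *_g e^1=g(e^i,e^1)\operatorname{vol}_g$ with $e^i\wedge\beta$, which vanishes for $i\ge 2$. This shows $e_1\perp_g\ker\alpha$, and the paper then reads off $f$ from $\beta=f^{-1}\pi^*\operatorname{vol}_h$. You instead identify $N=\alpha^\sharp/|\alpha|_g^2$ invariantly, using $*_g\alpha=\iota_{\alpha^\sharp}\operatorname{vol}_g$ and the uniqueness of $N$. This gives the orthogonality and an explicit formula for $f$ at once.

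For ($\Leftarrow$), the paper checks positive definiteness and then appeals to the identity $\beta=f^{-1}\pi^*\operatorname{vol}_h$ obtained in the other direction. Your $g$-orthonormal coframe $\{f\alpha,\epsilon^2,\ldots,\epsilon^n\}$ actually proves $*_g\alpha=f^{-1}\pi^*\operatorname{vol}_h$ for every metric of this form.
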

\begin{proof}
Let $\{e^1, \ldots, e^n\}$ be a local oriented co-frame such that \eqref{eq:nice-local-coordinates-between-forms} holds.
Let $\{e_1, \ldots, e_n\}$ be its dual frame.
Assume that $g$ is a Riemannian metric such that $* \alpha = \beta$.
Then for each index $i$
\begin{align*}
  e^i \wedge * \alpha =& e^i \wedge * e^1 = g(e^i, e^1) \: \operatorname{vol}_g \\
  =& e^i \wedge \beta = \begin{cases}
    e^1 \wedge \ldots \wedge e^n & \text{if } i = 1 \\
    0 & \text{else.}
  \end{cases}
\end{align*}
Hence, $e^1$ is orthogonal to $\langle e^2, \ldots, e^n \rangle$ and we can write the metric $g$ locally as a block matrix. Dualizing this shows that $e_1$ is orthogonal to $\langle e_2, \ldots, e_n \rangle = \ker \alpha$. By noticing that $\pi(e_1) = 0$ and $\pi(e_i) = e_i$ for $i \not = 1$,
we conclude that there is a metric $h$ on $\ker \alpha$ and a positive smooth function $f$ such that
$$
g = f^2 \cdot \alpha^2 + \pi^* h.
$$
To determine $f$, use that
\begin{equation}
  \label{eq:relationship-staralpha-beta}
  \beta = * \alpha 
  = \frac{1}{f} \pi^* \operatorname{vol}_h.
\end{equation}
This proves one direction of the claim. For the converse, let $h$ be a metric on $\ker \alpha$ and let $g$ be defined in \eqref{eq:canonical-form-metric-g}.
Notice that if 
$$
g(X,X) = f^2 \cdot \alpha(X)^2 + h(\pi(X), \pi(X)) = 0,
$$
for some $X \in \Gamma(TM)$, then $\alpha(X) = 0$ and $\pi(X) = 0$. This implies $X = 0$ and so $g$ is positive definite.
Hence, $g$ is a Riemannian metric on $M$. Equation \eqref{eq:relationship-staralpha-beta} implies that $*_g \alpha = \beta$.
\end{proof}
Proposition \ref{prop:calabi-metric-realization} will be pivotal in the gluing of our Euclidean model into a compact manifold. Namely, instead of solving $* \alpha = \beta$, it is sufficient to smoothly interpolate $\alpha$, $\beta$ and $h$. This can be accomplished using smooth cutoff functions.

\begin{proof}[Proof of Theorem \ref{theorem:calabi-insertion}]
  Consider the geodesic coordinates near $q$ in
which $P_q$ was defined.  Let $g_0$ be the Euclidean metric determined by
$g|_q$.  Use the normalization fixed above and write the asymptotic
polynomial of $u$ as $P=P_q+P_{<m}$.

Choose $R\gg1$ so that $\Sig_{\RR^n}$ and the non-Euclidean part of $g_e$
lie in $B_{R/4}(0)$.  We also require the asymptotic expansion of $u$ and the
exactness of $\star_{g_e}\d u$ to hold outside $B_{R/2}(0)$.  

\medskip
\noindent
\textbf{Rescaled model metric:} Next we will rescale the Euclidean model such that the annulus $\{x \in \RR^n\colon |x|_{g_0} \in (R/2, R) \}$ will be mapped to the annulus $A_\delta = \{x \in \RR^n \colon |x|_{g_\lambda} \in (\delta/2, \delta) \}$.
Hence, set $\lambda=\delta/R$ and consider the new metric $g_\lambda = \lambda^2 g_e$.
The geodesic coordinates $\{x_i\}$ of $g_0$ are not geodesic on $g_\lambda$, hence define $y = \lambda\: x$ and let $u_\lambda = \lambda^m u $.
The homogeneity of $P_q$ gives $\lambda^mP_q(x)=P_q(y)$ and so the leading order behaviour of $u_\lambda$ is still $P_q(y)$.

To estimate the lower order behaviour of $u_\lambda$, notice that $\nabla^j_{g_0} (u - P_q) = \mathcal{O}_{g_0}(R^{m-1-j})$ on $A_\delta$ for all $j \in \NN$. Therefore, $\nabla^j_{g_\lambda} (u_\lambda - P_q) = \mathcal{O}_{g_\lambda}(\lambda^{m-j} R^{m-1-j}) = \mathcal{O}_{g_\lambda}(\delta^{m-j} R^{-1})$ on $A_\delta$.

\medskip
\noindent
\textbf{Local estimates:}
Simultaneously, we have a $\ZZ/2$-harmonic 1-form $v = \d f_q$ on $B_{\delta}(q) \subset (M, g)$ such that $\nabla^j_g(f_q-P_q)=\mathcal{O}_g(|x|^{m+1-j})$. Because $g = g_0 + \mathcal{O}_{g_0}(|x|^2)$, $\nabla^j_{g_0}(f_q-P_q)=\mathcal{O}_{g_0}(\delta^{m+1-j})$ on the annulus $B_{\delta}(q) \setminus \overline{B_{\delta/2}(q)}$.

\medskip
\noindent
\textbf{Connected sum:}
With these estimates we now consider the connected sum of $M$ and $\RR^n$ at $q$. Explicitly, we consider the disjoint union of $M \setminus B_{\delta/2}(q) \subset (M,g)$ and $B_{\delta}(0) \subset (\RR^n,g_\lambda)$, and we identify the annulus $B_{\delta}(q) \setminus \overline{B_{\delta/2}(q)} \subset (M,g)$ with $A_\delta$ we defined before. Now set $\Sig_\delta=\Sig_M\sqcup \Sig_{\RR^n}$. 
As both line bundles are trivial on the respective annuli, we can glue them into a new flat line bundle $\I_\delta$ on $M\setminus\Sig_\delta$.

\medskip
\noindent
\textbf{Global 1-form:}
Now we interpolate the potentials and we define a global multivalued 1-form.  Choose a cutoff $\chi$ on $A_\delta$
that is $1$ near the outer boundary and
$0$ near the inner boundary, with $|\nabla^j\chi|\leq C_j\delta^{-j}$.
Set $$f_\delta=\chi f_q+(1-\chi)u_\lambda \quad \text{and} \quad v_\delta=\d f_\delta \quad \text{on } A_\delta.$$
Extend $v_\delta$ by $v$ outside
$B_\delta(q)$ and by $\d u_\lambda$ inside
$B_{\delta/2}(q)$.  The definitions agree near both boundary components, so $v_\delta$ is closed globally.

Since
$v_\delta-\d P_q=\chi\d(f_q-P_q)+(1-\chi)\d(u_\lambda-P_q)
+(f_q-u_\lambda)\d\chi$ on $A_\delta$, the estimates for $f_q-P_q$,
$u_\lambda-P_q$, and $\d\chi$ give
\[
 \nabla^j_{g_\lambda} (v_\delta-\d P_q) = \mathcal{O}_{g_\lambda} \! \left(
\delta^{m-1-j}(\delta+R^{-1})
 \right)
 \quad \text{on }A_\delta.
\]

\noindent
\textbf{Global (n-1)-form:}
Next we construct a closed $\omega_\delta \in \Omega^{n-1}(A_\delta)$ such that $\omega_\delta$ interpolates $*_{g_\lambda} \d u_\lambda$ and $*_g \d f_q$ while $v_\delta \wedge \omega_\delta > 0$ on $A_\delta$. For this consider $*_g \d f_q - *_{g_\lambda} \d u_\lambda$. With respect to the metric $g_\lambda$,
\begin{equation}
  \label{eq:dual-form-estimate-1}
  *_g \d f_q - *_{g_\lambda} \d u_\lambda = \mathcal{O}(\delta^{m-1} (\delta + R^{-1})).
\end{equation}
Viewing $A_\delta = (\delta/2, \delta) \times S^{n-1}$, denote 
\begin{equation}
  \label{eq:dual-form-estimate-2}
  *_g \d f_q - *_{g_\lambda} \d u_\lambda = \d r \wedge \sigma + \tau
\end{equation}
for some unique $\sigma, \tau \in C^\infty(A_\delta) \otimes \Omega^{\bullet}(S^{n-1})$.
Using the standard homotopy operator for differential forms \cite[Chapter~I, \S4]{BottTu1982}, one can show that
\begin{equation}
  \label{eq:dual-form-estimate-3}
  *_g \d f_q - *_{g_\lambda} \d u_\lambda = \d \left( \int_{\frac{3}{4}\delta}^r \sigma \right) + \tau|_{S^{n-1}_{3 \delta/4}},
\end{equation}
where $S^{n-1}_{3 \delta/4} = \{\frac{3}{4} \delta \} \times S^{n-1} \subset A_\delta$ is the sphere of radius $\frac{3}{4} \delta$.
We claim that $\tau|_{S^{n-1}_{3 \delta/4}}$ is exact. Indeed, the group $H_{n-1}(S^{n-1};\RR)$ is generated by a sphere,
so $\tau|_{S^{n-1}_{3 \delta/4}}$ is exact if its integral over $S^{n-1}_{3 \delta/4}$ vanishes.
Notice that $\int_{S^{n-1}_{3 \delta/4}} \tau = \int_{S^{n-1}_{3 \delta/4}} (*_g \d f_q - *_{g_\lambda} \d u_\lambda)$. The integral $\int_{S^{n-1}_{3 \delta/4}} *_{g_\lambda} \d u_\lambda$ vanishes, because the Euclidean model has zero flux. The integral $\int_{S^{n-1}_{3 \delta/4}} *_{g} \d f_q$ vanishes by Stokes' theorem. Thus, $\tau|_{S^{n-1}_{3 \delta/4}}$ is exact.

To pick a primitive of $\tau|_{S^{n-1}_{3 \delta/4}}$, let $G$ be the Greens operator of the Hodge Laplacian on the $(n-1)$-sphere of radius $1$. By the Hodge-decomposition theorem, $\tau|_{S^{n-1}_{3 \delta/4}} = \Delta G \tau|_{S^{n-1}_{3 \delta/4}} = \d \d^{\tilde{*}} G\: \tau|_{S^{n-1}_{3 \delta/4}}$, where $\tilde *$ is the Hodge dual with respect to the standard round metric. 
Let $\xi_\delta = \d^{\tilde{*}} G\: \tau|_{S^{n-1}_{3 \delta/4}}$. By taking into account the difference between the standard round metric and the metric on $S^{n-1}_{3 \delta/4}$, elliptic regularity gives
\begin{equation}
  \label{eq:dual-form-estimate-4}
\| \xi_\delta \|_{C^0(S^{n-1}_{3 \delta/4})} \le C \:\delta\: \| \tau\|_{C^0(S^{n-1}_{3 \delta/4})},
\end{equation}
for some constant $C > 0$.

In summary, we get that $\eta_\delta := \int_{\frac{3}{4}\delta}^r \sigma + \xi_\delta$ satisfies
$
\d \eta_\delta = *_g \d f_q - *_{g_\lambda} \d u_\lambda, 
$
and \eqref{eq:dual-form-estimate-1}, \eqref{eq:dual-form-estimate-2}, \eqref{eq:dual-form-estimate-3}, and \eqref{eq:dual-form-estimate-4} imply
$
\eta_\delta = \mathcal{O}_{g_\lambda}(\delta^{m} (\delta + R^{-1})).
$

Finally, define
\[
\begin{aligned}
\omega_\delta
&:=*_{g_\lambda}\d u_\lambda+\d(\chi\eta_\delta)\\
&=\chi *_g\d f_q
+(1-\chi)*_{g_\lambda}\d u_\lambda
+\d\chi\wedge\eta_\delta.
\end{aligned}
\]
The form $\omega_\delta$ is closed.  It equals $*_g v$ near the outer boundary
and $*_{g_\lambda} \d u_\lambda$ near the inner boundary of $A_\delta$. It also satisfies
$\omega_\delta-*_{g_\lambda} \d P_q = \mathcal{O}_{g_\lambda}(\delta^{m-1}(\delta+R^{-1}))$ with respect to $g_\lambda$. 

\medskip
\noindent
\textbf{Gluing conditions:}
We will apply Proposition \ref{prop:calabi-metric-realization} on $A_\delta$ using $v_\delta$ and $\omega_\delta$. Recall that on $A_\delta$, the 1-form $v_\delta$ equals $\d P_q + \mathcal{O} (\delta^{m-1}(\delta+R^{-1}))$. The matching condition gives $|\d P_q(x)|\geq c|x|^{m-1} = \mathcal{O}(\delta^{m-1})$ and so $v_\delta$ is nowhere vanishing on $A_\delta$ when $\delta$ is sufficiently small and $R$ is sufficiently large.
Similarly, $v_\delta\wedge \omega_\delta > 0$ on $A_\delta$, because its leading term is $\d P_q\wedge\star_{g_0}\d P_q=|\d P_q|_{g_0}^2 \operatorname{vol}_{g_0}$.
Finally, notice that $\ker v_\delta$ is a vector subbundle of $TA_\delta$ and that both $g$ and $g_\lambda$ induce a metric on this bundle.
Let $h_\delta$ be a smooth interpolation of these metrics on $\ker \alpha$. According to Proposition \ref{prop:calabi-metric-realization}, the metric $g_\delta := f^2 v_\delta^2 + \pi^* h_\delta$ is a Riemannian metric on $A_\delta$ such that $*_{g_\delta} v_\delta = \omega_\delta$ and $\pi^* \operatorname{vol}_{h_\delta} = f \cdot \beta$. Moreover, this metric smoothly extends to $g_\lambda$ on $B_{\delta/2}(0) \subset \RR^n$. It also extends smoothly to $g$ on $M \setminus B_{\delta}(q)$. By extending $g_\delta$ to the two complementary regions, we conclude $(v_\delta,\Sig_\delta,\I_\delta)$ is a $\ZT$-harmonic $1$-form on $(M,g_\delta)$.
\end{proof}

\subsection{Applications}
\label{subsec:zero-flux-applications}
The gluing theorem in the previous subsection leaves two tasks: First, we must construct a zero-flux Euclidean model with a suitable leading polynomial, and secondly we find a matching point for that polynomial on the closed manifold. 

In this section we start with the linear case, as this is the easier case. After this, we turn to a higher-degree construction in dimension three.

\begin{corollary}
\label{cor:linear-zero-flux-insertion}
Let $(M^n,g)$ be a closed oriented Riemannian manifold, where $n\geq 3$, and
suppose that it carries a non-zero $\ZT$-harmonic $1$-form
$(v,\Sig_M,\I_M)$.  Let $B\subset M \setminus \Sig_M$ be an open ball.  Then every
zero-flux Euclidean model whose asymptotic polynomial has degree one can be
inserted into $B$.  The new metric, $1$-form, and flat line bundle agree with
the original data on $M\setminus B$.

In particular, if $b_1(M)>0$, then this conclusion holds for every Riemannian
metric $g$ on $M$ and every open ball $B\subset M$.
\end{corollary}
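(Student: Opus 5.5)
The plan is to reduce the corollary to Theorem~\ref{theorem:calabi-insertion}: I will produce a matching point inside $B$ and then check that a degree-one model matches it.

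\textbf{Step 1: find a regular point of $v$ in $B$.} Since $B\subset M\setminus\Sig_M$ and $v$ is nonzero, unique continuation for harmonic functions applies. On $M\setminus\Sig_M$ we locally have $v=\d f$ with $f$ harmonic. The complement $M\setminus\Sig_M$ is connected, because $\Sig_M$ has codimension two. Hence $v$ cannot vanish identically on the open set $B$, and there is a point $q\in B$ with $v(q)\neq0$. At such a point the first nonzero Taylor term of the local potential is the linear polynomial $P_q(x)=\langle v(q),x\rangle$, so $\d P_q$ is a nonzero constant covector. In particular $\d P_q$ has no zeros, and $q$ is a matching point with $m=1$.

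\textbf{Step 2: check that the model matches.} Let $(u,\Sig_{\RR^n},\I_{\RR^n})$ be a zero-flux Euclidean model with asymptotic polynomial $P$ of degree one. Then $P^{(1)}(x)=\langle w,x\rangle$ for some $w\neq0$. Choose $A\in SO(n)$ mapping $v(q)/|v(q)|$ to $w/|w|$; this is possible since $n\ge3$ and $SO(n)$ acts transitively on the sphere. With $c=|w|/|v(q)|$ we get $P^{(1)}(x)=cP_q(Ax)$, which is exactly the matching condition in Definition~\ref{def:euclidean-model-matching}.

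\textbf{Step 3: insert.} Choose $\delta$ small enough that $B_\delta(q)\subset B$ and that $\delta$ lies in the admissible range of Theorem~\ref{theorem:calabi-insertion}. That theorem then gives $(g_\delta,v_\delta,\Sig_\delta,\I_\delta)$ agreeing with the original data outside $B_\delta(q)$, and hence on $M\setminus B$. It also places a rescaled copy of $\Sig_{\RR^n}$ in $B_{\delta/2}(q)$.

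\textbf{The case $b_1(M)>0$.} By Hodge theory there is a nonzero harmonic $1$-form $v$ for the given metric $g$. Take the empty branching set and the trivial line bundle $\I_M$. Then $(v,\emptyset,\I_M)$ is a nonzero $\ZT$-harmonic $1$-form: the conditions on $A$ and $B$ are vacuous when the branching set is empty. The first part of the corollary then applies to any ball $B$.

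\textbf{Main obstacle.} The only delicate point is Step 1, where one must make sure $v\not\equiv0$ on $B$. I would argue this via connectedness of $M\setminus\Sig_M$ and unique continuation for the local harmonic potentials, which needs $M$ to be connected. If $\Sig_M$ is singular, I would use only the fact that it has codimension at least two, so that it does not disconnect $M$. This is the assumption I would flag.
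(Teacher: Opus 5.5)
Your proposal is correct and follows essentially the same route as the paper: unique continuation gives a regular point $q\in B$ with a linear matching polynomial, every degree-one model matches it after a rotation and rescaling, Theorem~\ref{theorem:calabi-insertion} is then applied in a small ball $B_\delta(q)\subset B$, and the case $b_1(M)>0$ uses an ordinary harmonic $1$-form with empty branching set. One cosmetic slip: since $P_q(Ax)=\langle A^{T}v(q),x\rangle$, you need $A\in SO(n)$ with $A^{T}v(q)/|v(q)|=w/|w|$, i.e.\ $A$ should map $w/|w|$ to $v(q)/|v(q)|$ rather than the other way round.
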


\begin{proof}
Since $v$ is non-zero, unique continuation gives a regular point $q\in B$.
Because of this, the matching polynomial at $q$ is non-zero and linear.
In degree one, there is only one homogeneous linear polynomial up to scaling and rotation, and so every degree-one
zero-flux Euclidean model will match it. 
So we can apply Theorem~\ref{theorem:calabi-insertion} inside a small geodesic ball $B_\delta(q)$ around $q$.

If $b_1(M)>0$, Hodge theory gives a non-zero ordinary harmonic $1$-form for
every metric $g$.  Regarding it as a $\ZT$-harmonic $1$-form with empty
singular set and trivial line bundle, we can apply the first assertion to it.
\end{proof}

\begin{proof}[Proof of Theorem \ref{corollary:b1-positive}]
Choose a $4$-dimensional subspace $V$ of the harmonic polynomials on $\RR^n$
of degree at most one.  Since $\Sigma_0$ is connected,
Theorem~\ref{thm:flux-normalization} gives $P_s\in V$ and a small normal
deformation $\nu_s$ such that $u_{P_s,s,\nu_s}$ is nondegenerate,
$\ZT$-harmonic, and has zero flux.

We claim that the polynomial $P_s$ cannot be constant, as this would imply $\|\d u_{P_s,s,\nu_s} \|_{L^2} = 0$.
Namely, integrating by parts gives 
$$\|\d u_{P_s,s,\nu_s} \|_{L^2}^2 = \cancel{\langle u_{P_s,s,\nu_s}, \Delta u_{P_s,s,\nu_s} \rangle_{L^2}} + \int_{\partial} u_{P_s,s,\nu_s} * \d u_{P_s,s,\nu_s}.$$
If $P_s$ is constant, then the zero-flux at infinity and the order-$3/2$ vanishing along $\Sigma_{\nu_s}$ make the boundary term vanish.
This would imply $u$ is constant everywhere, which must be zero due to the monodromy condition.
As $u \not=0$, we get that $P_s$ has degree one.

Corollary~\ref{cor:linear-zero-flux-insertion} inserts this model into $B$.
The inserted singular set is a scaled copy of $\Sigma_{\nu_s}$, which is a
small normal deformation of $\Sigma_0$, and the new metric agrees with $g$ on
$M\setminus B$.
\end{proof}

If $\Sigma_0$ has $p$ connected components,
Theorem~\ref{thm:flux-normalization} requires a $4p$-dimensional polynomial
space $V$.  The harmonic polynomials of degree at most one, form an
$(n+1)$-dimensional space, so they contain such a space $V$ only when
$4p\leq n+1$.  Accommodating more components within the same construction
therefore requires higher-degree asymptotic polynomials and matching points
of correspondingly higher degree.

In dimension three, zonal harmonic polynomials provide a class of
higher-degree matching polynomials.  Let $P_m$ be the $m$-th Legendre
polynomial and set
\[
 Z_m(x,y,z)=r^mP_m(z/r),\qquad r=(x^2+y^2+z^2)^{1/2}.
\]
The polynomial $Z_m$ is homogeneous and harmonic and is invariant under
rotations about the $z$-axis.

It also satisfies the nonvanishing condition of a matching polynomial.
Indeed, suppose that $\d Z_m=0$ at a non-zero point $p \in \RR^n$. 
Because $Z_m$ is homogeneous, differentiation along $\gamma(s) = s \cdot p$ gives $\d Z_m (\dot{\gamma}(s)) = m s^{-1}Z_m(\gamma(s))$.
Hence, $\d Z_m|_p = 0$ implies $Z_m(p) = 0$. Notice that $p$ cannot lie on the $z$-axis, because on the $z$-axis $\frac{z}{r} = \pm 1$ while $P_m(\pm1)\neq0$.
Differentiating $Z_m$ gives
\begin{align*}
  \d Z_m 
  =& r^{m-2} (m \: r P_m(z/r) - z P'_m(z/r)) \d r 
  + r^{m-1} P'_m(z/r) \d z.
\end{align*}
Except on the $z$-axis, $\d r$ and $\d z$ are linearly independent 1-forms. Hence, if $\d Z_m (p) = 0$, then both
$P_m(z/r)$ and $P'_m(z/r)$ vanish at $p$. This contradicts the simplicity of the roots of $P_m$. Therefore, $Z_m$ is a matching polynomial.

Since the polynomials $Z_3,\ldots,Z_{4p+2}$ have distinct degrees, their
span $V$ has dimension $4p$. If $P\in V\setminus \{0\}$, then its top degree homogeneous term of $P$ is a nonzero, non-constant zonal harmonic and hence satisfies the matching condition.  Applying
Theorem~\ref{thm:flux-normalization} with this choice of $V$ gives the
following Euclidean models:

\begin{corollary}
\label{cor:zonal-zero-flux-model-existence}
Suppose that $\Sigma_0\subset\RR^3$ is a compact oriented codimension-two
submanifold with trivial normal bundle and $p$ connected components.  For
sufficiently large $s$, there is an integer
$m_s\in\{3,\ldots,4p+2\}$, a harmonic polynomial
$P_s\in\operatorname{span}\{Z_3,\ldots,Z_{4p+2}\}$ of degree $m_s$, and a
normal deformation $\nu_s$ of $\Sigma_0$, such that the function
$u_{P_s,s,\nu_s}$ is nondegenerate and $\ZT$-harmonic. Moreover, $u_{P_s,s,\nu_s}$ has zero flux, and is
asymptotic to $P_s$ at infinity.  The highest-degree homogeneous term of
$P_s$ is a nonzero multiple of $Z_{m_s}$.
\end{corollary}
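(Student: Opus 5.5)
The plan is to apply Theorem~\ref{thm:flux-normalization} with $V:=\operatorname{span}\{Z_3,\ldots,Z_{4p+2}\}$ and then analyse the resulting polynomial. The polynomials $Z_m$ are homogeneous of pairwise distinct degrees and nonzero, so they are linearly independent and $\dim V=4p$. This is exactly the dimension required in Proposition~\ref{prop:phase-family-of-zonal-polynomials} and Lemma~\ref{lem:generic-surjectivity-coefficient-map}. Fixing a generic transition metric as in Section~\ref{sec:limit-coefficients-metric}, Theorem~\ref{thm:flux-normalization} then gives, for every $s\gg1$, a phase $\theta_s\in S^1$, a polynomial $P_s:=P_{s,\theta_s}\in V$ and a small normal deformation $\nu_s$. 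For these, $u_{P_s,s,\nu_s}$ is nondegenerate, $\ZT$-harmonic and has zero flux. By construction (Proposition~\ref{prop:existence-harmonic-sections-with-polynomial-growth} applied to the moved singular set), $u_{P_s,s,\nu_s}=P_s+O(r_\infty^{\delta_*})$ on $\mathcal A_\infty$, so it is asymptotic to $P_s$.

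Next we show $P_s\neq0$. If $P_s=0$, then $u:=u_{P_s,s,\nu_s}$ decays like $r_\infty^{2-n}$ at infinity. We integrate $|\d u|^2$ by parts over $X_s$ minus a small tube around $\Sigma_{\nu_s}$ and a large ball. The boundary term at infinity is $O(R^{2-n}\cdot R^{1-n}\cdot R^{n-1})\to0$. Near $\Sigma_{\nu_s}$ we have $A=0$, so $u=O(r^{3/2})$ and $\d u=O(r^{1/2})$, and this boundary term vanishes as the tube shrinks. Hence $\d u=0$, so $u$ is constant, and the monodromy $-1$ forces $u=0$. This contradicts nondegeneracy, since $B\neq0$. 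The alternative route through Proposition~\ref{prop:phase-family-of-zonal-polynomials}, where $B^{\mathrm{av}}=\theta\neq0$, also yields $P_{s,\theta}\neq0$ directly, which is simpler and avoids the integration by parts; we would use that.

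Finally, write $P_s=\sum_{m=3}^{4p+2}a_mZ_m$ with not all $a_m$ zero, and set $m_s:=\max\{m:a_m\neq0\}\in\{3,\ldots,4p+2\}$. Since each $Z_m$ is homogeneous of degree $m$, the highest-degree homogeneous term of $P_s$ is $a_{m_s}Z_{m_s}$, a nonzero multiple of $Z_{m_s}$, and $\deg P_s=m_s$. This gives every assertion of the corollary.

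The only substantive input is Theorem~\ref{thm:flux-normalization}, and the main point to check is that its hypotheses hold for this particular $V$. Lemma~\ref{lem:generic-surjectivity-coefficient-map} is stated for an arbitrary $4p$-dimensional subspace of $\mathcal H_{\mathrm{poly}}(\RR^3)$, so nothing special about zonal polynomials is needed. One must also ensure that the choice of $\chi_\infty$ and the norm $\|\cdot\|_V$ on the finite-dimensional space $V$ cause no issue. This is automatic, since all norms on $V$ are equivalent and $\|P\|_{\mathcal D^{1,\beta}(\operatorname{Supp}(\d\chi_\infty))}$ is a norm on $V$: a harmonic polynomial vanishing on an open set is zero.
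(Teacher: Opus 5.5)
Your proposal is correct, and its skeleton is the paper's. You take $V=\operatorname{span}\{Z_3,\ldots,Z_{4p+2}\}$ (dimension $4p$ since the degrees are distinct), apply Theorem~\ref{thm:flux-normalization}, and let $m_s$ be the largest index with nonzero coefficient.

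The one divergence is in the step the paper calls the only non-obvious one, namely $P_s\neq0$. The paper reuses the energy argument from the proof of Theorem~\ref{corollary:b1-positive}: integration by parts, with the boundary terms killed by zero flux at infinity and by the $O(r^{3/2})$ vanishing along $\Sigma_{\nu_s}$. That is your first route.

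Your preferred route is also valid and more elementary. You observe that $P_{s,\theta}=e^{3s/2}\Gamma_s^{-1}(0,\theta,\ldots,0,\theta)$ is nonzero because $\Gamma_s$ is invertible; equivalently, linearity of $P\mapsto u_{P,s}$ would force $B^{\mathrm{av}}_{P,s,a}=0\neq\theta$ if $P=0$. This uses neither the Nash--Moser correction nor the choice of $\theta_s$. It suffices here only because $V$ contains no nonzero constants, so $P_s\neq0$ already gives a top-degree term $a_{m_s}Z_{m_s}$ with $m_s\geq3$.

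The paper's energy argument is the more robust one. In the degree-one setting of Theorem~\ref{corollary:b1-positive}, $V$ may contain constants. A nonzero constant $P_s$ cannot be excluded by invertibility of $\Gamma_s$, only by the zero-flux energy identity.
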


The only non-obvious part of the proof of this corollary is that $P_s \not = 0$.
However, this follows from the same argument in the proof of Theorem \ref{corollary:b1-positive} that $P_s$ cannot be constant.
 

This completes the Euclidean part of the construction.  The compact-side
input is provided by the following result.

\begin{proposition}[{\protect\cite[Theorem~4.5]{HeChenYan}}]
\label{prop:zonal-matching-point-preparation}
Let $(v,\Sig_M,\I_M)$ be a $\ZT$-harmonic $1$-form on $(M^3,g)$, and let $q$ be a regular point of $v$.  Fix
$m\geq3$ and an open ball $B$ about $q$.  There is a metric $g'$ and a
$\ZT$-harmonic $1$-form $(v',\Sig_M,\I_M)$ on $(M,g')$ such that $v'=v$
on $M\setminus B$ and $q$ is a matching point for $v'$ with matching
polynomial $Z_m$.
\end{proposition}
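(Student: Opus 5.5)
The plan is to construct $v'$ by a local Calabi-type modification of $v$ on $B$, in the same spirit as the proof of Theorem~\ref{theorem:calabi-insertion}, but with a purely smooth harmonic model in place of a branched one. Choose a trivialization of $\I_M$ on a small geodesic ball $B_\delta(q)\subset B$ disjoint from $\Sig_M$, and write $v=\d f_q$ with $f_q(q)=0$. Since $q$ is regular, after a rotation and rescaling $f_q = x_3 + O(|x|^2)$ in geodesic coordinates.

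The core construction is an intermediate closed $1$-form. Inside $B_{\delta/2}(q)$, replace $f_q$ by the model function $F=Z_m+\lambda x_3$, with $\lambda$ and the inner scale chosen so that on a thin annulus $A=B_{\delta}\setminus B_{\delta/2}$ the linear term dominates $\d Z_m$. On $A$, interpolate $f=\chi f_q+(1-\chi)F$. Then $\d f$ is nowhere zero on $A$, since both $\d f_q$ and $\d F$ are close to $\d x_3$ there when $\delta^{m-1}\ll\lambda$. Next, shrink the linear term inward: let $\lambda$ be replaced by $\lambda\rho(|x|)$ with $\rho$ vanishing near $0$. This produces $f=Z_m+\lambda\rho x_3$ near $q$, whose first nonzero Taylor term is $Z_m$.

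The main obstacle is that the closed form $\d f$ must be nonvanishing on all of $B_\delta\setminus\{q\}$. This is required so that Proposition~\ref{prop:calabi-metric-realization} applies with some closed $(n-1)$-form $\omega$ satisfying $\d f\wedge\omega>0$. In the region where $\rho$ transitions, $\d(Z_m+\lambda\rho x_3)$ could vanish. To handle this, I would instead interpolate along the one-parameter family $Z_m+t x_3$ only on shells where, by homogeneity, $|\d Z_m|\ge c|x|^{m-1}$, and exploit that $Z_m$ is a matching polynomial. Concretely, I would choose the transition for $\rho$ on a shell where $\lambda\ll|x|^{m-1}$, so that $\d Z_m$ dominates there. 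The linear term then dominates on the outer shell, and $\d Z_m$ dominates on the inner shell. In between, the form stays nonvanishing provided $\d Z_m$ and $\d x_3$ are never antiparallel with comparable magnitudes. If this check fails, I would first try replacing $x_3$ by a rotated linear function or using a sign choice of $Z_m$; failing that, I would use a genericity argument in three dimensions.

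With nonvanishing $\alpha=\d f$ in hand, build $\omega$. Outside $B_\delta$, take $\omega=*_g v$; near $q$, take $\omega=*_{g_0}\d Z_m$; on the interpolation regions, glue using $\omega=\omega_1+\d(\chi\eta)$ as before. The cohomological obstruction vanishes here because both forms are co-closed and smooth on balls, so their fluxes through spheres are zero by Stokes. Positivity $\alpha\wedge\omega>0$ follows from the leading-term estimates. Proposition~\ref{prop:calabi-metric-realization} then gives $g'$ with $*_{g'}v'=\omega$, so $v'$ is harmonic for $g'$ and equals $v$ outside $B$. Finally, $q$ is a matching point with polynomial $Z_m$, taken with respect to the metric $g'(q)=g_0$ arranged near $q$.
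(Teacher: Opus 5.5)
There is a genuine gap, and for even $m$ it is fatal. The step you yourself identify as the main obstacle, keeping $\d f$ nowhere zero on $B_\delta\setminus\{q\}$, is topologically impossible in that case.

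Trivialize $T^*M$ over $\bar B_\delta(q)$. A nowhere-vanishing $1$-form on the closed annulus between a small sphere $S_r$ about $q$ and $\partial B_\delta$ gives homotopic maps $S^2\to\RR^3\setminus\{0\}$ on the two boundary spheres, so their degrees agree.
\begin{itemize}
\item On $\partial B_\delta$ the form is $\d f_q$, which is nonvanishing on all of $\bar B_\delta$, so the degree is $0$.
\item On $S_r$ the form is $\d Z_m$, and its degree is the index of the isolated zero of $\d Z_m$ at $0$. For a homogeneous $F$ with $\d F\neq0$ off the origin, this index is $1-\chi(\{F<0\}\cap S^2)$, because $\{F\le-\epsilon\}\cap\bar B_1$ is homotopy equivalent to $\{F<0\}\cap S^2$.
\end{itemize}
The $m$ simple zeros of $P_m$ cut $S^2$ into two polar caps and $m-1$ bands of alternating sign, with the north cap positive. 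For even $m$ the negative set consists only of bands, so the index is $1$; for $m=2$ this is just $\operatorname{sign}\det\operatorname{diag}(-1,-1,2)=1$. For odd $m$ the south cap is negative and the index is $0$.

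Hence, for even $m$, every closed $1$-form that equals $v$ outside a small ball on which $v\neq0$, and has first Taylor term $Z_m$ at $q$, must have additional zeros in that ball. When those zeros are isolated their total index is $-1$. (The remainder is $O(|x|^m)$ while $|\d Z_m|\ge c|x|^{m-1}$, so the index at $q$ is that of $Z_m$.) Degree is stable under perturbation, so no cutoff profile, rotation of $x_3$, sign change of $Z_m$, or genericity argument can remove these zeros.

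For odd $m$ the degree obstruction vanishes, but the construction you describe still fails.
\begin{itemize}
\item Inside $B_\delta$ you have $|\d Z_m|\lesssim\delta^{m-1}\ll\lambda$, so no shell with $\lambda\ll|x|^{m-1}$ exists. As $\lambda\rho$ decreases from $\lambda$ to $0$ it must cross $|\d Z_m|$, so a shell of comparable magnitudes is unavoidable.
\item Zeros do appear there. For $m=3$, with the linear term along the symmetry axis, on the plane $x_3=0$ one has $\d Z_3=-\tfrac32(x_1^2+x_2^2)\,\d x_3$ and $x_3\rho'=0$. So $\d f=\bigl(\lambda\rho-\tfrac32|x|^2\bigr)\d x_3$ vanishes on a whole circle.
\item Rotations and sign changes do not help. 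The direction map of $\d Z_3$ on $S^2$ is onto, so antiparallel configurations with any linear form occur, and your stated criterion is never met.
\item Positivity of $\alpha\wedge\omega$ does not follow from leading-term estimates. In Theorem~\ref{theorem:calabi-insertion} both sides are perturbations of the same $\d P_q$. Your interpolation instead passes through a region where $\alpha$ is close to neither $\d x_3$ nor $\d Z_m$. Producing a closed $2$-form positive on $\alpha$ there is a genuine Calabi-type problem that needs its own argument.
\end{itemize}

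What is needed, at least for even $m$, is to let $v'$ acquire extra zeros in $B$ of saddle type, chosen to balance the index at $q$. These should be nondegenerate of Morse index $1$ or $2$, the only nondegenerate zeros at which a function can be harmonic for some metric. Around them and around $q$ one then uses local harmonic models, and builds the closed $2$-form on the complement by a Calabi-type argument. The paper does not reprove this statement; it quotes it from \cite[Theorem~4.5]{HeChenYan}. By the index count above, that construction necessarily produces such additional zeros when $m$ is even.
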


The preceding corollary and proposition provide the two inputs to the gluing
theorem.

\begin{corollary}
\label{cor:zonal-zero-flux-insertion}
Let $(M^3,g)$ be a closed oriented Riemannian manifold, and suppose that it
carries a nonzero $\ZT$-harmonic $1$-form $(v,\Sig_M,\I_M)$.  Then every
zero-flux Euclidean model $u_{P_s,s,\nu_s}$ furnished by
Corollary~\ref{cor:zonal-zero-flux-model-existence} can be inserted in any
prescribed open ball $B$ about any regular point $q$ of $v$.  The resulting
$1$-form agrees with $v$ on $M\setminus B$, and its singular set is obtained
from $\Sig_M$ by adjoining a scaled copy of $\Sigma_{\nu_s}$ that collapses
to $q$ as the insertion scale tends to zero.

In particular, if $b_1(M)>0$, the initial metric $g$ may be chosen
arbitrarily.
\end{corollary}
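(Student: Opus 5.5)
The plan is to combine Proposition~\ref{prop:zonal-matching-point-preparation} with Theorem~\ref{theorem:calabi-insertion}, with all local modifications confined to $B$.

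\textbf{Step 1: prepare a matching point.} Let $u_{P_s,s,\nu_s}$ be a model furnished by Corollary~\ref{cor:zonal-zero-flux-model-existence}. Its asymptotic polynomial $P_s$ has top homogeneous term $cZ_{m_s}$ with $c\neq0$ and $m_s\ge3$. Given the ball $B$ about the regular point $q$, apply Proposition~\ref{prop:zonal-matching-point-preparation} with $m=m_s$ and a smaller ball $B'\Subset B$ about $q$. This yields a metric $g'$ and a $\ZT$-harmonic $1$-form $(v',\Sig_M,\I_M)$ with $v'=v$ off $B'$. Moreover, $q$ is now a matching point whose matching polynomial is $Z_{m_s}$.

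\textbf{Step 2: check the matching.} By Definition~\ref{def:euclidean-model-matching}, the model matches $P_q=Z_{m_s}$. Indeed, $P_s$ has degree $m_s$ and its top term equals $cZ_{m_s}(Ax)$ with $A=\mathrm{Id}$. The model also has zero flux by Corollary~\ref{cor:zonal-zero-flux-model-existence}. There is a subtlety: Proposition~\ref{prop:zonal-matching-point-preparation} gives $P_q$ in oriented orthonormal geodesic coordinates for $g'$. Its identification with $Z_{m_s}$ holds only up to $SO(3)$ and sign, and this is exactly the freedom allowed in the definition of matching.

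\textbf{Step 3: insert the model.} Apply Theorem~\ref{theorem:calabi-insertion} to $(M,g',v')$ at $q$ with $\delta$ small enough that $B_\delta(q)\subset B'$ with respect to $g'$. This gives $(g_\delta,v_\delta,\Sig_\delta,\I_\delta)$ that agrees with $(g',v',\I_M)$ outside $B_\delta(q)$, and hence with the original data outside $B$. The singular set satisfies $\Sig_\delta\setminus B_\delta(q)=\Sig_M$. Inside $B_{\delta/2}(q)$ it is a copy of $\Sigma_{\nu_s}$ rescaled by $\lambda=\delta/R$. As $\delta\to0$ this copy collapses to $q$.

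\textbf{Step 4: the case $b_1(M)>0$.} For an arbitrary metric $g$, Hodge theory gives a nonzero harmonic $1$-form. Regard it as a $\ZT$-harmonic form with empty singular set. Unique continuation gives a regular point in any ball, and Steps 1--3 then apply.

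The main point requiring care is Step 1's compatibility: Proposition~\ref{prop:zonal-matching-point-preparation} modifies the metric, and we must ensure that the resulting $q$ still lies off $\Sig_M$ with $B'$ disjoint from $\Sig_M$. The proposition keeps $\Sig_M$ unchanged, and shrinking $B'$ away from $\Sig_M$ handles this. The remaining risk is only notational: the zonal axis must agree after rotation, which Definition~\ref{def:euclidean-model-matching} permits.
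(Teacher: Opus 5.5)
Your proposal is correct and follows the paper's route exactly. The model from Corollary~\ref{cor:zonal-zero-flux-model-existence} and the matching-point preparation of Proposition~\ref{prop:zonal-matching-point-preparation} with $m=m_s$ are the two inputs to Theorem~\ref{theorem:calabi-insertion}, and the $b_1(M)>0$ case uses an ordinary harmonic $1$-form as in Corollary~\ref{cor:linear-zero-flux-insertion}. One small correction to Step~3: Proposition~\ref{prop:zonal-matching-point-preparation} gives no relation between $g'$ and $g$ outside $B$, as the remark after the corollary stresses. So only the $1$-form, line bundle and singular set agree with the original data off $B$, not the metric---which is all the corollary actually claims.
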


\begin{remark}
Unlike the degree-one insertion in
Corollary~\ref{cor:linear-zero-flux-insertion}, the matching-point preparation
used above gives $v'=v$ on $M\setminus B$, but provides no corresponding
relation between $g'$ and $g$ there.  Consequently, the newly inserted
singular set and the change in the $1$-form are confined to $B$, while the
final metric is not controlled outside $B$.
\end{remark}

\bibliographystyle{alpha}
	\bibliography{references}
\end{document}